\newtheorem{thm}{Theorem}
\newtheorem{lem}[thm]{Lemma}
\newtheorem{prop}[thm]{Proposition}
\newtheorem{defn}[thm]{Definition}
\newtheorem{notation}[thm]{Notation}
\begin{document}

\newfont{\goth}{eufm10 scaled \magstep1}
\def\ga{\mbox{\goth a}}
\def\gp{\mbox{\goth p}}
\def\gc{\mbox{\goth c}}
\def\gg{\mbox{\goth g}}
\def\ge{\mbox{\goth e}}
\def\gh{{\mbox{\goth h}}}
\def\gk{\mbox{\goth k}}
\def\gl{\mbox{\goth l}}
\def\gf{\mbox{\goth f}}
\def\gm{\mbox{\goth m}}
\def\gn{\mbox{\goth n}}
\def\gq{\mbox{\goth q}}
\def\gr{\mbox{\goth r}}
\def\gs{\mbox{\goth s}}
\def\gt{\mbox{\goth t}}
\def\gu{\mbox{\goth u}}
\def\gw{\mbox{\goth w}}
\def\gsl{\mbox{\goth sl}}
\def\gsp{\mbox{\goth sp}}
\def\r{\mbox{\goth r}}
\def\gso{\mbox{\goth so}}
\def\gsu{\mbox{\goth su}}
\newcommand\Id{\mathrm{Id}}
\newcommand\Ad{\mathrm{Ad}}
\newcommand\ad{\mathrm{ad}}
\def\ra{\rightarrow}
\def\ggl{\mbox{\goth gl}}

\author{Dmitri Alekseevsky and Liana David}

\title{Prolongation of Tanaka structures: an alternative approach}

\maketitle

Dmitri V. Alekseevsky: {\it Institute for Information Transmission
Problems},\\ B. Karetny per. 19, 127051, Moscow (Russia) and\\
{\it  University of Hradec Kr\'{a}lov\'{e}},\\
Rokitanskeho 62, Hradec Kr\'{a}lov\'{e}  50003 (Czech Republic),
dalekseevsky@iitp.ru\\

Liana  David\footnote{Corresponding Author}: {\it ``Simion Stoilow'' Institute of the Romanian
Academy},\\
Research Unit 7, P.O. Box 1-764, Bucharest, (Romania); tel. 0040-21-31965065, fax. 0040-21-3196505,
e-mail: liana.david@imar.ro\\

\begin{center}

{\it Dedicated to the 150th anniversary of the Romanian Academy}
\end{center}

{\bf Abstract:}
The classical theory of prolongation of $G$-structures was generalized by N. Tanaka
to  a  wide   class  of  geometric structures  (Tanaka  structures), which are defined on a non-holonomic distribution.
Examples of Tanaka structures include  subriemannian,  subconformal, CR  structures, structures  associated to second order differential  equations and structures defined
by gradings of Lie  algebras (in the framework of parabolic geometries).
Tanaka's  prolongation procedure associates  to a Tanaka  structure  of  finite  order  a manifold  with an absolute parallelism. It is a very fruitful method
for the  description of local invariants, investigation  of  the   automorphism  group  and
equivalence problem. In  this paper,
we  develop  an  alternative  constructive  approach  for  Tanaka's prolongation procedure, based  on  the theory  of quasi-gradations
of filtered vector spaces, $G$-structures and
their torsion functions.\\

{\bf Key words:} $G$-structures, Tanaka structures, prolongations, automorphism groups, quasi-gradations,
torsion functions.\\

{\bf MSC Classification:} 53C10, 53C15, 53B15.\\

D.V.A. carried this work at  IITP  and  is supported by an RNF grant (project
n.14-50-00150).
L.D. carried this work during a
Humboldt Research Fellowship at the University of Hamburg.
She thanks  University of Hamburg for hospitality  and
the Alexander von Humboldt-Stiftung for financial support.
Partial financial support from  the CNCS-UEFISCDU grant
PN-II-ID-PCE-2011-3-0362 is also acknowledged.

\newpage

\section{Introduction}

Recall   that a $G$-structure of  an $n$-dimensional manifold  $M$  is  a principal    subbundle  $\pi_G : P_G \to M $ of  the   frame  bundle
of $M$ with structure  group $G \subset GL(V),\, V = \mathbb{R}^n$. Any  tensor  field which  is infinitesimally  homogeneous,
i.e. whose  value   at  any point has  the  same normal  form  with  respect  to   some "admissible" frame,
is identified  with  a $G$-structure,  whose total space  $P_G$ is  the set of all such admissible  frames. \\
The prolongation of $G$-structures (see e.g. \cite{sternberg},
Chapter VII) is a powerful  method  in differential geometry
which associates  to any $G$-structure $ \pi_G : P_G \to M $
of finite order  a new  manifold $ P= P(\pi_G )$ (the  full prolongation), with an
absolute parallelism (i.e. an $\{e\}$-structure),
with the important property that the group of automorphisms $\mathrm{Aut}(P,\{ e\} )$ of
$(P, \{ e\})$  is isomorphic to
the group of automorphisms
$\mathrm{Aut}(\pi_{G})$  of $\pi_{G}$. The absolute paralellism $(P, \{ e\})$
provides local invariants for
$\pi_{G}$  (see \cite{sternberg}, Theorem 4.1 of Chapter
VII). Owing to Kobayashi's theorem (see \cite{kobayashi},
Theorem 3.2 of Chapter 0),   $\mathrm{Aut}(\pi_{G}) \simeq \mathrm{Aut}(P,\{ e\} )$
are Lie  groups of dimension less or equal to the dimension of $P$.\

The   full  prolongation $P$
of $\pi_{G}: P_{G} \ra M$ is  defined by consecutive applications  of  the first prolongation. We briefly recall its construction.
It is  based on the observation  that the bundle $j^1(\pi_G) :  J^1 P_{G} = \mathrm{Hor}(P_{G}) \to  P_{G}$ of  1-jets  of  sections
of $\pi_{G}$ (i.e. horizontal  subspaces  of $TP_G$)
is a $G$-structure with structure group $$ G^1 = \mathrm{id} + \mathrm{Hom}(V, \mathfrak{g}) =
 \{
 \begin{pmatrix}
  \mathrm{id} & 0 \\
  A_{.} & \mathrm{id}\\
\end{pmatrix}
 ,\,\,  A_{.} \in \mathrm{Hom}(V, \mathfrak{g}) \}, $$
which is a  commutative subgroup  of  $GL(V + \mathfrak{g})$.
Using  the torsion functions of $j^{1} (\pi_{G})$, one  can  reduce
the $G$-structure $j^{1} (\pi_{G})$
to a $G$-structure $\pi^{(1)}_{G}: P_G^{(1)} \to  P_G$ whose structure group $G^{(1)}$
is  the  Lie subgroup of $G^{1}$ generated  by the Lie
subalgebra  $\mathfrak{g}^{(1)} = \mathrm{Hom}(V, \mathfrak{g}) \cap (V \otimes S^2V^*)  \subset \mathfrak{gl}(V + \mathfrak{g}). $
The $G$-structure $\pi^{(1)}_{G}$ is called the first prolongation of $\pi_{G}.$
If the  $k$-th iterated  prolongation $\mathfrak{g}^{(k)} :=( \mathfrak{g}^{(k-1)})^{(1)}$ of the Lie  algebra $\mathfrak{g}= \mathrm{Lie} (G)$  vanishes,
then  $G$ is  called  of   finite  order  and  the  $k$-th iterated $P^{(k)}_{G}$
first prolongation of $P_G$ defines  an absolute parallelism on  the
full prolongation    $P:= P_G^{(k-1)} $.\

While the prolongation procedure works effectively for
$G$-structures of finite order (e.g. conformal or quaternionic structures),
there are other important geometric structures (e.g.
CR-structures  and other  structures   defined on a non-integrable  distribution),
which cannot be treated effectively by this method.
To overcome this difficulty, in 1970  Tanaka \cite{tanaka}
generalized the prolongation of $G$-structures to  a larger class
of geometric structures, called Tanaka structures in
\cite{dmitri-spiro} and infinitesimal flag structures in
\cite{cap-slovak} (see Definition \ref{def-tanaka-gen}).
Examples  of Tanaka structures include CR-structures, subriemannian
and subconformal structures.  Tanaka's prolongation
procedure received much attention in the mathematical literature.
There are many approaches for the Tanaka prolongation  under  different  assumptions,
see \cite{morimoto2, caps, dmitri-spiro, zelenko, cap-slovak}.
Our approach is a  developing and a  detalization of  the approach from \cite{dmitri-spiro}, where
the  first  step  of the Tanaka prolongation was  explained  in  detail, but the
other  steps  were  only stated without proofs. To  prove  the  iterative   construction,  one  has  to
check many extra  conditions,  and this will be carefully  done  in this paper.
Our approach  is  close to the approach of
I. Zelenko \cite{zelenko}. The main  difference is   that  we  develop   and   systematically use the  theory of
quasi-gradations of   filtered vector  spaces. Together  with the well-known theory of Tanaka prolongations
of non-positively graded Lie  algebras and the   torsion functions of $G$-structures, this provides
a conceptual and  simple  description
of each step of the prolongation procedure: the   principal  bundle
$\bar{\pi}^{(n)}: \bar{P}^{(n)}\ra \bar{P}^{(n-1)}$ which relates the $n$ and $(n-1)$-prolongations of a given Tanaka structure
is canonically isomorphic to a subbundle of the principal bundle of
$(n+1)$-quasi-gradations of  $T\bar{P}^{(n-1)}$
and is obtained as
the quotient of a $G$-structure of $\bar{P}^{(n-1)}$, with structure
group $G^{n} GL_{n+1} (\gm_{n-1})$, with  suitable properties of the
torsion function. These statements are explained in detail in
Theorem \ref{sf0}. In Theorem \ref{sf} we state the final result of the Tanaka prolongation procedure,
which reduces the local classification of Tanaka structures of finite order to the well understood local classification of
absolute parallelisms.  This requires
the construction of a canonical frame on a prolongation of suitable order and
a careful analysis of the behaviour of the automorphisms of a Tanaka structure under the prolongation procedure.
We do this in Propositions \ref{ad-prop} and Proposition \ref{ad-prop-1}.
In the remaining part of the
introduction we present the structure of the paper.\\

{\bf Structure of the paper.} Section \ref{algebraic-sect} is
mainly intended to fix notation. Our original
contribution in this section is the theory of quasi-gradations of
filtred vector spaces, which is developed in Subsections \ref{quasi-gradations} and
\ref{lifts-sect}. Besides, we  recall the definition of the Tanaka
prolongation of a non-positively graded Lie algebra \cite{tanaka},
the basic facts we need  from the theory of $G$-structures (see e.g. \cite{sternberg}) and the definition of
Tanaka structure \cite{dmitri-spiro}.

In Section \ref{statements} we state our main results from this
paper, namely Theorems \ref{sf0} and \ref{sf}. All notions used
in these statements are defined in the previous section.

The remaining sections are devoted to the proofs of
Theorems \ref{sf0} and \ref{sf}. Let $(\mathcal D_{i} , \pi_{G} :
P_{G} \ra M)$ be a Tanaka $G$-structure of type $\gm  = \sum_{i=-k}^{-1} \gm^{i}.$ Basically,
the proof of Theorem \ref{sf0} is divided into two main parts: in
a first stage, in Section \ref{1st-prol-sect} we construct the
starting projection $\bar{\pi}^{(1)} : \bar{P}^{(1)} \ra P =
P_{G}$ of the sequence of projections from Theorem \ref{sf0} (also
called the first prolongation of the Tanaka structure $(\mathcal
D_{i} , \pi_{G})$).
For this, we remark that $P$ has a canonical Tanaka $\{ e\}$-structure of type $\gm_{0} = \gm + \gg^{0}$
(where $\gg^{0} =
\mathrm{Lie}(G)$) and we define a  $G$-structure
$\pi^{1} : P^{1} \ra P$ as the set of all
adapted gradations of $TP$, or, equivalently,  the set of all
frames of $TP$ which lift the canonical graded frames of the Tanaka $\{ e\}$-structure of $P$
(see Proposition \ref{hat} and Definition \ref{pi-1}).
Using the torsion, we reduce $\pi^{1}$ to a subbundle $\tilde{\pi}^{1}: \tilde{P}^{1} \ra P$,
with structure group $G^{1}GL_{2} (\gm_{1})$ and we define
$\bar{\pi}^{(1)} : \bar{P}^{(1)} \ra P =
P_{G}$ to be the quotient of $\tilde{\pi}^{1}$  by the normal subgroup
$GL_{2}(\gm_{1})$  (see
Definition \ref{first-prol-def}). To a large extent
(except Subsection \ref{act-sub-1})
this material
is a rewriting of the construction from \cite{dmitri-spiro}, using frames instead of coframes (which
are more suitable for the higher steps of the prolongation).
It is also the simplest part of the prolongation procedure. We skip its details in this introduction and we describe directly
the higher steps of the prolongation, where our new approach using quasi-gradations
plays a crucial role. Therefore, suppose that the projections
$\bar{\pi}^{(i)}:\bar{P}^{(i)} \ra \bar{P}^{(i-1)}$ ($i\leq n$)
from Theorem  \ref{sf0} are given. We aim to define
$\bar{\pi}^{(n+1)}:\bar{P}^{(n+1)} \ra \bar{P}^{(n)}.$

In Section \ref{pi-n} we define $P^{n+1} \subset \mathrm{Gr}(
T\bar{P}^{(n)})$ as the set of all adapted gradations of
$T_{\bar{H}^{n}} \bar{P}^{(n)}$ (for any $\bar{H}^{n}\in
\bar{P}^{(n)}$), whose projection to
$T_{\bar{H}^{n-1}}\bar{P}^{(n-1)}$ is compatible with the
quasi-gradation $\bar{H}^{n} \in \mathrm{Gr}_{n+1}
(T_{\bar{H}^{n-1}}\bar{P}^{(n-1)})$ (see
Definition \ref{pi-n-t})
and we  show that the natural map ${\pi}^{n+1}: P^{n+1} \ra
\bar{P}^{(n)}$  is a $G$-structure, with structure group $\mathrm{Id} + \mathfrak{gl}_{n+1} (\gm_{n}) +
\mathrm{Hom}(\sum_{i=0}^{n-1} \gg^{i}, \gm_{n})$ (see Proposition
\ref{pi-2}).

The definition of $\bar{\pi}^{(n+1)}$  requires
a careful analysis of the torsion functions of the $G$-structure
${\pi}^{n+1}.$ This is done in Sections \ref{torsion-g-n}
and \ref{variation-g-n}. In Section \ref{torsion-g-n}  we consider
an arbitrary connection $\rho$ on the $G$-structure $\pi^{n+1}:
P^{n+1} \ra \bar{P}^{(n)}$ and we
study the component   $t^{\rho}: P^{n+1} \ra \mathrm{Hom} ( (\gm^{-1} + \gg^{n})\wedge \gm_{n},\gm_{n}) $
of its torsion function
(see Theorem
\ref{main-t}).
The proof of Theorem \ref{main-t} is divided into three parts, according to the
decomposition of  $\mathrm{Hom} ( (\gm^{-1} + \gg^{n})\wedge \gm_{n},\gm_{n})$
into the subspaces
$\mathrm{Hom}( \gg^{n}\wedge\gm_{n}, \gm_{n})$,
$\mathrm{Hom} (\gm^{-1}\wedge \gm ,
\gm_{n})$ and
$\mathrm{Hom}(\gm^{-1}\wedge \sum_{i=0}^{n-1}
\gg^{i},\gm_{n})$.
In Section \ref{action-g-n}
we  define an action of $G^{n} GL_{n+1}
(\gm_{n-1})$ on ${P}^{n+1}$
(see Proposition \ref{cheie})
which is used to treat
the  $\mathrm{Hom}( \gg^{n}\wedge\gm_{n}, \gm_{n})$-valued component
of $t^{\rho}$ (see Proposition \ref{part3}). The properties of the $\mathrm{Hom} (\gm^{-1}\wedge \gm ,
\gm_{n})$-valued component of $t^{\rho}$ are consequences of the fact that
the canonical
graded frames of the Tanaka $\{ e\}$-structure on $\bar{P}^{(n)}$
are Lie algebra isomorphisms when restricted to $\gm$ (see Proposition \ref{part1}). The properties of the
remaining $\mathrm{Hom}(\gm^{-1}\wedge \sum_{i=0}^{n-1}
\gg^{i},\gm_{n})$-valued component  of $t^{\rho}$
are inherited from
the properties of the torsion
function of the $G$-structure $\tilde{\pi}^{n}: \tilde{P}^{n} \ra
\bar{P}^{(n-1)}$ (see Proposition \ref{part2}).

In Section \ref{variation-g-n} we determine the homogeneous
components of  $t^{\rho}$  which are independent of the
connection $\rho$ and we define and study  the $(n+1)$-torsion
$\bar{t}^{(n+1)} : P^{n+1} \ra \mathrm{Tor}^{n+1} (\gm_{n})$
of the Tanaka structure $(\mathcal D_{i}, \pi_{G})$ (see Definition \ref{torsion-n-def} and Theorem \ref{modificare-tors}).

With the material from the previous sections,
in Section \ref{prol-sect} we finally define  the $G$-structure $\tilde{\pi}^{n+1} : \tilde{P}^{n+1} \ra \bar{P}^{(n)}$
and the principal bundle  $\bar{\pi}^{(n+1)}: \bar{P}^{(n+1)} \ra \bar{P}^{(n)}$ we are
looking for. Let $W^{n+1}$ be a complement of $\mathrm{Im}(\partial^{(n+1)})$ in the space
of torsions $\mathrm{Tor}^{n+1}(\gm_{n})$ (see Theorem \ref{modificare-tors} for the definition of
the map $\partial^{(n+1)}$). The $G$-structure
$\tilde{\pi}^{n+1}$ is
the restriction of $\pi^{n+1}$ to $\tilde{P}^{n+1} = (\bar{t}^{(n+1)})^{-1}(W^{n+1})$
and has structure group $G^{n+1} GL_{n+2}
(\gm_{n})$ (see Proposition \ref{tilde-n}). The bundle
$\bar{\pi}^{(n+1)}: \bar{P}^{(n+1)}\ra \bar{P}^{(n)}$ is defined
as the quotient of $\tilde{\pi}^{n+1}$  by the normal subgroup
$GL_{n+2}(\gm_{n})\subset G^{n+1}GL_{n+2} (\gm_{n})$ and satisfies the properties from
Theorem \ref{sf0} (see Proposition  \ref{prol-n-1}). This concludes the proof of Theorem
\ref{sf0}.

In Section \ref{prelungire-tan} we prove  Theorem \ref{sf}.
The construction of the canonical frame  $F^{\mathrm{can}}$
on $\bar{P}^{(\bar{l})}$ (or on any $\bar{P}^{(\bar{l}^{\prime})}$, for $\bar{l}^{\prime}\geq \bar{l}$),
required by Theorem \ref{sf}, is done in Proposition
\ref{ad-prop}. In Proposition \ref{ad-prop-1} we show that the automorphism group $\mathrm{Aut} ({\mathcal D}_{i}, \pi_{G})$ of a Tanaka structure $(\mathcal D_{i}, \pi_{G})$
(not necessarily of finite order) is isomorphic to the automorphism group of any of  the associated $G$-structures $\tilde{\pi}^{n}: \tilde{P}^{n} \ra \bar{P}^{(n-1)}$
($n\geq 1$).
When  $(\mathcal D_{i}, \pi_{G})$ is of finite
order
$\bar{l}$, the $G$-structure $\tilde{\pi}^{\bar{l}^{\prime}+1}: \tilde{P}^{\bar{l}^{\prime} +1} \ra \bar{P}^{(\bar{l}^{\prime})}$
is an absolute parallelism for large enough $\bar{l}^{\prime}$, which coincides with the
canonical frame of $\bar{P}^{(\bar{l}^{\prime})}$  (see Proposition \ref{ad-prop-2}).
This fact, combined with Proposition \ref{ad-prop-1} and Kobayashi's theorem mentioned above,  completes the proof of Theorem \ref{sf}.\\

\section{Preliminary material}\label{algebraic-sect}

\subsection{Quasi-gradations of filtred vector
spaces}\label{quasi-gradations}

Let $V= V_{-k} \supset V_{-k+1} \supset \dots \supset V_{l}$ be a
decreasing filtration of a finite dimensional vector space $V$ by
subspaces $V_{i}.$ We define $V_{j} = \{ 0\}$ for $j
> l$ and $V_{j} = V$ for $j < - k.$

\begin{defn}\label{def-quasi}
i) A gradation
$H = \{ H^{i},\ -k\leq i\leq l\}$ of $V$ is called {\bf adapted} (to the filtration $\{ V_{i}\}$) if
$V_{i} = H^{i}+ H^{i+1}+ \cdots + H^{l}$, for any $- k \leq i \leq l.$

\medskip

ii) A {\bf quasi-gradation of degree $m\geq 1$} (or shortly,
{\bf $m$-quasi-gradation}) of $V$ is a system of subspaces
$\bar{H}=\{ \bar{H}^{i},\ -k\leq i \leq  l\} $ such that, for any $-k\leq i\leq l$,
$$
a) V_{i} = \bar{H}^{i}+ V_{i+1},\quad b) \bar{H}^{i}\cap V_{i+1}
= V_{i+m}.
$$
\end{defn}

We denote by $\mathrm{Gr}(V)$ and $\mathrm{Gr}_{m}(V)$ the set of all adapted gradations, respectively
the set of all $m$-quasi-gradations of $V$. Remark that $\mathrm{Gr}_{m}(V) =
\mathrm{Gr}(V)$ for any $m\geq k+l+1$. For any $1\leq m \leq  p$,
we define
$$
\Pi_{p}^{m}: \mathrm{Gr}_{p}(V) \rightarrow \mathrm{Gr}_{m}(V),\
\Pi_{p}^{m} (\{ \bar{H}^{i}\} ) := \{ \bar{H}^{i}+ V_{i+m}\} .
$$
In particular, there is a natural map
\begin{equation}\label{pi-m}
\Pi^{m}:\mathrm{Gr} (V) \rightarrow \mathrm{Gr}_{m}(V),\
\Pi^{m} (\{ H^{i}\} ):=  \{ H^{i}+ V_{i+m}\} .
\end{equation}

\begin{defn}\label{def-compatible}
Any adapted gradation of $V$ which belongs to
$\mathrm{Gr}_{\bar{H}} (V):= (\Pi^{m})^{-1} (\bar{H})$ is called
{\bf compatible with the quasi-gradation $\bar{H}\in \mathrm{Gr}_{m} (V)$}.\end{defn}

Let $\mathrm{gr} (V) := \sum_{i=-k}^{l}  \mathrm{gr}^{i}(V)$, where
$\mathrm{gr}^{i}(V):= V_i/V_{i+1}$, be the graded vector space
associated to $V$. More generally, for any $m\geq 1$, let $\mathrm{gr}_{(m)} (V)
:=\sum_{i= -k}^{l} \mathrm{gr}^{i}_{(m)}(V)$, where
$\mathrm{gr}^{i}_{(m)}(V):= V_{i}/ V_{i+m}$. We denote by
$$
\mathrm{gr}^{i}: V_i \to \mathrm{gr}^{i}(V),\ \mathrm{pr}^{i}_{(m)} : V_{i}
\ra \mathrm{gr}^{i}_{(m)}(V),\ \mathrm{gr}^{i}_{(m)}:
\mathrm{gr}^{i}_{(m)} (V) \ra \mathrm{gr}^{i}(V)
$$
the natural projections. Remark that $\mathrm{gr}^{i} =
\mathrm{pr}^{i}_{(1)} $ and $\mathrm{pr}^{i}_{(1)} =
 \mathrm{gr}^{i}_{(m)}$ for $m\geq k+l+1.$
Any adapted gradation $H = \{ H^{i}\}$ defines injective maps $\widehat{H}^{i}:
\mathrm{gr}^{i}(V) \ra V_{i}$, with image $H^{i}\subset V_{i}$
(from the direct sum decompositions $V_{i} = V_{i+1} + H^{i}$). 
The next
proposition generalizes this statement to quasi-gradations.

\begin{prop}\label{functii} i) There is a one to one correspondence between the space
$\mathrm{Gr}_{m}(V)$ of $m$-quasi-gradations $\bar{H} = \{
\bar{H}^{i}\} $ and the space of  maps
${f} = ({f}^{i}): \mathrm{gr}(V) \ra \mathrm{gr}_{(m)}(V)$
where
\begin{equation}\label{cond}
{f}^{i} : \mathrm{gr}^{i}(V) \ra \mathrm{gr}^{i}_{(m)}(V),\
\mathrm{gr}^{i}_{(m)} \circ {f}^{i} = \mathrm{Id}_{\mathrm{gr}^{i}(V)},\quad -k\leq i\leq l.
\end{equation}
More precisely,  any $\bar{H}\in \mathrm{Gr}_{m}(V)$ defines a
map $\widehat{\bar{H}}= ( \widehat{\bar{H}}^{i} ):
\mathrm{gr}(V) \ra \mathrm{gr}_{(m)}(V)$ which satisfies
(\ref{cond}) and $\widehat{\bar{H}}^{i} : \mathrm{gr}^{i}(V) \ra
\mathrm{gr}^{i}_{(m)}(V)$ has image $\bar{H}^{i}/ V_{i+m}\subset
\mathrm{gr}^{i}_{(m)}(V)$. Conversely, any map ${f} = (f^{i}) :\mathrm{gr}(V) \ra \mathrm{gr}_{(m)}(V)$
as in  (\ref{cond})
defines  $\bar{H}= \{ \bar{H}^{i}\}\in \mathrm{Gr}_{m}(V)$  by
\begin{equation}\label{quasi-asoc}
\bar{H}^{i}:= (\mathrm{pr}^{i}_{(m)})^{-1} \mathrm{Im} (f^{i}),\quad  -k\leq i\leq l
\end{equation}
and $f=\widehat{\bar{H}}.$

\medskip

ii) A gradation $H$ is compatible with an $m$-quasi-gradation $\bar{H}$
if and only if
\begin{equation}\label{expr-fc}
\mathrm{pr}^{i}_{(m)}\circ \widehat{H}^{i} =
\widehat{\bar{H}}^{i},\quad   -k\leq i\leq l.
\end{equation}
\end{prop}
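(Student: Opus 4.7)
The plan is straightforward bookkeeping with the filtration and the two natural quotients $V_i/V_{i+1}$ and $V_i/V_{i+m}$, together with the elementary remark that a linear section of a surjection between vector spaces is determined by its image.

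For part i), starting from an $m$-quasi-gradation $\bar{H}$, axioms a) and b) of Definition \ref{def-quasi} say exactly that $V_{i+m}\subset \bar{H}^i$ and that the restriction of $\mathrm{gr}^i_{(m)}:V_i/V_{i+m}\to V_i/V_{i+1}$ to the subspace $\bar{H}^i/V_{i+m}$ is surjective (by a), since every element of $V_i$ can be written as a sum of one in $\bar{H}^i$ and one in $V_{i+1}$) and injective (by b), since an element of $\bar{H}^i$ lying in $V_{i+1}$ automatically lies in $V_{i+m}$). Hence this restriction is an isomorphism, and I define $\widehat{\bar{H}}^i : \mathrm{gr}^i(V)\to \mathrm{gr}^i_{(m)}(V)$ to be its inverse, composed with the inclusion $\bar{H}^i/V_{i+m}\hookrightarrow \mathrm{gr}^i_{(m)}(V)$. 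By construction $\widehat{\bar{H}}^i$ satisfies (\ref{cond}) and has image $\bar{H}^i/V_{i+m}$.

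For the converse, given $(f^i)$ satisfying (\ref{cond}), I define $\bar{H}^i$ by (\ref{quasi-asoc}). The identity $\mathrm{gr}^i_{(m)}\circ f^i = \mathrm{Id}$ forces $f^i$ to be injective and its image to be a vector space complement of $\ker(\mathrm{gr}^i_{(m)}) = V_{i+1}/V_{i+m}$ inside $\mathrm{gr}^i_{(m)}(V)$. Pulling this decomposition back through $\mathrm{pr}^i_{(m)}:V_i\to \mathrm{gr}^i_{(m)}(V)$ yields $V_i = \bar{H}^i + V_{i+1}$, while $\bar{H}^i \cap V_{i+1} = (\mathrm{pr}^i_{(m)})^{-1}\bigl(\mathrm{Im}(f^i)\cap \ker \mathrm{gr}^i_{(m)}\bigr) = (\mathrm{pr}^i_{(m)})^{-1}(0) = V_{i+m}$, verifying both axioms. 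The identification $f=\widehat{\bar{H}}$ is then immediate: both $f^i$ and $\widehat{\bar{H}}^i$ are sections of $\mathrm{gr}^i_{(m)}$ with the same image $\bar{H}^i/V_{i+m}$, and such a section is uniquely determined by its image.

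Part ii) is a corollary. Using the factorization $\mathrm{gr}^i = \mathrm{gr}^i_{(m)}\circ \mathrm{pr}^i_{(m)}$, the composition $\mathrm{pr}^i_{(m)}\circ \widehat{H}^i$ is automatically a section of $\mathrm{gr}^i_{(m)}$ with image $(H^i+V_{i+m})/V_{i+m}$. By Definition \ref{def-compatible}, compatibility of $H$ with $\bar{H}$ means $\Pi^m(H)=\bar{H}$, i.e.\ $H^i+V_{i+m} = \bar{H}^i$ for every $i$, which by the same uniqueness of a section with prescribed image is equivalent to $\mathrm{pr}^i_{(m)}\circ \widehat{H}^i = \widehat{\bar{H}}^i$, giving (\ref{expr-fc}). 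I do not foresee a genuine obstacle in this proof; the only point requiring care is keeping the three natural projections $\mathrm{gr}^i$, $\mathrm{pr}^i_{(m)}$, $\mathrm{gr}^i_{(m)}$ and their compositional relations straight.
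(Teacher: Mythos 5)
Your proposal is correct and follows essentially the same route as the paper's (largely omitted) proof: the key observation in both is that axioms a) and b) of Definition \ref{def-quasi} amount to the direct sum decomposition $V_i/V_{i+m}=\bar{H}^i/V_{i+m}\oplus V_{i+1}/V_{i+m}$, so that $\widehat{\bar{H}}^i$ is the inverse of the induced isomorphism $\bar{H}^i/V_{i+m}\to\mathrm{gr}^i(V)$, and the bijection and part ii) then reduce to the fact that a linear section of $\mathrm{gr}^i_{(m)}$ is determined by its image. You have merely written out the details the authors declare "straightforward"; all the steps check.
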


\begin{proof}
The proof is straightforward and we omit details. We only define the map
$\widehat{\bar{H}}$ associated to the quasi-gradation $\bar{H}\in \mathrm{Gr}_{m}(V)$, and this
is done as for gradations.
Namely, from Definition
\ref{def-quasi}, $V_{i}/ V_{i+m} = \bar{H}^{i}/ V_{i+m} + V_{i+1}/
V_{i+m}$ (direct sum decomposition). This induces an isomorphism
between $\mathrm{gr}^{i} (V) = (V_{i}/V_{i+m}) /
(V_{i+1}/V_{i+m})$ and $\bar{H}^{i}/ V_{i+m}\subset
\mathrm{gr}^{i}_{(m)}(V)= V_{i}/ V_{i+m}$, which gives the
required map $\widehat{\bar{H}}^{i}.$
Alternatively, $\widehat{\bar{H}}^{i}$ associates to $[y]\in \mathrm{gr}^{i}(V)$ the unique
$[z]\in  \mathrm{pr}^{i}_{(m)} (\bar{H}^{i})\subset \mathrm{gr}^{i}_{(m)} (V)$, such that $\mathrm{gr}^{i}_{(m)} ([z]) = [y]$.
\end{proof}

\subsection{Lifts and quasi-gradations}\label{lifts-sect}

Let $\gm = \sum_{i} \gm^{i}$ be a graded vector space, $V$ a filtered vector space and
$u: \gm \ra \mathrm{gr}(V)$  a graded vector space
isomorphism. Since $\gm$ is  graded, it is filtered in a natural way by the subspaces
$\gm_{i}:=\sum_{j\geq i} \gm^{j}$.

\begin{defn} A {\bf lift} of $u$ is a  filtration preserving isomorphism $F:
\gm\ra V$ which satisfies $\mathrm{gr}^{i}\circ F\vert_{\gm^{i}}=
u\vert_{\gm^{i}} $, for any $i$. More generally, an {\bf $m$-lift}
$(m\geq 1$)  is a map ${F} = ( F^{i}) : \gm \ra
\mathrm{gr}_{(m)}(V)$, where $F^{i}: \gm^{i} \ra
\mathrm{gr}^{i}_{(m)} (V)$ are such that
$\mathrm{gr}^{i}_{(m)} \circ F^{i} = u\vert_{\gm^{i}}$, for any
$i$.
\end{defn}

We remark that $F$ is a lift of $u$ if and only if it is
filtration preserving and $\mathrm{gr}^{i}\circ F\vert_{\gm_{i}}=
u\circ \pi_{\gm^i}\vert_{\gm_{i}} $, for any $i$.
(We always denote by $\pi_{\gm^{i}}:\gm \ra \gm^{i}$
the natural projection onto the degree $i$-component
$\gm^{i}$ of a graded vector space $\gm$).
The next
theorem generalizes  Lemma 7.1 of \cite{dmitri-spiro}.

\begin{thm}\label{g-f-vspace} There is a one to one correspondence between the space of $m$-quasi-gradations
of $V$ and the space of  $m$-lifts of $u$. More precisely, any  $m$-quasi-gradation $\bar{H}$
defines an $m$-lift, by $F_{\bar{H}}^{i} := \widehat{\bar{H}}^{i}
\circ u\vert_{\gm^{i}}.$ Conversely, any $m$-lift $F= (F^{i})$
defines an $m$-quasi-gradation $\bar{H}^{i} :=
(\mathrm{pr}^{i}_{(m)})^{-1} F^{i} (\gm^{i})$ and $F=F_{\bar{H}}.$
\end{thm}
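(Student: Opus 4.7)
The plan is to deduce this theorem directly from Proposition \ref{functii} by using the graded isomorphism $u : \gm \to \mathrm{gr}(V)$ to translate between maps defined on $\gm$ and maps defined on $\mathrm{gr}(V)$. Since $u$ restricts to an isomorphism $u|_{\gm^i} : \gm^i \to \mathrm{gr}^i(V)$ for each $i$, the assignment
\[
F = (F^i) \ \longleftrightarrow\ f = (f^i),\qquad f^i := F^i\circ (u|_{\gm^i})^{-1},
\]
is a bijection between set-valued maps $F^i : \gm^i \to \mathrm{gr}^i_{(m)}(V)$ and $f^i : \mathrm{gr}^i(V)\to \mathrm{gr}^i_{(m)}(V)$. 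The first step of the proof is to check that, under this bijection, the defining condition $\mathrm{gr}^i_{(m)}\circ F^i = u|_{\gm^i}$ for an $m$-lift is equivalent to the condition $\mathrm{gr}^i_{(m)}\circ f^i = \mathrm{Id}_{\mathrm{gr}^i(V)}$ from (\ref{cond}); this is immediate from composing with $(u|_{\gm^i})^{-1}$ on the right.

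Next, I would compose this bijection with the one from Proposition \ref{functii}, which assigns to each $m$-quasi-gradation $\bar{H}$ the map $\widehat{\bar{H}}$. Explicitly, given $\bar{H}\in \mathrm{Gr}_m(V)$, the associated $m$-lift is $F_{\bar{H}}^i = \widehat{\bar{H}}^i\circ u|_{\gm^i}$, which matches the formula in the statement. Conversely, given an $m$-lift $F$, the associated $f^i = F^i\circ (u|_{\gm^i})^{-1}$ has image $F^i(\gm^i)\subset \mathrm{gr}^i_{(m)}(V)$, so by (\ref{quasi-asoc}) the resulting $m$-quasi-gradation is $\bar{H}^i = (\mathrm{pr}^i_{(m)})^{-1}\mathrm{Im}(f^i) = (\mathrm{pr}^i_{(m)})^{-1} F^i(\gm^i)$, again matching the formula in the statement.

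Finally, I would verify that the two constructions are mutually inverse. Starting from $\bar{H}$, forming $F_{\bar{H}}$, and then recovering the quasi-gradation yields $(\mathrm{pr}^i_{(m)})^{-1}\widehat{\bar{H}}^i(u(\gm^i)) = (\mathrm{pr}^i_{(m)})^{-1}\widehat{\bar{H}}^i(\mathrm{gr}^i(V)) = (\mathrm{pr}^i_{(m)})^{-1}(\bar{H}^i/V_{i+m}) = \bar{H}^i$, using that $\mathrm{Im}(\widehat{\bar{H}}^i) = \bar{H}^i/V_{i+m}$ (Proposition \ref{functii}) and that $V_{i+m}\subset \bar{H}^i$ by Definition \ref{def-quasi}. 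The reverse composition, starting from an $m$-lift $F$ and going back, is handled analogously.

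There is no real obstacle here: once one notices that the theorem is exactly Proposition \ref{functii} transported along $u$, the proof reduces to bookkeeping. The only point that requires a moment of care is that $u$ be a graded \emph{isomorphism}, so that both $u|_{\gm^i}$ and its inverse are available; this is what allows the condition on $f^i$ in (\ref{cond}) to be transferred to the condition $\mathrm{gr}^i_{(m)}\circ F^i = u|_{\gm^i}$ defining $m$-lifts without losing information.
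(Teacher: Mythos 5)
Your proposal is correct and follows essentially the same route as the paper: both proofs transport the problem along the graded isomorphism $u$ and reduce it to Proposition \ref{functii} (the paper likewise observes that $F$ is an $m$-lift iff $F\circ u^{-1}$ satisfies (\ref{cond}), and then checks the explicit formulas). The only cosmetic difference is that the paper verifies $F=F_{\bar H}$ pointwise using the characterization of $\widehat{\bar H}^{i}$ from the proof of Proposition \ref{functii}, whereas you verify the composition in the other order and let the bijectivity of Proposition \ref{functii} supply the rest; both are adequate.
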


\begin{proof}
Let $\bar{H}\in \mathrm{Gr}_{m}(V)$.
From the definitions of
$F_{\bar{H}}^{i}$ and
$\widehat{\bar{H}}^{i}$,
$\mathrm{gr}^{i}_{(m)} \circ  F_{\bar{H}}^{i} =
\mathrm{gr}^{i}_{(m)} \circ \widehat{\bar{H}}^{i} \circ
u\vert_{\gm^{i}} = u\vert_{\gm^{i}}$, i.e.
$F_{\bar{H}}$ is an $m$-lift. Conversely, if $F$ is an
$m$-lift, then $F \circ u^{-1}: \mathrm{gr}(V) \ra
\mathrm{gr}_{(m)}(V)$ satisfies the properties from Proposition
\ref{functii}. We deduce that
$\bar{H}:= \{ \bar{H}^{i}\}$ where
$$
\bar{H}^{i}=
(\mathrm{pr}^{i}_{(m)})^{-1} \mathrm{Im} (F\circ u)^{i} =
(\mathrm{pr}^{i}_{(m)})^{-1} F^{i}(\gm^{i})
$$
is an $m$-quasi-gradation.
It remains to prove that $F=F_{\bar{H}} $. For this, let $x\in \gm^{i}$.
Since $\bar{H}^{i} = (\mathrm{pr}^{i}_{(m)})^{-1} F^{i}(\gm^{i})$,
$F^{i}(x) \in \mathrm{pr}^{i}_{(m)} (\bar{H}^{i})$. Since
$\mathrm{gr}^{i}_{(m)} \circ F^{i}(x) = u(x)$, we obtain
$F_{\bar{H}}^{i}(x) = \widehat{\bar{H}^{i}} (u(x)) = F^{i}(x)$,
as needed (the second equality follows from   the proof of Proposition \ref{functii}, by taking  $[y] = u(x)$ and $[z]= F^{i}(x)$).
\end{proof}

In view of the above theorem, we identify the space
$\mathrm{Gr}_{m}(V)$ of $m$-quasi-gradations with the space of
$m$-lifts of $u$. To avoid confusion, lifts of $u$ will be denoted
by $F_{H}$ and $m$-lifts by $F_{\bar{H}}$.  The map (\ref{pi-m}),
in terms of $m$-lifts, is
\begin{equation}\label{map-pi}
\Pi^{m}:\mathrm{Gr}(V) \ra \mathrm{Gr}_{m}(V),\ F_{H}=(F^{i}_{H})
\mapsto  F_{\bar{H}}= (F_{\bar{H}}^{i}:=\mathrm{pr}^{i}_{(m)} \circ
F^{i}_{H}).
\end{equation}

We end this subsection by discussing group actions on the space of quasi-gradations.
For this, we need to introduce new notation, which will be used also later in the paper.
Recall  that
if $U:= \sum_{i} U^{i}$ and $W:= \sum_{j} W^{j}$ are graded vector spaces, then
$U\wedge W:= \sum_{i} (U\wedge W)^{i}$ and
$\mathrm{Hom}(U, W) = \sum_{i}\mathrm{Hom}^{i}(U, W)$ are graded as well,
where $(U\wedge W)^{i} := \sum_{j+r=i}U^{j}\wedge W^{r}$
and $\mathrm{Hom}^{i}(U,W)
:= \sum_{j}\mathrm{Hom}(U^{j}, W^{j+i}).$
For any $A\in \mathrm{Hom}(U, W)$, we  denote by $A^{i}\in \mathrm{Hom}^{i}(U,W)$
its degree $i$ homogeneous component. In particular, the
vector subspaces
$$
\mathfrak{gl}^{j}(\gm ) := \{ A\in \mathfrak{gl}(\gm ),\  A(\gm^{i})\subset
\gm^{i+j},\ \forall i\}
$$
define a gradation of  $\mathfrak{gl}(\gm )$. This is a Lie algebra gradation: $[ \mathfrak{gl}^{j}(\gm ) , \mathfrak{gl}^{r}(\gm )]\subset
\mathfrak{gl}^{j+r} (\gm )$, for any $j, r.$ Consider the subalgebra
$\mathfrak{gl}_{m}(\gm):=\sum_{i\geq m} \mathfrak{gl}^{i} (\gm )$
and
$$
\mathrm{GL}_m(\gm ):= \{ B\in \mathrm{GL}(\gm ):\ B = \mathrm{Id} +
A,\  A\in \mathfrak{gl}_m(\gm ) \}$$ the Lie group with Lie algebra
$\mathfrak{gl}_{m} (\gm ).$  For $m\geq 2$,
 $GL_{m}(\gm )$ is a normal subgroup of $GL_{1} (\gm )$.
Any class $[A]\in GL_{1}(\gm ) / GL_{m}(\gm )$ 
is determined  by
the homogeneous components of $A$ up to degree $m-1.$

\begin{thm}\label{lifts-adaugat} i) The group $GL_{1}(\gm )$ acts simply transitively on $\mathrm{Gr}(V)$, by
$FA:= F\circ A$,  for any $F\in \mathrm{Gr}(V)$ and $A\in GL_{1}(\gm )$, and the orbits of the subgroup 
$GL_{m}(\gm)$ are  the fibers of the natural map
$\Pi^{m} :   \mathrm{Gr}(V) \rightarrow \mathrm{Gr}_{m} (V)$ defined by
(\ref{pi-m}).

\medskip

ii) The map $\Pi^{m}$ induces an isomorphism 
between the orbit space $ \mathrm{Gr} (V) / \mathrm{GL}_{m}(\gm )$ and $\mathrm{Gr}_{m}(V)$. 

\medskip 

iii)  The quotient group
$\mathrm{GL}_{1} (\gm ) / GL_{m}(\gm )$ acts 
simply transitively on $\mathrm{Gr}_{m} (V)$, by
\begin{equation}\label{f-bar-form}
 (F_{\bar{H}}[A])^{i} (x) :=
\sum_{j=0}^{m-1} f_{j+i,m} F^{j+i}_{\bar{H}} ( A^{j}(x)),\ \forall  x\in
\gm^{i},
\end{equation}
where $F_{\bar{H}} \in \mathrm{Gr}_{m}(V)$, $[A]\in
\mathrm{GL}_{1} (\gm ) / GL_{m}(\gm )$  and $f_{j+i}: \mathrm{gr}^{j+i}_{(m)}(V) \ra \mathrm{gr}^{i}_{(m)}(V)$
are  the natural maps.
\end{thm}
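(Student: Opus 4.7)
The plan is to exploit the bijection between $\mathrm{Gr}(V)$ (resp.\ $\mathrm{Gr}_m(V)$) and lifts (resp.\ $m$-lifts) of $u$ provided by Theorem~\ref{g-f-vspace}, and to transfer the obvious right action of $GL_1(\gm)$ on lifts, given by precomposition $F\mapsto F\circ A$, to both spaces.

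For part (i), given a lift $F$ of $u$ and $A=\mathrm{Id}+B$ with $B\in\mathfrak{gl}_1(\gm)$, I would first verify that $FA$ is again a lift: since $B$ strictly raises degree, $A$ preserves the filtration of $\gm$, and its degree-$0$ component is $\mathrm{Id}$, so $\mathrm{gr}^i\circ(FA)|_{\gm^i}=\mathrm{gr}^i\circ F|_{\gm^i}=u|_{\gm^i}$. Simple transitivity is then a uniqueness statement: for two lifts $F_1,F_2$ the only candidate is $A:=F_1^{-1}F_2\in\mathrm{GL}(\gm)$, and since both lifts induce the same $u$, for $x\in\gm^i$ we have $F_2(x)-F_1(x)\in V_{i+1}$, hence $A(x)-x\in F_1^{-1}(V_{i+1})=\gm_{i+1}$, forcing $A-\mathrm{Id}\in\mathfrak{gl}_1(\gm)$. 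The orbit description of $GL_m(\gm)$ is the same argument with $m$ in place of $1$: using (\ref{map-pi}), $\Pi^m(FA)=\Pi^m(F)$ is equivalent to $F(A(x)-x)\in V_{i+m}$ for all $x\in\gm^i$, i.e.\ $A-\mathrm{Id}\in\mathfrak{gl}_m(\gm)$.

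Part (ii) follows by combining (i) with the surjectivity of $\Pi^m$; the latter is routine, since given an $m$-lift $F_{\bar H}$ one constructs a lift by picking, in each degree, a linear section of $V_i\twoheadrightarrow V_i/V_{i+m}$ whose image meets $\bar H^i$, exactly as in the constructive half of Proposition~\ref{functii}.

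For part (iii), I would first observe that $\mathfrak{gl}_m(\gm)$ is an ideal of $\mathfrak{gl}_1(\gm)$ (immediate from $[\mathfrak{gl}^j(\gm),\mathfrak{gl}^r(\gm)]\subset\mathfrak{gl}^{j+r}(\gm)$), hence $GL_m(\gm)\triangleleft GL_1(\gm)$, so the right action of $GL_1(\gm)$ on $\mathrm{Gr}(V)$ descends to a simply transitive action of $GL_1(\gm)/GL_m(\gm)$ on $\mathrm{Gr}(V)/GL_m(\gm)\simeq\mathrm{Gr}_m(V)$ by (i) and (ii). The only substantive step is deriving the explicit formula~(\ref{f-bar-form}). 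Writing $A=\sum_{j\geq 0}A^j$ with $A^0=\mathrm{Id}$ and $A^j\in\mathfrak{gl}^j(\gm)$, for $x\in\gm^i$ one has $F_H(A(x))=\sum_{j\geq 0}F_H^{i+j}(A^j(x))$ with $F_H^{i+j}(A^j(x))\in V_{i+j}$, so by (\ref{map-pi}),
\[
(F_HA)^i_{\bar H}(x)=\mathrm{pr}^i_{(m)}\Bigl(\sum_{j\geq 0}F_H^{i+j}(A^j(x))\Bigr).
\]
Since $V_{i+j}\subset V_{i+m}$ for $j\geq m$, the sum truncates at $j\leq m-1$; for $j<m$ the class of $F_H^{i+j}(A^j(x))$ in $V_i/V_{i+m}$ is the image under $f_{i+j,m}:V_{i+j}/V_{i+j+m}\to V_i/V_{i+m}$ of its class in $V_{i+j}/V_{i+j+m}$, which by definition equals $F_{\bar H}^{i+j}(A^j(x))$. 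This yields~(\ref{f-bar-form}).

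The main obstacle I anticipate is purely notational: bookkeeping the several filtration/quotient maps in the derivation of~(\ref{f-bar-form}) and identifying $f_{i+j,m}$ as the canonical factorization $V_{i+j}/V_{i+j+m}\to V_i/V_{i+m}$ coming from the inclusions $V_{i+j}\subset V_i$ and $V_{i+j+m}\subset V_{i+m}$. Well-definedness of the action on the class $[A]$ is automatic once it is obtained as a quotient action, which is why I would set things up in that order.
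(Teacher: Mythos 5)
Your proposal is correct and follows essentially the same route as the paper: part (i) by direct verification, part (ii) from (i) plus surjectivity of $\Pi^{m}$, and part (iii) by defining the quotient action via $\Pi^{m}(F_{H})[A]:=\Pi^{m}(F_{H}\circ A)$ and then expanding $\mathrm{pr}^{i}_{(m)}(F_{H}\circ A)(x)$, truncating at $j\leq m-1$ and identifying $f_{j+i,m}\circ\mathrm{pr}^{j+i}_{(m)}=\mathrm{pr}^{i}_{(m)}\vert_{V_{j+i}}$, which is exactly the paper's computation. You merely supply more detail than the paper does for claim (i), which the paper dismisses as easy.
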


\begin{proof} 
Claim i) is easy, claim ii) follows from claim i) and the surjectivity of $\Pi^{m}$. We now prove iii). We define an action
of $GL_{1} (\gm ) / GL_{m}(\gm )$ on $\mathrm{Gr}_{m}(V)$ by $\Pi^{m} (F_{H}) [A] := \Pi^{m}( F_{H}\circ A)$,
for any $F_{H}\in \mathrm{Gr}(V)$ and $[A]\in GL_{1} (\gm ) / GL_{m}(\gm )$.
It is easy to check that it is a well-defined, simply transitive action. We now prove that it is given by (\ref{f-bar-form}).
To simplify notation, let $F_{\bar{H}} := \Pi^{m} (F_{H}).$ 
For any $x\in \gm^{i}$,
\begin{equation}\label{clar1}
(F_{\bar{H}} [A])(x)=\Pi^{m}(F_{H}A)(x)= \mathrm{pr}^{i}_{(m)}(
F_{H}\circ A)(x) =\sum_{j=0}^{m-1} (\mathrm{pr}^{i}_{(m)}\circ
F_{H}^{j+i}) ( A^{j}(x)).
\end{equation}
Consider  the left hand side of (\ref{f-bar-form}): for any fixed $0\leq j\leq m-1$,
\begin{equation}\label{clar2}
f_{j+i, m} F^{j+i}_{\bar{H}} (A^{j}(x))
= f_{j+i, m} \circ \mathrm{pr}^{j+i}_{(m)} \circ F_{H}^{j+i} (A^{j}(x))=
(\mathrm{pr}^{i}_{(m)} \circ F^{j+i}_{H})(A^{j}(x)),
\end{equation}
where we used 
(\ref{map-pi}) and $f_{j+i,m}\circ\mathrm{pr}^{j+i}_{(m)} =
\mathrm{pr}^{i}_{(m)}\vert_{V_{j+i}}$. Relation
(\ref{f-bar-form}) follows from (\ref{clar1}) and (\ref{clar2}). 
\end{proof}

\subsection{Tanaka prolongation of a non-positively graded Lie
algebra}\label{prol-sect-lie}

Let $\gm_{0} = \sum_{i=-k}^{-1}\gm^{i} +\gg^{0}$ be a
non-positively graded Lie algebra, with Lie bracket $[\cdot ,
\cdot ]$. We always assume that the negative part $\gm
:= \sum_{i=-k}^{-1}\gm^{i}$
of $\gm_{0}$  is fundamental, i.e. generated by
$\gm^{-1}$. We define inductively a sequence of vector
spaces $\gg^{r}$ ($r\geq 1$), such that,
with the notation $\gm_{f}:= \gm +
\sum_{r=0}^{f}\gg^{r}$ $(f\geq 0)$,
$\gg^{r} \subset \mathfrak{gl}^{r} (\gm_{r-1})$.
First, let
$$
\gg^{1}: = \{ A\in \mathfrak{gl}^{1}(\gm_{0}),\quad A[x,y]= [A(x),
y] + [ x, A(y)],\ \forall x, y\in \gm \} .
$$
Next, suppose that $\gg^{s}\subset\mathfrak{gl}^{s} (\gm_{s-1})$
are known for any $1\leq s\leq r$.
We define
\begin{equation}\label{g-l}
\gg^{r+1}:= \{ A\in \mathfrak{gl}^{r+1} (\gm_{r}),\ A[x,y] =
[A(x), y] + [x, A(y)] \quad \forall x,y\in \gm\}.
\end{equation}
In  (\ref{g-l})  $[\cdot , \cdot ]: \gm \times \gm_{r} \ra
\gm_{r}$ extends the Lie bracket $[\cdot , \cdot ]$ of $\gm$ and
\begin{equation}\label{mixed}
[x, z] = - [z, x] = - z(x),\quad  x\in \gm ,\ z\in \gg^{s}\subset
\mathfrak{gl}^{s}(\gm_{s-1}),\ s\leq r.
\end{equation}

Remark that any $A\in \gg^{r}\subset \mathfrak{gl}^{r}(\gm_{r-1})$
annihilates the non-negative part $\sum_{i=0}^{r-1}\gg^{i}$ of
$\gm_{r-1}$ and we may consider $\gg^{r}
\subset\mathrm{Hom}^{r} (\gm , \gm_{r-1}).$

\begin{thm}\cite{tanaka} The vector space $(\gm_{0})^{\infty}:= \gm_{0} + \sum_{r\geq 1}\gg^{r}$ has the structure
of a graded Lie algebra (called the {\bf Tanaka prolongation of $\gm_{0}$}),
with the following Lie bracket:

\medskip

i) the Lie bracket of two elements from $\gm_{0}$ is their Lie bracket in the Lie algebra $\gm_{0}$;

\medskip

ii) the Lie bracket $[x, z]$,  where  $x\in\gm$ and  $z\in
\gg^{s}$ ($s\geq 1$)   is given by (\ref{mixed}).

\medskip

iii) the Lie bracket 
$[f_{1}, f_{2}]$, where  $f_{1} \in 
\sum_{r\geq 0}\gg^{r}$ and $f_{2} \in  \sum_{r\geq 1}\gg^{r}$
is defined by induction by the condition  
$$
[f_{1}, f_{2}](x) = [ f_{1}(x), f_{2}] + [ f_{1}, f_{2}(x)],\quad
f_{1}\in \gg^{r_{1}},\ f_{2}\in \gg^{r_{2}},\ x\in \gm .
$$ 
\end{thm}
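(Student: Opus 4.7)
The plan is to construct the bracket on $(\gm_0)^\infty$ and verify the Lie-algebra axioms by a single joint induction on the total positive-degree $N = r_1 + r_2$. Rules (i) and (ii), together with graded antisymmetry, already determine every bracket in which at least one argument lies in $\gm$, as well as $[\gg^0, \gg^0]$. What remains is to define $[f_1, f_2]$ for $f_1 \in \gg^{r_1}$, $f_2 \in \gg^{r_2}$ with $r_1, r_2 \geq 0$ and $\max(r_1, r_2) \geq 1$, via the formula in (iii) (extended to $r_2 = 0$ by antisymmetry). Evaluated on $x \in \gm$, the right-hand side of (iii) involves only brackets whose positive-degree total is at most $N - 1$, so it is defined by the inductive hypothesis, and the resulting linear map $[f_1, f_2] : \gm \to \gm_{N-1}$ is visibly of graded degree $N$.

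The central verification is that this map in fact lies in $\gg^N \subset \mathfrak{gl}^N(\gm_{N-1})$, i.e.\ that it satisfies the derivation identity
\[
[f_1, f_2]([x, y]) = [[f_1, f_2](x), y] + [x, [f_1, f_2](y)], \qquad x, y \in \gm.
\]
Expanding both sides using (iii) and the derivation properties of $f_1$ and $f_2$, every term becomes an iterated bracket whose positive-degree total is strictly less than $N$; the two sides then match, term by term, via instances of the graded Jacobi identity in strictly smaller total degree, available by induction. This simultaneously establishes the grading property $[\gg^{r_1}, \gg^{r_2}] \subset \gg^{r_1+r_2}$ at the current level.

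Antisymmetry and the graded Jacobi identity are then carried along inside the same induction. Antisymmetry between $\gm$ and $\gg^s$ is built into (ii), and for two positive-degree arguments one evaluates $[f_1, f_2] + [f_2, f_1]$ on an arbitrary $x \in \gm$ using (iii) and reduces to antisymmetry of total degree at most $N - 1$. For the Jacobi identity $\sum_{\mathrm{cyclic}} [f_1, [f_2, f_3]] = 0$, it suffices to test on $x \in \gm^{-1}$ because $\gm$ is generated by $\gm^{-1}$; expanding via (iii) again reduces the check to Jacobi identities of positive-degree total at most $N - 1$. The main obstacle is the interdependence of these three statements — that (iii) lands in $\gg^N$, antisymmetry, and Jacobi — so all three must be bundled into one induction hypothesis and maintained together at each step. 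Once this joint invariant is preserved, $(\gm_0)^\infty$ carries a unique graded Lie algebra structure extending $\gm_0$ and satisfying (i), (ii), and (iii).
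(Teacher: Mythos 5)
The paper does not prove this statement: it is quoted from Tanaka's 1970 paper and used as a black box, so there is no in-paper argument to compare yours against. Your outline is the standard inductive proof and its main lines are sound: the bracket of two non-negative elements is defined by (iii) through evaluation on $\gm$, the right-hand side only involves brackets that are either base cases (rules (i), (ii)) or of strictly smaller positive-degree total, and the closure $[\gg^{r_1},\gg^{r_2}]\subset\gg^{r_1+r_2}$, antisymmetry and Jacobi are all pushed down the induction. Three points deserve care in a full write-up. First, within each induction step the order matters: closure must come first, since the Jacobi identity at total degree $N$ for a non-negative triple $(f_1,f_2,f_3)$ already uses that the inner brackets $[f_j,f_k]$ lie in $\gg^{r_j+r_k}$ with $r_j+r_k\leq N$. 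Second, your reduction of the Jacobi test to $x\in\gm^{-1}$ is legitimate only after observing that the Jacobiator, being a sum of brackets of elements of the $\gg^r$'s, already lies in $\gg^{N}\subset\mathrm{Hom}^{N}(\gm,\gm_{N-1})$ and is hence a derivation determined by its values on generators; it is actually simpler to test on all of $\gm$, where the nine terms of the expanded cyclic sum regroup into three Jacobiators $J(f_1(x),f_2,f_3)$, $J(f_1,f_2(x),f_3)$, $J(f_1,f_2,f_3(x))$ of strictly smaller total degree. Third, the Jacobi identity must also be verified for the mixed triples containing one or two elements of $\gm$; these are not covered by your induction on $r_1+r_2+r_3$ alone but reduce, respectively, to formula (iii) itself, to the defining derivation property of $\gg^r$, and to the Jacobi identity of $\gm_0$ — worth stating explicitly. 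Also note that in the derivation-property check the two expanded sides do not literally match term by term: four cross terms of the form $[f_1(x),[y,f_2]]$, $[[x,f_2],f_1(y)]$, etc.\ survive and cancel in pairs only after substituting $[y,f_2]=-f_2(y)$ from rule (ii). None of these is a gap in the idea, only in the bookkeeping.
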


\begin{defn}
Let $G \subset GL (\gm )$ be a Lie group with Lie algebra
$\gg^{0}$. The group $G^{l}:= \mathrm{Id} +
\gg^{l}\subset \mathrm{End}(\gm_{l-1})$
with group operation $(\mathrm{Id}+A)(\mathrm{Id} + B) := \mathrm{Id} + A +B$
(for any $A, B\in \gg^{l}$) is called the {\bf
$l$-Tanaka prolongation of $G$}. 
\end{defn}

We denote by $G^{l} GL_{l+1}(\gm_{l-1})$ the subgroup of
$GL(\gm_{l-1})$ of all automorphisms of the form $\mathrm{Id} +
A^{l} + A_{l+1}$, where $A^{l}\in \gg^{l}\subset \mathfrak{gl}^{l}
(\gm_{l-1})$ and $A_{l+1}\in \mathfrak{gl}_{l+1} (\gm_{l-1}).$ 
The Tanaka prolongation $G^{l}$  is isomorphic to the quotient of  $G^{l}GL_{l+1} (\gm_{l-1})$
by the normal subgroup $GL_{l+1} (\gm_{l-1}).$

\subsection{$G$-structures}\label{torsion-G}

\begin{notation}\label{notatie}{\rm
We begin by fixing notation. Our actions on manifolds are always
right actions. If a Lie group $G$ acts on a manifold $P$, we
denote by $R_{g}  : P\ra P$, $p \ra pg$ the action of $g\in G$ on
$P$ and by $(\xi^{a})^{P}$ (or simply $\xi^{a}$) the fundamental
vector field on $P$ generated by $a\in \mathrm{Lie}(G)=\gg .$ For
any $u\in P$, $a,b\in \gg$ and $g\in G$, $(R_{g})_{*}
(\xi^{a}_{u}) = (\xi^{\mathrm{Ad}(g^{-1})(a)})_{ug}$
(see e.g. \cite{KN}, p. 51) and $[
\xi^{a}, \xi^{b} ] =
 \xi^{[a,b]}$ (see e.g. \cite{KN}, p. 41). 
In particular, if $\pi : P \ra M$ is a principal
$G$-bundle and $\nu :\gg \ra  T^{v}P$ the vertical parallelism,
$\nu (a)_{u} = \nu_{u}(a) := \xi^{a}_{u}$,
then $\nu_{ug}= (R_{g})_{*} \circ \nu_{u}\circ\mathrm{Ad}(g)$.}
\end{notation}

Let $\pi : P \rightarrow M$ be a $G$-structure with structure
group $G \subset GL(V)$. Any $u\in P$ is a frame $u : V \ra
T_{p}M$. The action of $g\in G$ on $u$ is given by $u g:=
u\circ g.$ Let $\theta \in \Omega^{1} (P, V)$ be the soldering
form of $\pi$, defined by
$\theta_{u}(X) : = (u^{-1}\circ \pi_{*})(X)$, for any  $X\in T_{u}P.$
It is well-known that $\theta$ is $G$-equivariant (see e.g.
\cite{sternberg}, p. 309-310):
\begin{equation}\label{inv-GL}
R_{g}^{*}(\theta ) = g^{-1}\circ \theta,\ L_{\xi^{A}} (\theta ) =
- A\circ \theta ,\quad g\in G,\ A\in \gg\subset \mathfrak{gl} (V).
\end{equation}

Let $\rho$ be a connection on the $G$-structure $\pi  :P \ra M$.

\begin{defn}\label{rho-twist} A {\bf $\rho$-twisted } vector field is a vector
field $X_{a}$ on $P$ (where $a\in V$), such that $(X_{a})_{u}\in T_{u}P$ is
the $\rho$-horisontal lift of $u(a)\in T_{\pi (u)}M$, for any $u\in P$.
\end{defn}

According to \cite{sternberg} (see p. 356),
\begin{equation}\label{behaviour}
(R_{g})_{*} X_{a}= X_{g^{-1} (a)},\ [ \xi^{B}, X_{a}]=
X_{B(a)},\quad g\in G,\ B\in \gg \subset\mathfrak{gl}(V),\ a\in V.
\end{equation}

\begin{defn} The {\bf $\rho$-torsion function} is the function
\begin{equation}\label{homog-prel}
t^{\rho} : P \ra \mathrm{Hom}(\Lambda^{2}(V),V),\ t^{\rho}_{u}
(a\wedge b) := (d\theta)_{u} (X_{a}, X_{b}),\quad u\in P,\  a\wedge
b\in \Lambda^{2} (V).
\end{equation}
\end{defn}

Remark that  $\theta
(X_{a})=a$ is constant, for any $a\in V$, and
\begin{equation}\label{homog-r1}
t^{\rho}_{u}(a\wedge b) = - \theta_{u} ([X_{a}, X_{b}]) = -(u^{-1}\circ
\pi_{*})( [ X_{a}, X_{b}]_{u}),\quad u\in P,\ a\wedge b\in
\Lambda^{2}(V).
\end{equation}

\begin{thm}\label{invariancy} i) The torsion function $t^{\rho}$ is
$G$-equivariant:
\begin{equation}\label{t-a-h}
t^{\rho}_{u g} (a\wedge b) = g^{-1} t^{\rho}_{u}(g (a) \wedge
g(b)),\quad u\in P,\ g\in G,\ a\wedge b\in \Lambda^{2}(V).
\end{equation}
ii) For any other connection $\rho^{\prime}$ on $\pi$,
\begin{equation}\label{t-rho-prime}
t^{\rho^{\prime}}_{u}(a\wedge b) =  t^{\rho}_{u}(a\wedge b) - A (
b) + B( a ),\ u\in P,\ a\wedge b\in \Lambda^{2}(V).
\end{equation}
Above $A, B\in \gg\subset \mathrm{End}(V)$ are given by $\xi^{A}_{u}:=
(X^{\prime}_{a})_{u}- (X_{a})_{u}$,  $\xi^{B}_{u}:=
(X^{\prime}_{b})_{u}- (X_{b})_{u}$, where
$X_{a}$, $X_{b}$ (respectively, $X_{a}^{\prime}$,
$X_{b}^{\prime}$)  are the
$\rho$-twisted  (respectively, the $\rho^{\prime}$-twisted) vector fields
determined by $a$, $b$.
\end{thm}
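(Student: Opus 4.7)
The plan is to derive both statements from the two structural facts just recorded: the $G$-equivariance of the soldering form $\theta$ in (\ref{inv-GL}) and the behaviour of $\rho$-twisted vector fields from (\ref{behaviour}). Throughout, the key leverage is that $d\theta$ is tensorial, so that at any point $u\in P$ the value $(d\theta)_u(v,w)$ depends only on $v,w\in T_u P$, and that $\theta$ vanishes on vertical vectors while taking the constant value $c$ on any twisted field $X_c$.

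For part i), I would first rewrite (\ref{behaviour}) in the form $(X_a)_{ug} = (R_g)_\ast (X_{g(a)})_u$ (obtained by replacing $a$ with $g(a)$ in $(R_g)_\ast X_a = X_{g^{-1}(a)}$). Substituting into the definition of the torsion function and pulling the differential across the pushforward,
\begin{equation*}
t^{\rho}_{ug}(a\wedge b) = (d\theta)_{ug}\bigl((R_g)_\ast (X_{g(a)})_u,\, (R_g)_\ast (X_{g(b)})_u\bigr) = \bigl(d(R_g^\ast \theta)\bigr)_u\bigl((X_{g(a)})_u,(X_{g(b)})_u\bigr).
\end{equation*}
The first identity in (\ref{inv-GL}) replaces $R_g^\ast \theta$ by the constant linear map $g^{-1}\circ\theta$, which commutes with the exterior derivative and exits the formula to yield $g^{-1}\cdot t^{\rho}_u(g(a)\wedge g(b))$, which is (\ref{t-a-h}).

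For part ii), I would use the equivalent expression $t^{\rho}_u(a\wedge b) = -\theta_u([X_a,X_b])$ from (\ref{homog-r1}), which is valid for $\rho'$ as well since $\theta(X'_c)=c$ is again constant and the Cartan formula for $d\theta$ collapses to the bracket term. Writing $X'_a = X_a + \xi^A$, $X'_b = X_b + \xi^B$ locally near $u$ and using bilinearity of the bracket, the correction $t^{\rho'}_u - t^{\rho}_u$ reduces to
\begin{equation*}
-\theta_u\bigl([X_a,\xi^B]+[\xi^A,X_b]+[\xi^A,\xi^B]\bigr) = -\theta_u(-X_{B(a)}) - \theta_u(X_{A(b)}) - \theta_u(\xi^{[A,B]}) = B(a) - A(b),
\end{equation*}
where I invoked the two identities of (\ref{behaviour}), the bracket $[\xi^A,\xi^B]=\xi^{[A,B]}$ of Notation \ref{notatie}, and the verticality of fundamental vector fields. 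This is precisely (\ref{t-rho-prime}).

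The only delicate point is that the elements $A,B\in\gg$ in the statement are determined only at the point $u$, whereas the bracket manipulations above require $\xi^A,\xi^B$ to be genuine vector fields. This is harmless because $d\theta$ is a $2$-tensor: we may fix any smooth extensions of $A,B$ as constant $\gg$-valued functions near $u$, compute the bracket $[X'_a,X'_b]$ using these extensions, and the resulting value of $-\theta_u([\cdot,\cdot])$ is independent of the extension. This is the one ingredient one should verify carefully; once justified, the computation above goes through routinely.
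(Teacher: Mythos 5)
Your proposal is correct. Note that the paper itself gives no proof of Theorem \ref{invariancy} --- it is recalled as a standard fact from the theory of $G$-structures (with \cite{sternberg} as the implicit source) --- so there is no in-paper argument to compare against; but your derivation is the standard one and uses exactly the ingredients the paper relies on elsewhere: part i) from $(R_g)_*X_{g(a)}=X_a$ and $R_g^*\theta=g^{-1}\circ\theta$, part ii) from $t^\rho_u(a\wedge b)=-\theta_u([X_a,X_b])$ together with $[\xi^B,X_a]=X_{B(a)}$ and the verticality of fundamental fields. You correctly identify the one delicate point, namely that $X'_a-X_a$ is a vertical field whose generator varies from point to point, so the identity $X'_a=X_a+\xi^A$ holds only at $u$. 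Your resolution via tensoriality of $d\theta$ is sound, with one caveat in the phrasing: the quantity $-\theta_u([V,W])$ is \emph{not} extension-independent for arbitrary extensions $V,W$ of the given tangent vectors; what is extension-independent is $(d\theta)_u(V_u,W_u)$, and this collapses to $-\theta_u([V,W])$ precisely because your chosen extensions $X_a+\xi^A$, $X_b+\xi^B$ (with $A,B$ constant) satisfy $\theta(X_a+\xi^A)\equiv a$, $\theta(X_b+\xi^B)\equiv b$. For comparison, the paper handles the same difficulty in the proof of Theorem \ref{step-1} by expanding the vertical difference as $Y=\sum_s f_s\,\xi^{A_s}$ with variable coefficients $f_s$ and a basis $\{A_s\}$ of the Lie algebra, then using that the $X_b(f_s)\,\xi^{A_s}$ terms are vertical and hence killed by $\theta$; that route and yours are equivalent, and yours is arguably cleaner.
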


\subsection{Tanaka structures}\label{def-tanaka}

\subsubsection{Filtrations of the Lie algebra of vector fields}

Let $TM={\mathcal D}_{-k}\supset {\mathcal D}_{-k+1} \cdots
\supset {\mathcal D}_{l}$ ($l\geq -1$) be a flag of distributions
on a manifold $M$. For any $p\in M$, let
$\mathrm{gr}^{i}(T_{p}M)= {\mathcal D}^{i}_{p}:= (\mathcal
D_{i})_{p}/ ({\mathcal D}_{i+1})_{p}$,  $\mathrm{gr}(T_{p}M):=
\sum_{i} \mathrm{gr}^{i}(T_{p}M)$ and
$(\mathrm{gr}^{i})^{\mathcal D} : \mathcal
D_{i} \ra \mathrm{gr}^{i}(TM)$
the natural projection. We assume
that the non-positive part $\{ {\mathcal D}_{i},\ i\leq 0\}$
defines a filtration
$$
{\mathfrak X}(M)=\Gamma ({\mathcal D}_{-k}) \supset \Gamma
({\mathcal D}_{-k+1}) \supset \cdots \supset \Gamma ( {\mathcal
D}_{0})
$$
of the Lie algebra ${\mathfrak X}(M)$ of vector fields on $M$.
Then, for any $p\in M$, $\mathrm{gr}^{<0}(T_{p}M):= \sum_{i<  0}
\mathrm{gr}^{i}(T_{p}M)$ is a graded Lie algebra, with  Lie
bracket $\{ \cdot , \cdot\}_{p}$ (or just $\{\cdot , \cdot\}$ when $p$ is understood) 
induced by the Lie bracket of vector fields. It is called
the {\bf symbol algebra of $\{\mathcal D_{i}\}$} at $p$. The following  lemma
will be useful and can be checked directly.

\begin{lem}\label{usoara} Let $f: N \ra M$ be a smooth map of constant rank and
$\{ {\mathcal D}^{M}_{i},\ i \leq 0\}$  a flag of distributions
which defines a filtration of the Lie algebra ${\mathfrak X}(M)$.
Then $\{ {\mathcal D}^{N}_{i}= (f_{*})^{-1}({\mathcal D}^{M}_{i}),\
i\leq 0\}$ defines a filtration of the Lie algebra ${\mathfrak
X}(N)$. For any $X\in \Gamma ({\mathcal D}_{i}^{N})$, $Y\in \Gamma
({\mathcal D}_{j}^{N})$ with $i, j<0$ and $p\in N$,
$$
(\mathrm{gr}^{i+j})^{{\mathcal D}^M} f_{*} ( [X, Y]_{p}) = \{
(\mathrm{gr}^{i})^{{\mathcal D}^M} f_{*} (X_{p}),
(\mathrm{gr}^{j})^{{\mathcal D}^M} f_{*} (Y_{p})\}_{f(p)} .
$$
\end{lem}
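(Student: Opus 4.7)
The plan is to work locally and reduce to the rank theorem, which gives coordinates $(y^1,\dots,y^n)$ on $N$ and $(z^1,\dots,z^m)$ on $M$ in which $f$ is the projection $(y^1,\dots,y^n)\mapsto (y^1,\dots,y^r)$, where $r$ is the constant rank. In these coordinates $\ker f_{*,p}=\mathrm{span}(\partial_{y^{r+1}},\dots,\partial_{y^n})$, so $(\mathcal D^N_i)_p=\{v\in T_pN:\ f_{*,p}(v)\in(\mathcal D^M_i)_{f(p)}\}$ is a smooth subbundle which automatically contains $\ker f_*$. This establishes that $\mathcal D^N_i$ is a genuine distribution on $N$.

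First I would verify the filtration property. Write $X=\sum_{k=1}^n a^k\partial_{y^k}$, $Y=\sum_{k=1}^n b^k\partial_{y^k}$ with $X\in\Gamma(\mathcal D^N_i)$, $Y\in\Gamma(\mathcal D^N_j)$ ($i,j<0$). The hypothesis reads: for every fixed $y''=(y^{r+1},\dots,y^n)$, the vector field $\tilde X^{y''}:=\sum_{k=1}^r a^k(z,y'')\,\partial_{z^k}$ on (a neighbourhood in) $M$ lies in $\Gamma(\mathcal D^M_i)$, and analogously $\tilde Y^{y''}\in\Gamma(\mathcal D^M_j)$. Fix $p=(z_0,y_0'')$ and set $y''=y_0''$. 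A direct computation of $f_*[X,Y]_p$ splits the sum $\sum_l a^l\partial_{y^l}b^k-b^l\partial_{y^l}a^k$ into the horizontal part ($l\le r$) and the vertical part ($l>r$). The horizontal part is exactly $[\tilde X^{y_0''},\tilde Y^{y_0''}]_{f(p)}$, and hence lies in $(\mathcal D^M_{i+j})_{f(p)}$ by the assumed filtration on $M$.

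Next I would handle the vertical part: terms of the form $\sum_{l>r, k\le r} a^l(p)\partial_{y^l}b^k(p)\,\partial_{z^k}|_{f(p)}$. The key observation is that for each fixed $z$, the map $y''\mapsto \sum_{k=1}^r b^k(z,y'')\,\partial_{z^k}|_z$ is a smooth curve inside the vector subspace $(\mathcal D^M_j)_z$, so its $\partial_{y^l}$-derivative ($l>r$) again lies in $(\mathcal D^M_j)_{f(p)}$; symmetrically the $Y$-$X$ term lies in $(\mathcal D^M_i)_{f(p)}$. Both are therefore in $(\mathcal D^M_{\min(i,j)})_{f(p)}$, and since $i,j<0$ one has $\min(i,j)\ge i+j+1$, so this vertical contribution sits in $(\mathcal D^M_{i+j+1})_{f(p)}$. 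This simultaneously proves $f_*[X,Y]_p\in(\mathcal D^M_{i+j})_{f(p)}$ (i.e.\ $[X,Y]\in\Gamma(\mathcal D^N_{i+j})$) and shows $f_*[X,Y]_p\equiv [\tilde X^{y_0''},\tilde Y^{y_0''}]_{f(p)}$ modulo $(\mathcal D^M_{i+j+1})_{f(p)}$.

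The symbol identity then follows immediately: since $\tilde X^{y_0''}_{f(p)}=f_*X_p$ and $\tilde Y^{y_0''}_{f(p)}=f_*Y_p$, the definition of the symbol bracket $\{\cdot,\cdot\}_{f(p)}$ on $M$ gives $(\mathrm{gr}^{i+j})^{\mathcal D^M}[\tilde X^{y_0''},\tilde Y^{y_0''}]_{f(p)}=\{(\mathrm{gr}^i)^{\mathcal D^M}f_*X_p,(\mathrm{gr}^j)^{\mathcal D^M}f_*Y_p\}_{f(p)}$, and the vertical remainder vanishes after passing to $\mathrm{gr}^{i+j}$. I expect the main subtlety to be the vertical-derivative step: one has to recognize that differentiating the coefficient functions $b^k(z,y'')$ in the kernel directions of $f_*$ still produces vectors lying in $(\mathcal D^M_j)_z$, because the family is a curve in a fixed linear subspace. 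Once that geometric point is isolated, the estimate $\min(i,j)\ge i+j+1$ (which fails precisely when $i$ or $j$ equals $0$, justifying the hypothesis $i,j<0$) closes the argument.
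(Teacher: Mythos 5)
Your proof is correct. The paper offers no argument at all for this lemma (it is dismissed with ``can be checked directly''), so there is nothing of the authors' to compare against; your rank-theorem computation is precisely the kind of direct check intended, and the split of $f_*[X,Y]_p$ into a horizontal part (which reproduces $[\tilde X^{y_0''},\tilde Y^{y_0''}]_{f(p)}$ and is controlled by the filtration hypothesis on $M$) and a vertical part (landing in $(\mathcal{D}^M_{\min(i,j)})_{f(p)}\subset(\mathcal{D}^M_{i+j+1})_{f(p)}$ because $i,j<0$) correctly isolates both where the hypothesis on $M$ is used and where $i,j<0$ is needed. Two small caveats. First, your normal form $(y^1,\dots,y^n)\mapsto(y^1,\dots,y^r)$ implicitly treats $f$ as a submersion ($r=\dim M$); for a non-surjective constant-rank map the fields $\tilde X^{y''}$ are defined only along the image slice and must first be extended to local sections of $\mathcal{D}^M_i$ on a neighbourhood before the filtration hypothesis on $M$ applies (such extensions exist, and since $\tilde X^{y''},\tilde Y^{y''}$ are tangent to the slice the bracket along it is independent of the extension), and moreover $(f_*)^{-1}(\mathcal{D}^M_i)$ need not then have constant rank. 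Every application in the paper is to a bundle projection, so this is an imprecision of the lemma's statement rather than a defect of your argument, but it is worth a remark. Second, the claim that $\{\mathcal{D}^N_i\}$ filters ${\mathfrak X}(N)$ requires $[\Gamma(\mathcal{D}^N_i),\Gamma(\mathcal{D}^N_j)]\subset\Gamma(\mathcal{D}^N_{i+j})$ for all $i,j\le 0$, not only $i,j<0$; your computation does cover this, because for the bare filtration property one only needs the weaker containment $\mathcal{D}^M_{\min(i,j)}\subset\mathcal{D}^M_{i+j}$ of the vertical part, valid for all $i,j\le 0$ --- only the refined statement modulo $\mathcal{D}^M_{i+j+1}$, which feeds the symbol identity, uses $i,j<0$. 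State that case explicitly instead of restricting the whole verification to $i,j<0$.
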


\subsubsection{Definition of Tanaka structures}

Let $\gm_{0} = \sum_{i=-k}^{-1}\gm^{i} +\gg^{0}$ be a
non-positively graded Lie algebra, $(\gm_{0})^{\infty} = \gm_{0} +\sum_{i\geq 1}\gg^{i}$
its Tanaka prolongation and
 $(\gm_{l})^{\geq 0} = \sum_{i=0}^{l} \gg^{i}$ the non-negative
 part of $\gm_{l}= \gm_{0} +\sum_{i= 1}^{l}\gg^{i}.$

\begin{defn} A flag of distributions $TM={\mathcal D}_{-k}\supset
{\mathcal D}_{-k+1} \cdots \supset {\mathcal D}_{l}$ ($l\geq -1$)
is a {\bf filtration of type $\gm_{l}$} if the following conditions are satisfied:

\medskip

i) for any $i, j\leq 0$, $[ \Gamma (\mathcal D_{i}), \Gamma
(\mathcal D_{j} ) ] \subset \Gamma (\mathcal D_{i+j} )$;

\medskip

ii) for any $p\in M$, there is an isomorphism $u^{-}_{p}: \gm \ra
\mathrm{gr}^{<0}(T_{p}M)$ of graded Lie algebras;

\medskip

iii) for any $p\in M$, there is a {\bf canonical} isomorphism
$\nu_{p}: (\gm_{l})^{\geq 0}\ra \mathrm{gr}^{\geq 0}(T_{p}M)$ of
graded vector spaces.

\medskip

The isomorphism $u: = u^{-}_{p} \oplus \nu_{p}: \gm_{l} \ra
\mathrm{gr}(T_{p}M)$ is called a {\bf graded frame at $p$}.
\end{defn}

The group $\mathrm{Aut}(\gm )$ of automorphisms of the graded Lie
algebra $\gm$ acts simply transitively on the set $\mathbb{P}_{p}$
of graded frames at $p\in P$. We denote by $\pi : \mathbb{P} \ra
M$ the
principal bundle of graded frames. It has structure group
$\mathrm{Aut} (\gm )$.

\begin{defn}\label{def-tanaka-gen} Let $\{ {\mathcal D}_{i},\ -k\leq i\leq l\}$ be a filtration of type $\gm_{l}$ on a manifold $M$ and $G \subset
\mathrm{Aut}(\gm )$ a Lie subgroup of $\mathrm{Aut} (\gm )$.
A {\bf Tanaka $G$-structure of type $\gm_{l}$} on $M$ is a principal
$G$-subbundle $\pi_{G}: P_{G} \ra M$ of the bundle $\pi :
\mathbb{P}\ra M$ of graded frames.
\end{defn}

The notion of automorphism of a Tanaka structure is defined in a natural way:

\begin{defn}
An automorphism of a Tanaka $G$-structure $({\mathcal D}_{i},
\pi_{G}: P_{G}\ra M)$ of type $\gm_{l}$ is a diffeomorphism $f: M
\ra M$ with the following properties:

\medskip

i) it preserves the flag of distributions $\mathcal D_{i}$ (and
induces a map  $f_{*}: \mathrm{gr}(TM)\ra \mathrm{gr}(TM)$);

\medskip

ii) for any graded frame $u: \gm_{l} \ra \mathrm{gr}(T_{p}M)$ from
$P_{G}$, the composition $f_{*} \circ u: \gm_{l} \ra
\mathrm{gr}(T_{f(p)}M)$ is also a graded frame from $P_{G}.$
\end{defn}

Let $(\mathcal D_{i}, \pi_{G})$ be a Tanaka $G$-structure
of type $\gm = \sum_{i=-k}^{-1}\gm^{i}$  and  $\gg^{0} := \mathrm{Lie}(G).$
Since $\gg^{0} \subset\mathrm{Der}^{0} (\gm )$, $\gm (\gg^{0}):= \gm +
\gg^{0}$ is a graded Lie algebra: its Lie bracket $[ \cdot , \cdot
]$ extends the Lie brackets of $\gm$ and $\gg^{0}$ and $[a, b] =
- [b,a] = - b(a)$, for any $a\in \gm$ and $b\in \gg^{0}\subset\mathrm{End}(\gm ).$
Let $\gm (\gg^{0})^{\infty} := \gm (\gg^{0} )+ \sum_{l\geq 1}\gg^{l}$ be the
Tanaka prolongation of $\gm (\gg^{0}).$

\begin{defn} The Tanaka $G$-structure $(\mathcal D_{i}, \pi_{G})$ of type $\gm$ has
(finite) {\bf order} $\bar{l}$ if $\bar{l}$ is the minimal
number such that $\gg^{\bar{l}+1}=0$.
\end{defn}

\section{Statement of the main results}\label{statements}

In this paper we  aim to prove the following statements:

\begin{thm}\label{sf0} Let $ ({\mathcal D}_{i}, \pi_{G}: P_{G} \ra M)$ be a Tanaka $G$-structure of type
$\gm = \sum_{i = -k}^{-1}\gm^{i}$,  $\gm (\gg^{0})^{\infty} = \gm +
\sum_{i\geq 0}\gg^{i}$ the Tanaka prolongation  of $\gm (\gg^{0})= \gm + \gg^{0}$
(where $\gg^{0} = \mathrm{Lie}(G)$) and $G^{n}=
\mathrm{Id} +\gg^{n}$ the $n$-prolongation of $G$. There is a
sequence of principal $G^{n}$-bundles
$\bar{\pi}^{(n)} : \bar{P}^{(n)} \ra \bar{P}^{(n-1)}$ ($n\geq 1$),
with the following properties:

\medskip

${\bf A)}$ The base $\bar{P}^{(n-1)}$ has a Tanaka $\{
e\}$-structure of type $\gm_{n-1}$. This means that  there is a
flag of distributions $\{T\bar{P}^{(n-1)}= \bar{\mathcal
D}^{(n-1)}_{-k}\supset \cdots \supset \bar{\mathcal
D}^{(n-1)}_{n-1}\}$ which satisfies
$$
[\Gamma ( \bar{\mathcal D}^{(n-1)}_{i}), \Gamma (\bar{\mathcal
D}^{(n-1)}_{j})] \subset \Gamma ( \bar{\mathcal
D}^{(n-1)}_{i+j}),\ i,j\leq 0,
$$
and for  any $\bar{H}^{n-1}\in \bar{P}^{(n-1)}$, there is a
canonical graded vector space  isomorphism
$$
I_{\bar{H}^{n-1}} :\gm_{n-1}\ra
\mathrm{gr}(T_{\bar{H}^{n-1}}\bar{P}^{(n-1)})
$$ of
whose restriction to $\gm$ is a Lie algebra
isomorphism onto $\mathrm{gr}^{<0} (T_{\bar{H}^{n-1}}\bar{P}^{(n-1)})$.

\medskip

${\bf B)}$ The principal bundle $\bar{\pi}^{(n)}$ is the
quotient of a $G$-structure $\tilde{\pi}^{n}: \tilde{P}^{n} \ra
\bar{P}^{(n-1)}$, with structure group $G^{n} GL_{n+1}(\gm_{n-1})$,
by the normal subgroup $GL_{n+1} (\gm_{n-1}).$
The $G$-structure $\tilde{\pi}^{n}$ is a subbundle of the bundle
$\mathrm{Gr}(T\bar{P}^{(n-1)})\ra \bar{P}^{(n-1)}$ of
adapted gradations of $T\bar{P}^{(n-1)}$
(the latter being  a $G$-structure, whose frames are lifts of the graded frames 
$I_{\bar{H}^{n-1}}$, $\bar{H}^{n-1} \in \bar{P}^{(n-1)}$).
In particular, $\bar{\pi}^{(n)}:
\bar{P}^{(n)} = \tilde{P}^{n} /
GL_{n+1}(\gm_{n-1})\ra \bar{P}^{(n-1)}$ is canonically isomorphic to a subbundle of the bundle
$\mathrm{Gr}_{n+1} (T\bar{P}^{(n-1)})\ra T\bar{P}^{(n-1)}$ of
$(n+1)$-quasi-gradations of $T\bar{P}^{(n-1)}$.

\medskip

${\bf C)}$ The torsion function $t^{\tilde{\rho}}$ of one (equivalently, any) connection $\tilde{\rho}$ 
on $\tilde{\pi}^{n}$ satisfies $t^{\tilde{\rho}} _{H^{n}}(a\wedge b) \in (\gm_{n-1})_{i-1}$, for any 
$H^{n}\in\tilde{P}^{n}$ and $a\wedge b\in \gm^{-1}\wedge \gg^{i}$ ($0\leq i\leq n-1$),  
and
\begin{equation}\label{tilde-main}
(t^{\tilde{\rho}}_{H^{n}})^{0}(a\wedge b) = -  [a,b],\quad
H^{n}\in \tilde{P}^{n},\ a\wedge b \in \gm^{-1}\wedge
(\sum_{i=0}^{n-1}\gg^{i}).
\end{equation}
(In (\ref{tilde-main}) $[a,b]$ denotes the Lie bracket of $a$ and $b$ in the Lie algebra
$\gm (\gg_{0})^{\infty}$).
\end{thm}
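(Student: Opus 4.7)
The plan is to argue by induction on $n$, with the base case $n=1$ built directly on $P_G$ and the inductive step constructing $\bar{\pi}^{(n+1)}$ from the data already produced at level $n$. For the base case, I would first observe that $P_G$ already carries a natural Tanaka $\{e\}$-structure of type $\gm_0$: the distributions $\bar{\mathcal D}^{(0)}_i$ are obtained by pulling back $\mathcal D_i$ along $\pi_G$ for $i<0$ and by taking the vertical bundle for $i=0$, and the graded frame $I_{u}$ at $u\in P_G$ is assembled from $u$ itself on $\gm$ and from the vertical parallelism $\nu_u$ on $\gg^0$; Lemma \ref{usoara} then verifies the Lie algebra condition. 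The bundle $\pi^1$ is the set of adapted gradations lifting these graded frames, and cutting down by the torsion function produces $\tilde\pi^1$ with structure group $G^1 GL_2(\gm_0)$, whose quotient by $GL_2(\gm_0)$ is $\bar{\pi}^{(1)}$.

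For the inductive step, assuming $\bar\pi^{(n)}$ and the Tanaka $\{e\}$-structure on $\bar P^{(n-1)}$ are in place, I would define $P^{n+1}\subset\mathrm{Gr}(T\bar P^{(n)})$ fiberwise as those adapted gradations of $T_{\bar H^n}\bar P^{(n)}$ whose push-down under $(\bar\pi^{(n)})_*$ is compatible with the quasi-gradation $\bar H^n\in \mathrm{Gr}_{n+1}(T_{\bar H^{n-1}}\bar P^{(n-1)})$ in the sense of Definition \ref{def-compatible}. The Tanaka $\{e\}$-structure of type $\gm_n$ on $\bar P^{(n)}$ would be built analogously to the base case: distributions by iterated pullback plus the new vertical direction, and graded frames constructed from $I_{\bar H^{n-1}}$ together with the tautological identification of $\bar H^n$ with an $(n+1)$-quasi-gradation of $T\bar P^{(n-1)}$. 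Using the quasi-gradation / lift dictionary of Theorem \ref{g-f-vspace} and the simply transitive action of $\mathrm{GL}_1(\gm_{n-1})/GL_{n+1}(\gm_{n-1})$ from Theorem \ref{lifts-adaugat}, the fiber of $\pi^{n+1}$ is identified with $\mathrm{Id}+\mathfrak{gl}_{n+1}(\gm_n)+\mathrm{Hom}(\sum_{i=0}^{n-1}\gg^i,\gm_n)$.

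The substantive work is the torsion analysis. I would pick an auxiliary connection $\rho$ on $\pi^{n+1}$, expand its torsion $t^\rho$ on $(\gm^{-1}+\gg^n)\wedge \gm_n$, and split into the three pieces listed in the introduction. On $\gm^{-1}\wedge\gm$ the value is forced: since the canonical graded frames on $\bar P^{(n)}$ restrict to Lie algebra isomorphisms of symbol algebras, Lemma \ref{usoara} and the formula (\ref{homog-r1}) for $t^\rho$ pin down the leading component to $-[a,b]$, which is exactly (\ref{tilde-main}) when $b\in\gm$. On $\gm^{-1}\wedge\sum_{i=0}^{n-1}\gg^i$ the value descends from the torsion of $\tilde\pi^n$ via the projection $\bar\pi^{(n)}$, inheriting (\ref{tilde-main}) at the previous level together with the filtration bound. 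On $\gg^n\wedge\gm_n$ the relevant component is connection-dependent, and here I would exploit the action of $G^n GL_{n+1}(\gm_{n-1})$ on $P^{n+1}$, verifying that varying inside this group translates the relevant torsion component by a $\rho$-independent affine map; this allows one to extract a genuine invariant $\bar t^{(n+1)}:P^{n+1}\to\mathrm{Tor}^{n+1}(\gm_n)$ via a Spencer-type coboundary $\partial^{(n+1)}$. Choosing a complement $W^{n+1}$ to its image and setting $\tilde P^{n+1}=(\bar t^{(n+1)})^{-1}(W^{n+1})$ produces a $G$-structure with structure group $G^{n+1}GL_{n+2}(\gm_n)$, and the quotient by $GL_{n+2}(\gm_n)$ yields $\bar\pi^{(n+1)}$.

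The main obstacle I anticipate is not any single calculation but the bookkeeping needed to show that the three torsion components decouple cleanly and that the invariant $\bar t^{(n+1)}$ is indeed well-defined at the $G$-structure level rather than only modulo the $G^n GL_{n+1}(\gm_{n-1})$-action. In particular, one must verify that the change-of-connection formula (\ref{t-rho-prime}) interacts correctly with the filtration $\{(\gm_{n-1})_i\}$ so that the invariant part of $t^\rho$ lands in a connection-independent quotient; this is where the quasi-gradation formalism of Subsection \ref{lifts-sect} is essential, because it is precisely the decomposition (\ref{f-bar-form}) that identifies the pieces of $\mathfrak{gl}_1(\gm_{n-1})/\mathfrak{gl}_{n+1}(\gm_{n-1})$ acting trivially on the relevant torsion component. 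Once this is in place, the compatibility of the new flag $\{\bar{\mathcal D}^{(n+1)}_i\}$ with the graded frame $I_{\bar H^{n+1}}$ at level $n+1$, and the Lie algebra condition in part A at the next level, follow by differentiating the defining incidence relation of $P^{n+1}$ and invoking Lemma \ref{usoara} once more.
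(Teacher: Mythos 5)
Your proposal follows essentially the same route as the paper: induction on $n$, the base case via the canonical Tanaka $\{e\}$-structure on $P_G$ and the torsion reduction of $\pi^1$, and the inductive step via $P^{n+1}$ defined by compatibility with the quasi-gradation $\bar H^n$, the three-part decomposition of the torsion of $\pi^{n+1}$ (with the $\gg^n\wedge\gm_n$ piece controlled by the $G^nGL_{n+1}(\gm_{n-1})$-action, the $\gm^{-1}\wedge\gm$ piece by the symbol-algebra isomorphism, and the $\gm^{-1}\wedge\sum\gg^i$ piece inherited from $\tilde\pi^n$), followed by the Spencer coboundary $\partial^{(n+1)}$, the complement $W^{n+1}$, and the quotient by $GL_{n+2}(\gm_n)$. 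This is the paper's argument in outline, and the points you flag as delicate (connection-independence of $\bar t^{(n+1)}$ and the decoupling of the components) are exactly the ones the paper settles in Propositions \ref{deg-2} and \ref{part1}--\ref{part3} and Theorem \ref{modificare-tors}.
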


We reobtain the final result of the Tanaka's prolongation
procedure:

\begin{thm}\label{sf} Let $({\mathcal D}_{i}, \pi_{G}: P_{G} \ra M)$ be a Tanaka
$G$-structure of type $\gm = \sum_{i=-k}^{-1}$ and order $\bar{l}.$ The
$\bar{l}$-Tanaka prolongation $\bar{P}^{(\bar{l})}$ has a
canonical $\{ e\}$-structure. The  automorphism group
$\mathrm{Aut}({\mathcal D}_{i}, \pi_{G})$ of $({\mathcal D}_{i},
\pi_{G})$ is isomorphic to  the  automorphism group of this $\{
e\}$-structure. It is  a finite dimensional Lie
group with $\mathrm{dim}\mathrm{Aut}({\mathcal D}_{i}, \pi_{G})\leq
\mathrm{dim}(M) + \sum_{i=0}^{\bar{l}}\mathrm{dim}( \gg^{i}).$
\end{thm}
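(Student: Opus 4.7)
The plan is to combine iterated Theorem \ref{sf0} with Kobayashi's theorem on $\{e\}$-structures. First, I would argue that the tower of prolongations stabilizes and acquires an absolute parallelism. Since the order is $\bar{l}$, the Tanaka prolongations satisfy $\gg^{r}=0$ for all $r>\bar{l}$, so each $G^{r}=\mathrm{Id}+\gg^{r}$ with $r>\bar{l}$ is trivial and each $\bar{\pi}^{(r)}:\bar{P}^{(r)}\to\bar{P}^{(r-1)}$ in that range is a diffeomorphism. For such $r$, the structure group $G^{r}GL_{r+1}(\gm_{r-1})$ of $\tilde{\pi}^{r}$ reduces to $GL_{r+1}(\gm_{r-1})$; moreover, for $r-1\geq\bar{l}$ the nonzero homogeneous components of $\gm_{r-1}$ lie in degrees $[-k,\bar{l}]$, so the algebra $\mathfrak{gl}_{r+1}(\gm_{r-1})=\sum_{j\geq r+1}\mathfrak{gl}^{j}(\gm_{r-1})$ vanishes as soon as $r+1>\bar{l}+k$. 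Thus for any $\bar{l}^{\prime}\geq\bar{l}+k-1$ the $G$-structure $\tilde{\pi}^{\bar{l}^{\prime}+1}$ has trivial structure group, that is, it is an absolute parallelism on $\bar{P}^{(\bar{l}^{\prime})}$; pulling it back along the chain of diffeomorphisms $\bar{P}^{(\bar{l}^{\prime})}\to\cdots\to\bar{P}^{(\bar{l})}$ defines the canonical frame $F^{\mathrm{can}}$ on $\bar{P}^{(\bar{l})}$.

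Next I would prove the isomorphism $\mathrm{Aut}(\mathcal D_{i},\pi_{G})\simeq\mathrm{Aut}(\bar{P}^{(\bar{l})},F^{\mathrm{can}})$ by induction on the prolongation level $n$. Each ingredient in the construction of Theorem \ref{sf0} -- the canonical graded frame $I_{\bar{H}^{n-1}}$, the bundle of adapted gradations, its $G^{n}GL_{n+1}(\gm_{n-1})$-reduction $\tilde{P}^{n}$, the normalized torsion $\bar{t}^{(n)}$, and the resulting quotient $\bar{P}^{(n)}$ -- is defined intrinsically from the Tanaka $\{e\}$-structure already available on $\bar{P}^{(n-1)}$. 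Hence any $f\in\mathrm{Aut}(\mathcal D_{i},\pi_{G})$ lifts uniquely to a diffeomorphism $f^{(n)}:\bar{P}^{(n)}\to\bar{P}^{(n)}$ preserving all canonical data; at level $\bar{l}$ this lift preserves $F^{\mathrm{can}}$. Conversely, any $F^{\mathrm{can}}$-preserving diffeomorphism of $\bar{P}^{(\bar{l})}$ preserves the filtration of $T\bar{P}^{(\bar{l})}$ coming from the Tanaka $\{e\}$-structure of Theorem \ref{sf0}(A), hence preserves the vertical directions of the entire tower $\bar{P}^{(\bar{l})}\to\cdots\to P_{G}\to M$ and descends to an automorphism of $(\mathcal D_{i},\pi_{G})$.

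Finally I would apply Kobayashi's theorem (quoted in the introduction): the automorphism group of an absolute parallelism on a manifold $N$ is a Lie group of dimension at most $\dim N$. A telescoping dimension count across the principal $G^{i}$-bundles $\bar{\pi}^{(i)}$ (with $\dim G^{i}=\dim\gg^{i}$, starting from $\bar{P}^{(0)}=P_{G}\to M$, a principal $G$-bundle with $\dim G=\dim\gg^{0}$) yields $\dim\bar{P}^{(\bar{l})}=\dim M+\sum_{i=0}^{\bar{l}}\dim\gg^{i}$, which is the announced bound. The main obstacle is the inductive lifting of automorphisms through the tower, because $\tilde{P}^{n+1}$ is cut out by a noncanonical choice of complement $W^{n+1}$ to $\mathrm{Im}(\partial^{(n+1)})$: an a priori lift $f^{(n+1)}$ need not land in $\tilde{P}^{n+1}$. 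The resolution is that the $f$-action is equivariant with respect to the normalized torsion $\bar{t}^{(n+1)}$, and Part (C) of Theorem \ref{sf0} pins down enough of $\bar{t}^{(n+1)}$ to force $f^{(n+1)}$ to preserve $\tilde{P}^{n+1}$; verifying this equivariance consistently along the full chain is where the real bookkeeping lies.
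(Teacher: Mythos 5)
Your proposal is correct and follows essentially the same route as the paper: stabilization of the tower (using that $\gg^{r}=0$ for $r>\bar{l}$ and that $\mathfrak{gl}_{r+1}(\gm_{r-1})=0$ once $r+1>\bar{l}+k$, so $\tilde{\pi}^{\bar{l}'+1}$ becomes an absolute parallelism), transport of this parallelism down the chain of diffeomorphisms to define $F^{\mathrm{can}}$, the level-by-level lifting and descent of automorphisms (Proposition \ref{ad-prop-1}), and Kobayashi's theorem with the telescoping dimension count. Your identification of the noncanonical complement $W^{n+1}$ as the delicate point, resolved by the invariance of the normalized torsion $\bar{t}^{(n+1)}$ under induced automorphisms, is exactly the "easy check" the paper relies on in Proposition \ref{ad-prop-1}.
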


The remaining part of the paper is devoted to the proofs of
Theorem \ref{sf0} and \ref{sf}.

\section{The first prolongation of a Tanaka
structure}\label{1st-prol-sect}

Let $({\mathcal D}_{i}, \pi_{G} : P_{G} \ra M)$ be a Tanaka
$G$-structure of type $\gm = \gm^{-k} +\cdots + \gm^{-1}.$ In this
section we define the first principal bundle
$\bar{\pi}^{(1)}:\bar{P}^{(1)} \ra P$ 
 from Theorem \ref{sf0}.

\subsection{The $G$-structure $\pi^{1} : P^{1} \ra
P$}\label{g-tan}

To simplify notation, we  denote by $P:= P_{G}$ the total space of
$\pi_{G}$. Let $\nu^{0} : \gg^{0} \ra T^{v}P$ be
the  vertical parallelism of $\pi_{G}$, where $\gg^{0} =
\mathrm{Lie}(G).$  For any $i\leq -1$, let $\mathcal D_{i}^{P}:=(
\pi_{G})_{*}^{-1} (\mathcal D_{i})$ and $\mathcal D_{0}:= T^{v}
P$ the tangent vertical bundle of $\pi_{G}$. The sequence
\begin{equation}\label{TPG}
TP= {\mathcal D}^{P}_{-k}\supset {\mathcal D}^{P}_{-k+1}\supset
\cdots \supset {\mathcal D}^{P}_{-1}\supset {\mathcal D}_{0}^{P}
\end{equation}
defines  a filtration of the Lie algebra ${\mathfrak X}(P)$ of
vector fields on $P$ and the differential $(\pi_{G})_{*}$
induces a symbol algebra isomorphism
\begin{equation}\label{symbol-alg}
(\pi_{G})_{*}: \mathrm{gr}^{<0}(T_{u}P) \ra
\mathrm{gr}(T_{p}M),\quad u\in P,\ p= \pi_{G}(u).
\end{equation}

The next proposition can be
checked directly.

\begin{prop}\label{hat} Any point $u\in P$ defines an isomorphism
$$\hat{u} =  (\pi_{G})_{*}^{-1} \circ u +   \nu^{0}_{u}:
\gm_{0} = \gm +\gg^{0}\ra \mathrm{gr} (T_{u}P)=\mathrm{gr}^{<0} (T_{u}P) + T^{v}_{u}P.
$$
The set of isomorphisms $\{ \hat{u},\ u\in P\}$ is a Tanaka $\{e
\}$-structure  of type $\gm_{0}$ on $P$. \end{prop}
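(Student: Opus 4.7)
The plan is to establish two things: (a) for each $u\in P$, the map $\hat{u}$ is a graded vector space isomorphism whose restriction to $\gm$ is an isomorphism of graded Lie algebras onto $\mathrm{gr}^{<0}(T_{u}P)$; (b) the assignment $u\mapsto \hat{u}$ is canonical and provides a Tanaka $\{e\}$-structure of type $\gm_{0}$ on $P$.

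First I would verify that the flag (\ref{TPG}) is a filtration of the Lie algebra $\mathfrak{X}(P)$ of type $\gm_{0}$ (with $l=0$). For indices $i,j\leq -1$, the bracket condition $[\Gamma(\mathcal D^{P}_{i}), \Gamma(\mathcal D^{P}_{j})] \subset \Gamma(\mathcal D^{P}_{i+j})$ follows from Lemma \ref{usoara} applied to the submersion $\pi_{G}: P \ra M$ together with the analogous bracket condition on $M$. The remaining case involves $\mathcal D^{P}_{0}= T^{v}P$ and is handled directly in local coordinates: since the fiber coordinates of $u$ do not affect $\pi_{G}(u)$, differentiating the base components of $Y\in \Gamma(\mathcal D^{P}_{j})$ along a vertical field $X$ preserves the pointwise condition of lying in $\mathcal D_{j}$, so $\pi_{G*}[X,Y]_{u}\in (\mathcal D_{j})_{\pi_{G}(u)}$. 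Lemma \ref{usoara} then also yields the symbol algebra isomorphism (\ref{symbol-alg}), so that $(\pi_{G})_{*}^{-1}\circ u : \gm \ra \mathrm{gr}^{<0}(T_{u}P)$ is an isomorphism of graded Lie algebras.

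Since the filtration terminates at $\mathcal D^{P}_{0}$, the degree zero part of the associated graded is $\mathrm{gr}^{0}(T_{u}P) = \mathcal D^{P}_{0} = T^{v}_{u}P$, and the vertical parallelism furnishes the linear isomorphism $\nu^{0}_{u}:\gg^{0}\ra T^{v}_{u}P$. Combining this with the previous paragraph proves (a): the sum $\hat{u} = (\pi_{G})_{*}^{-1}\circ u + \nu^{0}_{u}$ is a graded vector space isomorphism whose restriction to $\gm$ is a Lie algebra isomorphism.

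For (b), the expression for $\hat{u}$ is manifestly canonical, depending only on the frame $u\in P$ itself, the differential of $\pi_{G}$, and the vertical parallelism coming from the principal bundle structure. Smoothness of $u\mapsto\hat{u}$ is immediate, so the family $\{\hat{u}\}_{u\in P}$ defines a smooth section of the bundle $\mathbb{P}\ra P$ of graded frames of type $\gm_{0}$, identifying $P$ with a principal $\{e\}$-subbundle of $\mathbb{P}$. This is precisely the asserted Tanaka $\{e\}$-structure. No serious obstacle arises; the only step requiring any care is the vertical-horizontal bracket argument that sets up the filtration of type $\gm_{0}$ on $P$.
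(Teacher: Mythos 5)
Your proof is correct and follows essentially the route the paper intends: the paper offers no written proof ("can be checked directly"), but the two facts it states just before the proposition — that (\ref{TPG}) filters $\mathfrak{X}(P)$ and that $(\pi_{G})_{*}$ induces the symbol algebra isomorphism (\ref{symbol-alg}) — are exactly what you derive from Lemma \ref{usoara} (noting that with $\mathcal D_{0}=0$ on $M$ the lemma already covers the vertical case $\mathcal D^{P}_{0}=(\pi_{G})_{*}^{-1}(0)=T^{v}P$, so your separate coordinate argument is not even needed). The remaining observations about $\nu^{0}_{u}$, canonicity and smoothness are the same direct checks the authors have in mind.
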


From  Theorem \ref{g-f-vspace} (applied to gradations), 
any  gradation $H = \{ H^{i}\} $ of $T_{u}P$ adapted to the
filtration (\ref{TPG}) determines a frame
\begin{equation}\label{tan-e}
F_{H}: \gm_{0}\ra T_{u}P ,\quad
F_{H}:= \widehat{H}\circ \hat{u},
\end{equation}
which lifts the graded frame $\hat{u}:\gm_{0} \ra
\mathrm{gr}(T_{u}P)$  (for the definition of $\widehat{H}$, see the comments before Proposition
\ref{functii}).  For any $a\in
\gg^{0}$, $F_{H}(a)
= \widehat{H} ( (\xi^{a})^{P}_{u})
= (\xi^{a})^{P}_{u}$. From Theorem
\ref{lifts-adaugat} i) we obtain:

\begin{prop}\label{pi-1}The principal bundle $\pi^{1} : P^{1} \rightarrow P$ of
adapted gradations of $TP$ is a $G$-structure with structure group
$GL_{1} (\gm_{0}).$ It consists of all frames
of $T_{u}P$ (for any $u\in P$) which are lifts of the canonical
graded frame $\hat{u}: \gm_{0}\ra \mathrm{gr} (T_{u} P)$.
\end{prop}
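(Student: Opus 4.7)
The plan is to assemble the proposition from three ingredients already established in the excerpt: Proposition \ref{hat} (which produces the canonical graded frame $\hat{u}$), Theorem \ref{g-f-vspace} (the bijection between quasi-gradations and lifts, applied here in the gradation case), and Theorem \ref{lifts-adaugat}(i) (simple transitivity of $GL_{1}(\gm)$ on adapted gradations). The main work is to glue the pointwise statement into a smooth principal bundle.

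First, I fix $u\in P$ and apply Theorem \ref{g-f-vspace} to $V:=T_{u}P$, filtered by (\ref{TPG}), with $\gm:=\gm_{0}$ graded and with graded isomorphism $\hat{u}:\gm_{0}\ra \mathrm{gr}(T_{u}P)$ furnished by Proposition \ref{hat}. For $m\geq k+2$ one has $\mathrm{Gr}_{m}(T_{u}P)=\mathrm{Gr}(T_{u}P)$, so the theorem specializes to a bijection between adapted gradations $H$ of $T_{u}P$ and lifts $F_{H}:\gm_{0}\ra T_{u}P$ of $\hat{u}$, given explicitly by (\ref{tan-e}). Each such lift is a linear isomorphism, hence a frame of $T_{u}P$ with model space $\gm_{0}$; in other words, $(\pi^{1})^{-1}(u)$ sits inside the fiber at $u$ of the $\mathrm{GL}(\gm_{0})$-frame bundle of $P$.

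Second, I invoke Theorem \ref{lifts-adaugat}(i): $GL_{1}(\gm_{0})$ acts simply transitively on $\mathrm{Gr}(T_{u}P)$ by $F_{H}\cdot A:=F_{H}\circ A$. This is the restriction to $GL_{1}(\gm_{0})\subset \mathrm{GL}(\gm_{0})$ of the canonical right action on the frame bundle. Consequently $GL_{1}(\gm_{0})$ acts freely on $P^{1}$ with orbits precisely the fibers of $\pi^{1}$, and $P^{1}$ is stable under this action (since the action preserves adaptedness by construction).

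Third, to lift this pointwise picture to a principal $GL_{1}(\gm_{0})$-bundle — that is, a $G$-structure of the claimed type on $P$ — I produce smooth local sections of $\pi^{1}$. Around any point $u_{0}\in P$, I choose a smooth local frame of $TP$ adapted to the filtration $\mathcal D^{P}_{\bullet}$ (so that the first $\dim \mathcal D^{P}_{0}$ vectors span $\mathcal D^{P}_{0}$, the next ones complete $\mathcal D^{P}_{-1}$, and so on). This yields a smooth family $u\mapsto H(u)$ of adapted gradations, hence a smooth map $u\mapsto F_{H(u)}$, i.e.\ a smooth local section. Combining this section with the free $GL_{1}(\gm_{0})$-action yields local trivializations $U\times GL_{1}(\gm_{0})\simeq (\pi^{1})^{-1}(U)$, which realize $\pi^{1}:P^{1}\ra P$ as a principal $GL_{1}(\gm_{0})$-subbundle of the frame bundle of $P$. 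The final assertion — that the fiber over $u$ consists of exactly those frames of $T_{u}P$ that lift $\hat{u}$ — is immediate from the identification established in the first step.

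I do not expect a serious obstacle: the content of the proposition is essentially a packaging of Theorems \ref{g-f-vspace} and \ref{lifts-adaugat}(i) at each $u$, promoted to a smooth bundle statement by a standard frame-type local triviality argument. The only mildly nontrivial point is verifying that the local sections are smooth, but this is automatic once one works with a local frame of $TP$ compatible with the smooth filtration $\mathcal D^{P}_{\bullet}$.
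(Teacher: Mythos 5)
Your argument is exactly the paper's: the authors derive Proposition \ref{pi-1} directly from Theorem \ref{g-f-vspace} (identifying adapted gradations of $T_{u}P$ with lifts of $\hat{u}$, i.e.\ frames, via (\ref{tan-e})) and Theorem \ref{lifts-adaugat}~i) (simple transitivity of $GL_{1}(\gm_{0})$), offering no further proof. Your additional verification of smooth local sections via filtration-adapted local frames is a correct filling-in of a detail the paper leaves implicit.
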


\subsection{The action of $G$ on $P^{1}$}\label{act-sub-1}

In this subsection we construct an action of $G$ on $P^{1}$ which
lifts the  action of $G$ on the total space $P= P_{G}$ of the principal $G$-bundle $\pi_{G}.$
For any $g\in G$, $R_{g} : P \ra P$ preserves the filtration  (\ref{TPG})
and induces a map $(R_{g})_{*} : \mathrm{gr} (T_{u}P) \ra \mathrm{gr} (T_{ug}P)$, for any $u\in P.$
Let
\begin{equation}\label{def-rho-0}
\rho : G \ra \mathrm{Aut} (\gm_{0}),\ \rho (g) (a + b):= g(a)
+\mathrm{Ad}(g)(b),\quad g\in G,\ a\in \gm,\ b\in \gg^{0}.
\end{equation}

\begin{prop}\label{action-g0} i) For any $u\in P$ and $g\in G$,
the frames $\hat{u}$ and $\widehat{ug}$ from Proposition \ref{hat}
are related by
\begin{equation}\label{hat-u}
\widehat{ug} = (R_{g})_{*}\circ \hat{u} \circ \rho (g) :
 \gm_{0}\ra \mathrm{gr}
(T_{ug} P).
\end{equation}
ii) There is an action of $G$ on $P^{1}$, which associates to any
frame $F_{H}: \gm_{0}\ra T_{u}P$ from $P^{1}$ and $g\in G$ the
frame
\begin{equation}\label{act}
F_{H} g := (R_{g})_{*}\circ F_{H} \circ \rho (g) :
\gm_{0} \ra T_{ug} P .
\end{equation}
iii) For any $a\in \gg^{0}$, the fundamental vector field
$(\xi^{a})^{P^{1}}$ of the above action of $G$  on $P^{1}$, generated by $a$, is $\pi^{1}$-projectable and
$(\pi^{1})_{*}(\xi^{a})^{P^{1}}
=(\xi^{a})^{P}.$ \end{prop}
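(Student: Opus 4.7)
The plan is to verify the three parts in order, with claim (i) doing the real work and (ii), (iii) following essentially formally from it.

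For part (i), I would check $\widehat{ug} = (R_g)_* \circ \hat{u} \circ \rho(g)$ separately on the summands $\gm$ and $\gg^0$ of $\gm_{0}$, exploiting the splitting $\hat{u} = (\pi_G)_*^{-1} \circ u + \nu^0_u$. On $\gm$, I would use $(ug)(a) = u(g(a))$ by definition of the right $G$-action on the frame bundle, together with the fact that $\pi_G \circ R_g = \pi_G$, which forces $(R_g)_*$ to preserve the filtration of $TP$ and to be a right inverse to $(\pi_G)_*\colon \mathrm{gr}^i(T_{ug}P) \ra \mathcal{D}^i_p$; since the latter is an isomorphism on graded spaces, this right inverse must coincide with $(\pi_G)_*^{-1}$. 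On $\gg^0$, for $b \in \gg^0$ the right-hand side reduces to $(R_g)_*\, \xi^{\mathrm{Ad}(g)(b)}_u$, and the identity $(R_g)_*(\xi^c_u) = \xi^{\mathrm{Ad}(g^{-1})(c)}_{ug}$ recalled in the notation section collapses this to $\xi^b_{ug} = \nu^0_{ug}(b) = \widehat{ug}(b)$.

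For part (ii), I first observe that $\rho$ is a group homomorphism: this is immediate because $\mathrm{Ad}$ is, and the $\gm$-summand is just the defining action of $G$ on $\gm$. Given $F_H \in P^1$ above $u$, the map $(R_g)_* \circ F_H \circ \rho(g)$ is a filtration-preserving linear isomorphism $\gm_{0} \ra T_{ug}P$, since $\rho(g)$ preserves the grading of $\gm_{0}$, $F_H$ is filtration-preserving by construction, and $(R_g)_*$ preserves the filtration of $TP$. Its associated graded map equals $\widehat{ug}$ by part (i), so by Proposition \ref{pi-1} it lies in $P^1$ over $ug$. The action axioms $(F_H \cdot g_1) \cdot g_2 = F_H \cdot (g_1 g_2)$ and $F_H \cdot e = F_H$ then follow from $R_{g_2} \circ R_{g_1} = R_{g_1 g_2}$ and $\rho(g_1) \circ \rho(g_2) = \rho(g_1 g_2)$.

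For part (iii), let $a \in \gg^0$ and $F_H \in P^1$ above $u$. By the definition of the fundamental vector field and the equivariance of $\pi^1$ with respect to the two $G$-actions,
\[
(\pi^1)_* (\xi^a)^{P^1}_{F_H} = \frac{d}{dt}\Big|_{t=0} \pi^1\bigl(F_H \cdot \exp(ta)\bigr) = \frac{d}{dt}\Big|_{t=0} \bigl(u \cdot \exp(ta)\bigr) = (\xi^a)^P_u,
\]
which depends only on $u = \pi^1(F_H)$; hence $(\xi^a)^{P^1}$ is $\pi^1$-projectable with projection $(\xi^a)^P$. The main obstacle is really just the bookkeeping in part (i): one must be careful about which graded or ungraded space each object lives in and use $\pi_G$-relatedness of $R_g$ with the identity on $M$ correctly. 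Once that is set up cleanly, both (ii) and (iii) are essentially formal.
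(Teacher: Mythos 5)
Your proposal is correct and follows essentially the same route as the paper: the paper proves (i) from the definitions of $\hat{u}$, $\widehat{ug}$ and the identity $\nu^{0}_{ug}=(R_{g})_{*}\circ\nu^{0}_{u}\circ\mathrm{Ad}(g)$ (which you rederive from $(R_{g})_{*}\xi^{c}_{u}=\xi^{\mathrm{Ad}(g^{-1})c}_{ug}$), proves (ii) by checking that $F_{H}g$ is a lift of $\widehat{ug}$ using that $\rho(g)$ is gradation preserving, and proves (iii) from $\pi^{1}\circ R_{g}=R_{g}\circ\pi^{1}$. You have merely filled in the "direct computation" the paper leaves implicit, and done so correctly.
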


\begin{proof}
Claim i) follows from the definition of $\hat{u}$, $\widehat{ug}$,
and $\nu^{0}_{ug} = (R_{g})_{*} \circ \nu_{u}^{0}\circ
\mathrm{Ad}(g)$.  For claim ii), one
checks that  $F_{H}g\in P^{1}$, i.e. is a lift of $\widehat{ug}$ (direct
computation, which uses that $F_{H}$ is a lift of $\hat{u}$ and
that $\rho$ is gradation preserving). Claim iii) follows from
$R_{g}\circ \pi^{1} = \pi^{1} \circ R_{g}$ (where we use the same notation $R_{g}$ for the actions of $g \in G$ on
$P^{1}$ and $P$). 
\end{proof}

\begin{lem}\label{invG} The soldering form $\theta^{1} \in \Omega^{1}
(P^{1}, \gm_{0})$ of $\pi^{1}$ is $G$-equivariant:
\begin{equation}\label{thetaG}
(R_{g})^{*} \theta^{1}= \rho (g^{-1}) \circ \theta^{1}, \quad
L_{(\xi^{a})^{P^{1}}}(\theta^{1}) = - \rho_{*}(a)\circ
\theta^{1},\quad g\in G,\ a\in \gg^{0}.
\end{equation}
\end{lem}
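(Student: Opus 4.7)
The plan is to unpack the definition of the soldering form $\theta^{1}$ and exploit the explicit formula for the $G$-action on $P^{1}$ given in Proposition \ref{action-g0}(ii). Recall that for any $F_H\in P^1$ covering $u\in P$, one has by definition $\theta^1_{F_H}(X) = F_H^{-1}\bigl((\pi^1)_*X\bigr)$ for $X\in T_{F_H}P^1$. The key structural facts are: (a) $F_H g = (R_g)_*\circ F_H\circ \rho(g)$ from (\ref{act}), and (b) $\pi^1\circ R_g = R_g\circ \pi^1$, which follows from the fact that $\pi^1$ is $G$-equivariant (used already in the proof of Proposition \ref{action-g0}(iii)).

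First I would prove the finite equivariance $(R_g)^*\theta^1=\rho(g^{-1})\circ\theta^1$ by direct computation. For $X\in T_{F_H}P^1$,
\begin{equation*}
((R_g)^*\theta^1)_{F_H}(X)=\theta^1_{F_Hg}\bigl((R_g)_*X\bigr)=(F_Hg)^{-1}\bigl((\pi^1)_*(R_g)_*X\bigr).
\end{equation*}
Using (a), $(F_Hg)^{-1}=\rho(g^{-1})\circ F_H^{-1}\circ (R_g)_*^{-1}$ (where on the right $R_g$ denotes the action on $P$), and using (b), $(\pi^1)_*(R_g)_*=(R_g)_*(\pi^1)_*$. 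Substituting, the two copies of $(R_g)_*$ cancel and one is left with $\rho(g^{-1})\circ F_H^{-1}\circ (\pi^1)_*X=\rho(g^{-1})\theta^1_{F_H}(X)$. This yields the first identity in (\ref{thetaG}).

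Next I would obtain the infinitesimal version by differentiating at $g=e$. Replacing $g$ by $\exp(ta)$ for $a\in\gg^0$ and applying $\frac{d}{dt}\big|_{t=0}$ to the already established finite identity gives
\begin{equation*}
L_{(\xi^a)^{P^1}}\theta^1=\frac{d}{dt}\bigg|_{t=0}(R_{\exp(ta)})^*\theta^1=\frac{d}{dt}\bigg|_{t=0}\rho(\exp(-ta))\circ\theta^1=-\rho_*(a)\circ\theta^1,
\end{equation*}
which is the second identity. Here one uses that $(\xi^a)^{P^1}$ is the fundamental vector field generating $R_{\exp(ta)}$ on $P^1$, which is exactly the content of Proposition \ref{action-g0}(iii) for the $G$-action on $P^1$ defined by (\ref{act}).

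I do not foresee any genuine obstacle: the argument is a bookkeeping exercise once one writes out the definitions and inverts $F_Hg=(R_g)_*\circ F_H\circ\rho(g)$. The only subtle point worth emphasizing is that the twist by $\rho$ (rather than by $g$ itself) is built into the definition of the $G$-action on $P^1$, which is precisely what forces $\rho(g^{-1})$ (and not $g^{-1}$) to appear on the right-hand side; this reflects the fact that $\rho$ records how $G\subset\mathrm{Aut}(\gm)$ acts on $\gm_0=\gm+\gg^0$ via its natural action on $\gm$ together with the adjoint action on $\gg^0$.
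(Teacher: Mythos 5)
Your proposal is correct and follows essentially the same route as the paper's own proof: both compute $((R_g)^*\theta^1)(X)=(F_Hg)^{-1}\bigl((\pi^1\circ R_g)_*X\bigr)$, invert $F_Hg=(R_g)_*\circ F_H\circ\rho(g)$, cancel the two pushforwards using $\pi^1\circ R_g=R_g\circ\pi^1$, and obtain the Lie-derivative identity as the infinitesimal version of the finite one. The only difference is that you write out the differentiation at $g=\exp(ta)$ explicitly, which the paper leaves as a one-line remark.
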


\begin{proof} From the definition of $\theta^{1}$ and $R_{g}\circ \pi^{1} = \pi^{1} \circ R_{g}$,
we obtain, for any $X_{{H}}\in T_{{H}}P^{1}$,
\begin{align*}
&((R_{g})^{*} \theta^{1} )(X_{{H}}) = \theta^{1}
((R_{g})_{*}X_{{H}}) = (F_{H}g)^{-1} (\pi^{1}\circ
R_{g})_{*} (X_{{H}}) \\
&= (\rho (g^{-1})\circ (F_{H})^{-1} \circ (R_{g^{-1}})_{*}\circ (\pi^{1}\circ
R_{g})_{*}) (X_{H})\\
&= (\rho(g^{-1}) \circ (F_{H})^{-1} \circ (\pi^{1})_{*}) (X_{H}) = (\rho(g^{-1}) \circ \theta^{1}) (X_{H}).
\end{align*}
The second relation
(\ref{thetaG})  is the infinitesimal version of the first.
\end{proof}

\subsection{The torsion function $t^{\rho}$ of
$\pi^{1}$}\label{torsion-1}

Let $\rho$ be a connection on the $G$-structure $\pi^{1} : P^{1}
\ra P$. In this section we study the properties of the torsion
function $t^{\rho}$, in connection with the gradation of
$\gm_{0}.$
Let $\{ X_{a},\ a\in \gm_{0}\}$
be the family of $\rho$-twisted vector fields
on $P^{1}$ (recall Section \ref{torsion-G}).
For any $a\in \gm_{0}$, $(X_{a})_{H}\in T_{H}P^{1}$ is
the $\rho$-horisontal lift of $F_{H}(a)\in T_{p}P$ (where $\pi^{1} (H) =p$);
when $a\in \gg^{0}$,  $X_{a}\in {\mathfrak X}(P^{1})$ is the $\rho$-horisontal lift of
$(\xi^{a})^{P}\in {\mathfrak X}(P)$.

\begin{prop}\label{non-neg}  The function $t^{\rho} : P^{1} \ra
\mathrm{Hom} (\Lambda^{2} (\gm_{0}), \gm_{0})$ has only
components of non-negative homogeneous degree. \end{prop}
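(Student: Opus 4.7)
The plan is to prove the equivalent reformulation that, for $a\in\gm_{0}^{j}$ and $b\in\gm_{0}^{k}$ with $j,k\leq 0$, the value $t^{\rho}_{H}(a\wedge b)$ lies in the filtration subspace $(\gm_{0})_{j+k}$. This reformulation is what the vanishing of all negative-degree components of $t^{\rho}$ amounts to: a degree-$l$ component of $\mathrm{Hom}(\Lambda^{2}\gm_{0},\gm_{0})$ sends $\gm_{0}^{j}\wedge\gm_{0}^{k}$ into $\gm_{0}^{j+k+l}$, so the containment above forces all components with $l<0$ to vanish.

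First, I would exploit that each frame $F_{H}\in P^{1}$ is a lift of the canonical graded frame $\hat{u}$ at $u=\pi^{1}(H)$ (Proposition~\ref{hat}). Being a lift, $F_{H}$ is filtration-preserving, so $F_{H}((\gm_{0})_{j})=\mathcal{D}^{P}_{j}|_{u}$ for every $j\leq 0$ (with $\gg^{0}\to T^{v}_{u}P=\mathcal{D}^{P}_{0}|_{u}$ via $F_{H}(a)=(\xi^{a})^{P}_{u}$). In particular, for $a\in\gm_{0}^{j}$ the $\rho$-twisted vector field $X_{a}$ satisfies $(\pi^{1})_{*}(X_{a})_{H}=F_{H}(a)\in\mathcal{D}^{P}_{j}|_{u}$, so $X_{a}$ is a section of the pulled-back flag $\mathcal{D}^{P^{1}}_{j}:=((\pi^{1})_{*})^{-1}(\mathcal{D}^{P}_{j})$.

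Next, I would invoke Lemma~\ref{usoara} applied to the submersion $\pi^{1}:P^{1}\to P$: since $\{\mathcal{D}^{P}_{i},\ i\leq 0\}$ defines a filtration of $\mathfrak{X}(P)$ (as established in Subsection~\ref{g-tan}), the pulled-back family $\{\mathcal{D}^{P^{1}}_{i},\ i\leq 0\}$ defines a filtration of $\mathfrak{X}(P^{1})$. Consequently $[X_{a},X_{b}]\in\Gamma(\mathcal{D}^{P^{1}}_{j+k})$ whenever $a\in\gm_{0}^{j}$ and $b\in\gm_{0}^{k}$. Formula~(\ref{homog-r1}) then gives $t^{\rho}_{H}(a\wedge b)=-(F_{H})^{-1}((\pi^{1})_{*}[X_{a},X_{b}]_{H})$; since $(\pi^{1})_{*}[X_{a},X_{b}]_{H}\in\mathcal{D}^{P}_{j+k}|_{u}$ and $(F_{H})^{-1}$ maps $\mathcal{D}^{P}_{j+k}|_{u}$ isomorphically onto $(\gm_{0})_{j+k}$ by filtration-preservation, we conclude $t^{\rho}_{H}(a\wedge b)\in(\gm_{0})_{j+k}$, which is the desired statement.

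The only subtlety worth flagging is that the $\rho$-twisted vector field $X_{a}$ is \emph{not} $\pi^{1}$-projectable in general—its projected value at $H$ depends on $H$ through $F_{H}(a)$—so the bracket $[X_{a},X_{b}]$ does not descend to a bracket of vector fields downstairs. Lemma~\ref{usoara} is precisely what sidesteps this: it provides pointwise control of the bracket inside the pulled-back filtration $\mathcal{D}^{P^{1}}_{\bullet}$ without requiring projectability, which is why the argument reduces to the filtration-preservation of $F_{H}$ built into the definition of $P^{1}$.
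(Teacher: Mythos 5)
Your proof is correct and follows essentially the same route as the paper's: pull the filtration $\{\mathcal D^{P}_{i}\}$ back to $P^{1}$, observe that $X_{a}\in\Gamma(\mathcal D^{P^{1}}_{j})$ for $a\in(\gm_{0})^{j}$ because each $F_{H}$ is filtration preserving, use that the pulled-back flag is a Lie algebra filtration (the content of Lemma~\ref{usoara}, which the paper uses implicitly at this step) to place $[X_{a},X_{b}]$ in $\Gamma(\mathcal D^{P^{1}}_{j+k})$, and conclude via (\ref{homog-r1}). The closing remark about non-projectability is accurate but not needed; only the first assertion of Lemma~\ref{usoara} is used.
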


\begin{proof}
For any $i\leq 0$, let ${\mathcal D}_{i}^{P^{1}}:=
(\pi^{1})_{*}^{-1} ({\mathcal D}_{i}^{P})$.
Since for any $H\in P^{1}$, $F_{H}: \gm_{0} \ra T_{u}P $ preserves
filtrations, $X_{a}\in \Gamma ({\mathcal
D}_{i}^{P^{1}})$, for any $a\in (\gm_{0})^{i}$ ($i\leq 0$).
Similarly, $X_{b}\in\Gamma ( {\mathcal D}_{j}^{P^{1}})$ for any
$b\in (\gm_{0})^{j}$ ($j\leq 0$).
The sequence $\{
{\mathcal D}_{i}^{P^{1}},\ i\leq 0\}$ defines a filtration of the
Lie algebra ${\mathfrak X}(P^{1})$.
It follows that $[X_{a},
X_{b}]\in \Gamma ({\mathcal D}_{i+j}^{P^{1}})$ and
$t^{\rho}_{H}(a,b) = - (F_{H})^{-1}(\pi^{1})_{*} ( [X_{a},
X_{b}])_{H})$ belongs to $(\gm_{0})_{i+j}$.
\end{proof}

\begin{thm}\label{step-1} i) For any $a\wedge b\in \Lambda^{2}(\gg^{0})$
and $H\in P^{1}$, $t^{\rho}_{H}(a\wedge b) =- [a,b]$.\

ii) For any $a\wedge b\in \Lambda^{2} (\gm_{0})$ and $H\in P^{1}$,
$(t^{\rho}_{H})^{0} (a,b) = -[a,b]$.
\end{thm}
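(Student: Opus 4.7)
The plan is to evaluate $t^\rho_H(a\wedge b)=-(F_H)^{-1}(\pi^1)_*[X_a,X_b]_H$ via (\ref{homog-r1}), breaking into cases according to whether $a,b$ lie in $\gm$ or $\gg^0$. Throughout, set $u:=\pi^1(H)$.

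For part i), both $a,b\in\gg^0$. Because $F_H$ is a lift of $\hat u$ and $(\gm_0)_1=0$, we have $F_H(c)=(\xi^c)^P_u$ for every $c\in\gg^0$; hence $X_a$ is the $\rho$-horizontal lift of the fundamental vector field $(\xi^a)^P$ on $P$, and similarly for $X_b$. Both being $\pi^1$-projectable, $(\pi^1)_*[X_a,X_b]=[(\xi^a)^P,(\xi^b)^P]=(\xi^{[a,b]})^P$, and applying $F_H^{-1}$ immediately gives $t^\rho_H(a\wedge b)=-[a,b]$.

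For part ii) I distinguish three subcases. When $a,b\in\gg^0$, the statement follows from part i) since $-[a,b]\in\gg^0=(\gm_0)^0$. When $a\in\gm^i$ and $b\in\gm^j$ with $i,j<0$, I apply Lemma \ref{usoara} to $\pi^1:P^1\to P$ with the filtrations $\mathcal D^{P^1}_\bullet$ and $\mathcal D^P_\bullet$ from (\ref{TPG}). Using $X_a\in\Gamma(\mathcal D_i^{P^1})$ and the lift property $\mathrm{gr}^i((\pi^1)_*(X_a)_H)=\hat u(a)$ (and analogously for $b$), together with the Tanaka condition that $\hat u|_\gm$ is an isomorphism of symbol algebras onto $\mathrm{gr}^{<0}(T_uP)$, the lemma yields $\mathrm{gr}^{i+j}((\pi^1)_*[X_a,X_b]_H)=\{\hat u(a),\hat u(b)\}_u=\hat u([a,b])$. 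Since $F_H^{-1}$ induces $\hat u^{-1}$ on symbols, projecting to $(\gm_0)^{i+j}$ gives $(t^\rho_H)^0(a,b)=-[a,b]$.

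The main obstacle is the mixed case $a\in\gm^i$ ($i<0$), $b\in\gg^0$, where Lemma \ref{usoara} does not apply directly. My approach is to set $V_b:=X_b-(\xi^b)^{P^1}$, which is $\pi^1$-vertical (both summands project to $(\xi^b)^P$), and split $d\theta^1_H(X_a,X_b)=d\theta^1_H(X_a,(\xi^b)^{P^1})+d\theta^1_H(X_a,V_b)$. For the fundamental piece, $\theta^1((\xi^b)^{P^1})=b$ is constant, so Cartan's formula combined with the $G$-equivariance $L_{(\xi^b)^{P^1}}\theta^1=-\rho_*(b)\circ\theta^1$ from Lemma \ref{invG} yields $d\theta^1_H(X_a,(\xi^b)^{P^1})=\rho_*(b)(a)=b(a)=-[a,b]$. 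For the vertical piece, $V_b$ is the fundamental vertical field associated to the $\mathfrak{gl}_1(\gm_0)$-valued function $\phi_b:=-\omega((\xi^b)^{P^1})$ (where $\omega$ is the connection 1-form of $\rho$); using $\theta^1(V_b)=0$, $\theta^1(X_a)=a$ constant, and the commutator identity $[X_a,\xi^A]=-X_{A(a)}$ from (\ref{behaviour}), a direct computation gives $\theta^1_H([X_a,V_b])=-\phi_b(H)(a)$, and hence $d\theta^1_H(X_a,V_b)=\phi_b(H)(a)$. Since $\phi_b(H)\in\mathfrak{gl}_1(\gm_0)$ raises degree by at least one, this contribution lies in $(\gm_0)_{i+1}$, so the degree-$i$ component of $t^\rho_H(a,b)$ is exactly $-[a,b]$, completing the proof.
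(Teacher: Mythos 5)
Your proposal is correct and follows essentially the same route as the paper: horizontal lifts of fundamental vector fields for part i), Lemma \ref{usoara} together with the fact that $\hat u\vert_{\gm}$ is a symbol-algebra isomorphism for the $\Lambda^{2}(\gm)$ case, and the splitting of the twisted field into the fundamental field of the $G$-action on $P^{1}$ plus a vertical remainder (whose contribution lands in $(\gm_{0})_{i+1}$) for the mixed case. The only cosmetic differences are that you swap the roles of $a\in\gm$ and $b\in\gg^{0}$ in the mixed case and package the vertical remainder via the connection form rather than expanding it in a basis of $\mathfrak{gl}_{1}(\gm_{0})$.
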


\begin{proof} Let $a, b\in \gg^{0}.$ Then $X_{a}$, $X_{b}$ are the $\rho$-horisontal lifts of
the fundamental vector fields $(\xi^{a})^{P}$ and $(\xi^{b})^{P}$
on $P$. Thus, $[X_{a}, X_{b}]$ is $\pi^{1}$-projectable and
$(\pi^{1})_{*}[X_{a}, X_{b}] = [ (\xi^{a})^{P}, (\xi^{b})^{P}]  =
(\xi^{[a,b]})^{P}$. We obtain
$$
t^{\rho}_{H} (a, b) = - (F_{H})^{-1}(\pi^{1})_{*} ( [ X_{a},
X_{b}]_{H}) = - (F_{H})^{-1}( \xi^{[a,b]})^{P} = - [a,b].
$$
Claim i) follows.

For claim ii),  we distinguish two cases:  I) $a, b\in \gm$;
II) $a\in \gg^{0}$, $b\in \gm$.

Let  $a\in \gm^{i}$ and $b\in \gm^{j}$ ($i, j<0$). Then $X_{a} \in
\Gamma ( {\mathcal D}_{i}^{P^{1}})$, $X_{b}\in \Gamma (
{\mathcal D}_{j}^{P^{1}})$ and $[X_{a}, X_{b}]\in \Gamma (\mathcal D_{i+j}^{P^{1}})$.
Being a lift of $\hat{u}: \gm_{0} \ra
\mathrm{gr} (T_{u}P)$, the frame $F_{H}:\gm_{0} \ra T_{u}P$ is
filtration preserving and satisfies
\begin{equation}\label{frame-0}
(\mathrm{gr}^{s})^{{\mathcal D}^P}\circ
F_{H}\vert_{(\gm_{0})_{s}}= \hat{u}\circ \pi_{(\gm_{0})^{s}}
\vert_{(\gm_{0})_{s}},\ \pi_{(\gm_{0})^{s}}\circ (F_{H})^{-1}
\vert_{\mathcal D^{P}_{s}} = \hat{u}^{-1} \circ
(\mathrm{gr}^{s})^{{\mathcal D}^{P}}.
\end{equation}
Using
$(\pi^{1})_{*} ( [ X_{a}, X_{b}]_{H}) \in ({\mathcal
D}_{i+j}^{P})_{u}$ and the second relation (\ref{frame-0}), we
obtain
\begin{equation}\label{ultim0}
(t^{\rho}_{H})^{0} (a\wedge b)= - \pi_{(\gm_{0})^{i+j}}
(F_{H})^{-1} (\pi^{1})_{*} ( [ X_{a}, X_{b}]_{H})
 = - \hat{u}^{-1} \circ
(\mathrm{gr}^{i+j})^{{\mathcal D}^P}(\pi^{1})_{*} ( [ X_{a},
X_{b}]_{H}).
\end{equation}
On the other hand, from  Lemma \ref{usoara}, $(\pi^{1})_{*} (X^{a}_{H}) = F_{H}(a)$,
$(\pi^{1})_{*} (X^{b}_{H}) = F_{H}(b)$ and the first relation
(\ref{frame-0}),  we obtain
\begin{equation}\label{ultim1}
(\mathrm{gr}^{i+j})^{{\mathcal D}^P}(\pi^{1})_{*} ( [
X_{a}, X_{b}]_{H})  = \{ ( (\mathrm{gr}^{i})^{{\mathcal
D}^P}\circ  F_{H})(a),
((\mathrm{gr}^{j})^{{\mathcal D}^P}\circ  F_{H})(b)\} = \{ \hat{u}(a), \hat{u} (b)\} .
\end{equation}
Using  that $\hat{u}: \gm \ra \mathrm{gr}^{<0} (T_{u}P) $ is a Lie algebra
isomorphism, we  deduce, from (\ref{ultim0}) and
(\ref{ultim1}),  that
$(t^{\rho}_{H} )^{0}(a\wedge b)  = - [a,b]$,  as needed.

It remains to consider $a\in \gg^{0}$ and $b\in \gm .$ For this we use
the action of $G$ on $P^{1}$, defined in Subsection
\ref{act-sub-1}. From Proposition \ref{action-g0},
$(\xi^{a})^{P^{1}}$ is $\pi^{1}$-projectable and
$(\pi^{1})_{*}
(\xi^{a})^{P^{1}} = (\xi^{a})^{P}$.
Since  $a\in \gg^{0}$,  $X_{a}$ is the $\rho$-horisontal lift of $(\xi^{a})^{P}$.  Therefore, the vector
field $Y:= X_{a} - (\xi^{a})^{P^{1}}$ is $\pi^{1}$-vertical. We
write
\begin{equation}\label{comp0}
t_{H}^{\rho} (a\wedge b) = -\theta^{1} ( [X_{a}, X_{b}]_{H}) = -
\theta^{1} ( [ (\xi^{a})^{P^{1}}, X_{b}]_{H}) - \theta^{1} ( [ Y,
X_{b}]_{H}).
\end{equation}
We need to compute the right hand side of
(\ref{comp0}). From Lemma \ref{invG} and $\theta^{1} (X_{b}) =b$,
\begin{equation}\label{comput1}
\theta^{1} ( [(\xi^{a})^{P^{1}}, X_{b}]_{H}) = -
(L_{(\xi^{a})^{P^{1}}} \theta^{1})_{H} (X_{b}) = \rho_{*}(a) (b) =
a(b).
\end{equation}
In order  to compute  $\theta^{1} ( [ Y, X_{b}]_{H})$, we write
$Y= \sum_{s} f_{s} (\xi^{A_{s}})^{P^{1}}$, where $f_{s}$ are
functions on $P^{1}$ and $\{ A_{s}\}$ is a basis of $\mathfrak{gl}_{1}
(\gm_{0}).$ Then
\begin{align}
\theta^{1} ([ Y, X_{b}]_{H})\nonumber&= - \sum_{s} \theta^{1}_{H}
( X_{b}(f_{s}) (\xi^{A_{s}})^{P^{1}} + f_{s} [X_{b},
(\xi^{A_{s}})^{P^{1}}]) \\
\nonumber&= - \sum_{s} f_{s}(H) \theta^{1}_{H} ( [
X_{b},(\xi^{A_{s}})^{P^{1}}])=- \sum_{s}
f_{s}(H)L_{(\xi^{A_{s}})^{P^{1}}}(\theta^{1}) (X_{b})\\
\label{comput-a}& =  \sum_{s} f_{s}(H)A_{s}(b),
\end{align}
where in the second  equality we used that $(\xi^{A_{s}})^{P^{1}}$
is $\pi^{1}$-vertical (hence annihilated by $\theta^{1}$),
in the third equality we used that $\theta^{1} (X_{b}) =b$ is constant
and in the last equality we used the second relation (\ref{inv-GL}).
From (\ref{comp0}), (\ref{comput1}) and
(\ref{comput-a}), we obtain
\begin{equation}\label{rho-v}
t^{\rho}_{H} (a\wedge b) =  - a(b) -  A(b),\quad a\in
\gg^{0}\subset\mathfrak{gl}(\gm ),\ b\in \gm,
\end{equation}
where  $A= \sum_{s} f_{s}(H) A_{s}\in \mathfrak{gl}_{1} (\gm_{0})$ is uniquely determined
by $(X_{a})_{H} - (\xi^{a})^{P^{1}}_{H} = (\xi^{A})^{P^{1}}_{H}.$
Assume now that $b\in \gm^{i}$. From (\ref{rho-v}),
$t^{\rho}_{H}(a\wedge b)\in (\gm_{0})_{i}$ and (by projecting
(\ref{rho-v}) onto $\gm^{i}$) $(t^{\rho}_{H})^{0} (a, b) = - a(b)=
- [a, b]$ as required.
\end{proof}

\subsection{Variation of the torsion $t^{\rho}$ of
$\pi^{1}$}\label{var-torsion}

Let $\rho$ be a connection on $\pi^{1} : P^{1} \ra P$.

\begin{prop}\label{var-tor-comp} i) The degree  zero homogeneous component of $t^{\rho}: P^{1} \ra
\mathrm{Hom} (\gg^{0}\wedge \gm , \gm_{0})$ is independent of
$\rho$.\

ii) The degree zero and one homogeneous components of   $t^{\rho}:
P^{1} \ra \mathrm{Hom} (\Lambda^{2} (\gm ), \gm )$ are independent
of $\rho$.
\end{prop}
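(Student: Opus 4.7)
The plan is to deduce the proposition directly from the variation formula of Theorem \ref{invariancy} ii). Given two connections $\rho, \rho^{\prime}$ on the $G$-structure $\pi^{1}: P^{1}\ra P$, I have
$$
t^{\rho^{\prime}}_{H}(a\wedge b) - t^{\rho}_{H}(a\wedge b) = -A(b) + B(a),
$$
where $A, B$ lie in the Lie algebra of the structure group $GL_{1}(\gm_{0})$ of $\pi^{1}$, i.e.\ $A, B\in \mathfrak{gl}_{1}(\gm_{0}) = \sum_{s\geq 1}\mathfrak{gl}^{s}(\gm_{0})$. The decisive observation is that every element of $\mathfrak{gl}_{1}(\gm_{0})$ raises the grading strictly; since $\gm_{0}= \sum_{i=-k}^{-1}\gm^{i}+\gg^{0}$ has no components of positive degree, this already forces $A(\gg^{0})\subset (\gm_{0})_{1}= \{ 0\}$, while in general $A(\gm^{i})\subset (\gm_{0})_{i+1}$, and analogously for $B$.

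For part i), I take $a\in \gg^{0}$ and $b\in \gm^{i}$, so the degree zero homogeneous component of the variation is the degree $i$ part of $-A(b)+B(a)\in \gm_{0}$. The term $B(a)$ vanishes identically by the observation above, and $A(b)$, being supported in degrees $\geq i+1$, has no degree $i$ component. Hence the degree zero part of $t^{\rho}$ on $\gg^{0}\wedge \gm$ is connection-independent.

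For part ii), I take $a\in \gm^{i}$ and $b\in \gm^{j}$ with $i, j< 0$ and look at the degree $s$ homogeneous component for $s\in \{ 0, 1\}$, i.e.\ the degree $i+j+s$ part of $-A(b)+B(a)$. A contribution from $A(b)\in (\gm_{0})_{j+1}$ requires $i+j+s\geq j+1$, equivalently $i\geq 1-s$; for $s\in \{ 0, 1\}$ and $i\leq -1$ this is impossible, so $A(b)$ contributes nothing, and by the symmetric argument neither does $B(a)$. Thus the degree zero and degree one components of $t^{\rho}$ on $\Lambda^{2}(\gm)$ are both independent of $\rho$.

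The argument is essentially a degree-counting exercise once Theorem \ref{invariancy} ii) is applied, and presents no substantive obstacle. The only nontrivial ingredient is the (easy but essential) remark that $\mathfrak{gl}_{1}(\gm_{0})$ strictly raises degree, combined with the fact that $\gm_{0}$ has no positive-degree components, which together kill $B(a)$ in part i) and exclude the relevant degree in part ii).
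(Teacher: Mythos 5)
Your proof is correct and follows essentially the same route as the paper: apply the variation formula of Theorem \ref{invariancy} ii) with $A,B\in\mathfrak{gl}_{1}(\gm_{0})$ and observe that these strictly raise degree, so that $B(a)=0$ for $a\in\gg^{0}$ and the relevant homogeneous components of $A(b)$ and $B(a)$ vanish by degree counting. No gaps.
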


\begin{proof} Let $\rho^{\prime}$ be another connection on $\pi^{1}.$
For any $a\in \gm_{0}$, the $\rho$ and $\rho^{\prime}$-twisted
vector fields $X_{a}$ and $X^{\prime}_{a}$, at a point $H\in
P^{1}$, are related by $(X_{a}^{\prime})_{H} = (X_{a})_{H}
+(\xi^{A})^{P^{1}}_{H}$, where $A\in \ggl_{1}(\gm_{0})$
(the Lie algebra of the structure group $GL_{1}(\gm_{0})$ of $\pi^{1}$).
Similarly, for any $b\in \gm_{0}$, $(X_{b}^{\prime})_{H} =
(X_{b})_{H} +(\xi^{B})^{P^{1}}_{H}$, where $B\in
\ggl_{1}(\gm_{0})$. From Theorem \ref{invariancy},
\begin{equation}\label{t-rho-prime}
t^{\rho^{\prime}}_{H}(a\wedge b) =  t^{\rho}_{H}(a\wedge b) - A (
b) + B( a ).
\end{equation}
Let $a\in \gg^{0}$ and $b\in \gm^{i}$ ($i < 0$). Then $B(a)=0$,
$\mathrm{deg} (A( b ) )\geq i+1$. We obtain that the $\gm^{i}$-component of $A(b) - B(a)$ vanishes.
Claim i) follows. Let $a\in \gm^{i}$
and $b\in \gm^{j}$ with $i, j < 0$. Then $\mathrm{deg} (A( b )
)\geq j+1 >  i+j+1$ and $\mathrm{deg} (B(a ) )\geq i+1 > i+j+1$. We obtain that
the $\gm^{i+j}$ and $\gm^{i+j+1}$-components of
$A(b) - B(a)$ vanish. Claim ii) follows.
\end{proof}

We denote by $\mathrm{Tor} (\gm_{0} ) := \mathrm{Hom}
(\Lambda^{2}(\gm ), \gm_{0})$ the {\bf space of torsions}. It is a
graded vector space, with gradation $\mathrm{Tor}^{m} (\gm_{0} )
=\sum_{i,j} \mathrm{Hom} (\gm^{i}\wedge \gm^{j},(\gm_{0})^{i+j+m}).$
For any $H\in P^{1}$, we denote by $(t^{\rho}_{H})^{m}$ the
projection of $t^{\rho}_{H}$ onto $\mathrm{Tor}^{m} (\gm_{0} ).$

\begin{defn}\label{def-torsion-tan} Let $\rho$ be a connection on the $G$-structure $\pi^{1}: P^{1} \ra P$
associated to the Tanaka structure $\pi_{G} : P_{G}\rightarrow M.$
The function
$$
t^{1}: P^{1}\rightarrow \mathrm{Tor}^{1} (\gm_{0} ),\quad P^{1}
\ni H \ra t^{1}_{H}:=(t^{\rho}_{H})^{1}\in
\mathrm{Tor}^{1}(\gm_{0})
$$
is called the {\bf torsion function of the Tanaka structure
$({\mathcal D}_{i}, \pi_{G})$}.  \end{defn}

\begin{prop} The torsion function  is independent of the choice of $\rho$.
It is given by:
\begin{equation}\label{1-deg}
t^{1}_{H}(a,b) = - \pi_{\gm^{i+j+1}}(F_{H})^{-1} (\pi^{1})_{*}  ( [X_{a},
X_{b}]_{H}),\quad H\in P^{1},\ a\in \gm^{i},\ b\in
\gm^{j},\ (i,j<0),
\end{equation}
where $X_{a}$, $X_{b}\in {\mathcal X}(P^{1})$ are $\rho$-twisted vector fields.

\end{prop}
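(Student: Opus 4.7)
The plan is to derive both assertions directly from Proposition \ref{var-tor-comp} and the general torsion formula (\ref{homog-r1}), without any fresh computation. First I would record the elementary grading observation that for any $a\in\gm^{i}$ and $b\in\gm^{j}$ with $i,j\leq -1$, the inequality $i+j+1\leq -1<0$ forces $(\gm_{0})^{i+j+1}=\gm^{i+j+1}\subset\gm$. This places the degree-$1$ homogeneous component of $t^{\rho}_{H}$ restricted to $\Lambda^{2}(\gm)$ squarely within the scope of Proposition \ref{var-tor-comp}(ii), which asserts that this component does not depend on $\rho$. Hence $t^{1}_{H}:=(t^{\rho}_{H})^{1}\in\mathrm{Tor}^{1}(\gm_{0})$ is well-defined, independently of the chosen connection; this settles the first assertion.

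For the explicit formula, I would specialise (\ref{homog-r1}) to the present $G$-structure $\pi^{1}:P^{1}\to P$. Since the soldering form is given by $\theta^{1}_{H}=(F_{H})^{-1}\circ(\pi^{1})_{*}$, one obtains
$$
t^{\rho}_{H}(a\wedge b)=-(F_{H})^{-1}(\pi^{1})_{*}\bigl([X_{a},X_{b}]_{H}\bigr)\in\gm_{0},
$$
for any $\rho$-twisted vector fields $X_{a}$, $X_{b}$. Applying the projection $\pi_{\gm^{i+j+1}}:\gm_{0}\to\gm^{i+j+1}$ to both sides extracts exactly the degree-$1$ homogeneous component, namely $t^{1}_{H}(a,b)$, and yields (\ref{1-deg}).

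No substantive obstacle arises: both parts of the proposition follow by unwinding the definitions already in place. The only bookkeeping point worth flagging is the aforementioned check that $i+j+1<0$ whenever $i,j\leq -1$, which is precisely what aligns the target space $(\gm_{0})^{i+j+1}$ appearing in the definition of $\mathrm{Tor}^{1}(\gm_{0})$ with the range $\gm^{i+j+1}$ covered by Proposition \ref{var-tor-comp}(ii), so that the independence statement there really applies to the quantity entering the definition of $t^{1}$.
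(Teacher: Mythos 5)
Your argument is correct and follows the paper's own proof essentially verbatim: the independence of $\rho$ is exactly Proposition \ref{var-tor-comp}~ii) (your observation that $i+j+1<0$ forces the degree-one component to be $\gm$-valued, so that the proposition applies, is the right bookkeeping), and formula (\ref{1-deg}) is obtained by specialising (\ref{homog-r1}) to $\pi^{1}$ and projecting onto $\gm^{i+j+1}$. Nothing further is needed.
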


\begin{proof}
The first claim follows from Proposition \ref{var-tor-comp} ii).
Relation (\ref{1-deg}) follows from (\ref{homog-r1}).
\end{proof}

\begin{prop}\label{prel-prol} For any $H\in P^{1}$ and $A= \mathrm{Id}
+A_{1}\in GL_{1} (\gm_{0})$,
$$
t^{1}_{HA}=  t^{1}_{H} + \partial A .
$$
Above, $\partial A\in \mathrm{Hom} (\Lambda^{2}\gm , \gm_{0})$ is
given  by
\begin{equation}
(\partial A )(a\wedge b):=  A^{1}_{1} ([a,b] ) - [A^{1}_{1}(a), b] -
[a, A^{1}_{1}(b)],\ a\wedge b\in \Lambda^{2} (\gm ).
\end{equation}
\end{prop}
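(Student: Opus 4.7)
The plan is to obtain Proposition \ref{prel-prol} as a direct consequence of the $GL_{1}(\gm_{0})$-equivariance of $t^{\rho}$, combined with the two structural facts about $t^{\rho}$ already established, namely Proposition \ref{non-neg} (vanishing of the negative-degree components) and Theorem \ref{step-1} ii) (the degree-$0$ component equals $-[\cdot ,\cdot ]$ on all of $\Lambda^{2}(\gm_{0})$). Since $t^{1}$ was shown to be connection-independent, I may fix any connection $\rho$ and compute $t^{1}_{HA}$ as the degree $1$ component of $t^{\rho}_{HA}$.

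First I would apply Theorem \ref{invariancy} i) with $g=A=\mathrm{Id}+A_{1}\in GL_{1}(\gm_{0})$ to write
\begin{equation*}
t^{\rho}_{HA}(a\wedge b)=A^{-1}\bigl(t^{\rho}_{H}(A(a)\wedge A(b))\bigr),\quad a\in\gm^{i},\ b\in\gm^{j}.
\end{equation*}
Then I would expand both $A$ and $A^{-1}=\mathrm{Id}-A_{1}+A_{1}^{2}-\cdots$ into homogeneous components, keeping in mind that $A_{1}^{s}:\gm^{r}\to(\gm_{0})^{r+s}$. The plan is to collect everything that contributes to the homogeneous degree $i+j+1$, discarding systematically anything of degree $\geq i+j+2$. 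In $A(a)\wedge A(b)$ there are only three contributions to degrees $i+j$ and $i+j+1$: $a\wedge b$ itself (degree $i+j$), and $A_{1}^{1}(a)\wedge b$ and $a\wedge A_{1}^{1}(b)$ (both of degree $i+j+1$); all other cross terms involve $A_{1}^{\geq 2}$ or two factors of $A_{1}$, hence live in degree $\geq i+j+2$.

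Next I would apply $t^{\rho}_{H}$, using Proposition \ref{non-neg} to drop the negative-degree components and Theorem \ref{step-1} ii) to evaluate the degree-$0$ component on $\Lambda^{2}(\gm_{0})$. This gives
\begin{equation*}
t^{\rho}_{H}(A(a)\wedge A(b))=-[a,b]+\bigl(t^{1}_{H}(a\wedge b)-[A_{1}^{1}(a),b]-[a,A_{1}^{1}(b)]\bigr)+(\text{degree }\geq i+j+2),
\end{equation*}
where I crucially use that $A_{1}^{1}(a)$ may lie in $\gg^{0}$ (when $i=-1$), which is exactly why one needs Theorem \ref{step-1} ii) in its full form on $\Lambda^{2}(\gm_{0})$, not only on $\Lambda^{2}(\gm)$. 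Finally, applying $A^{-1}=\mathrm{Id}-A_{1}^{1}+\cdots$ sends the leading term $-[a,b]$ to $-[a,b]+A_{1}^{1}([a,b])$ modulo higher degrees, while leaving the degree-$(i+j+1)$ piece above unchanged at that order. Extracting the homogeneous component of degree $i+j+1$ yields
\begin{equation*}
t^{1}_{HA}(a\wedge b)=t^{1}_{H}(a\wedge b)+A_{1}^{1}([a,b])-[A_{1}^{1}(a),b]-[a,A_{1}^{1}(b)],
\end{equation*}
which is the claimed identity.

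The main obstacle is purely bookkeeping: organizing the expansion so that one sees, at a glance, that only the three listed terms matter, and that the three of them produce exactly the three summands of $\partial A$. Provided one consistently uses the degree shifts of $A_{1}^{s}$ and of $A^{-1}$, and Theorem \ref{step-1} ii) on the full $\Lambda^{2}(\gm_{0})$, nothing beyond this controlled expansion is needed.
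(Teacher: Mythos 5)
Your proposal is correct and follows essentially the same route as the paper's own proof: apply the $G$-equivariance formula $t^{\rho}_{HA}=A^{-1}t^{\rho}_{H}(A\cdot\wedge A\cdot)$, project onto degree $i+j+1$, discard everything of degree $\geq i+j+2$ via Proposition \ref{non-neg}, and evaluate the surviving degree-$0$ pieces with Theorem \ref{step-1} ii) (including the mixed case $\gg^{0}\wedge\gm$, which you correctly flag as the reason the full $\Lambda^{2}(\gm_{0})$ version is needed). The only cosmetic difference is that the paper writes $A^{-1}=\mathrm{Id}+\tilde{A}_{1}$ with $\tilde{A}_{1}^{1}=-A_{1}^{1}$ rather than expanding the Neumann series, but the bookkeeping is identical.
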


\begin{proof}
The inverse  $A^{-1}$ of $A$  is of the form $A^{-1} =
\mathrm{Id} + \tilde{A}_{1}$, where $\tilde{A}_{1} \in
\mathfrak{gl}_{1} (\gm_{0})$ and  $\tilde{A}^{1}_{1} = -
A_{1}^{1}$. We choose a connection $\rho$ on
$\pi^{1}.$ From Theorem \ref{invariancy}, for any $a\wedge b\in
\Lambda^{2}(\gm )$,
\begin{align*}
&t^{\rho}_{HA} (a\wedge b) = A^{-1} t^{\rho}_{H}(A(a)\wedge
A (b))\\
&= t^{\rho}_{H} (a\wedge b) + t^{\rho}_{H} (a\wedge
{A}_{1}(b)) + t^{\rho}_{H} ({A}_{1}(a)\wedge b) +
t^{\rho}_{H}
({A}_{1}(a)\wedge {A}_{1}(b))\\
&+ \tilde{A}_{1} \left( t^{\rho}_{H} (a\wedge b) + t^{\rho}_{H} (a\wedge
{A}_{1}(b)) + t^{\rho}_{H} ({A}_{1}(a)\wedge b) +
t^{\rho}_{H} ({A}_{1}(a)\wedge {A}_{1}(b))\right) .
\end{align*}
Let $a\in \gm^{i}$ and $b\in \gm^{j}$ ($i, j<0$).
Projecting the above equality to $\gm^{i+j+1}$ and using that
$t^{\rho}$ has only components of non-negative homogeneous degree
(see Proposition \ref{non-neg}),  we obtain
$$
t^{1}_{HA} (a\wedge b) = t^{1}_{H}(a\wedge b) + (t^{\rho}_{H})^{0}
(a\wedge {A}_{1}^{1}(b)) + (t^{\rho}_{H})^{0}
({A}_{1}^{1}(a)\wedge b) + \tilde{A}_{1}^{1} t^{0}_{H}(a\wedge b).
$$
Using $\tilde{A}_{1}^{1} = - A_{1}^{1}$ and
$(t^{\rho}_{H})^{0}(a\wedge b) = - [a,b]$, for any $a,b\in \gm_{0}$
(see Theorem \ref{step-1}), we obtain our claim.
\end{proof}

\subsection{The first prolongation}\label{first-prol-sect}

Let $(\mathcal D_{i} , \pi : P_{G}\rightarrow M)$ be a Tanaka
$G$-structure of type $\gm = \sum_{i=-k}^{-1}\gm^{i}$
and $t^{1}: P^{1}
\rightarrow\mathrm{Tor}^{1} (\gm_{0})$ its torsion function
(see Definition \ref{def-torsion-tan}). Let
\begin{equation}\label{def-partial}
\partial : \ggl_{1} (\gm_{0}) \ra \mathrm{Tor}^{1} (\gm_{0}),\
(\partial A)(a\wedge b) =  A^{1}( [a,b] ) - [A^{1}(a), b] -  [a,
A^1 (b)],\ a\wedge b\in\Lambda^{2} ( \gm ).
\end{equation}
Fix a complement $W$ of $\partial  (\ggl_{1} (\gm_{0}))$ in
$\mathrm{Tor}^{1}(\gm_{0}).$

\begin{prop}\label{tilde-1} The bundle
$\tilde{\pi}^{1}:\tilde{P}^{1}:=(t^{1})^{-1}(W) \rightarrow P$ is
a $G$-structure with structure group $G^{1} GL_{2}(\gm_{0})$. 
The torsion function $t^{\tilde{\rho}}$ of any connection $\tilde{\rho}$ on 
$\tilde{\pi}^{1}$ satisfies $t^{\tilde{\rho}}_{H}(a\wedge b) \in \gm^{-1} + \gg^{0}$, for any
$H\in \tilde{P}^{1}$ and $a\wedge b \in \gm^{-1} \wedge \gg^{0}$,  and  
$$
(t^{\tilde{\rho}}_{H})^{0} (a\wedge b) = - [a,b],\ a\wedge b\in
\gm^{-1}\wedge \gg^{0}.
$$
\end{prop}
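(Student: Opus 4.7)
The plan is to exploit the transformation formula $t^{1}_{HA}= t^{1}_{H}+\partial A$ from Proposition \ref{prel-prol} to identify $\tilde{P}^{1}$ as a principal subbundle of $\pi^{1}$ with structure group $G^{1}GL_{2}(\gm_{0})$, and then to deduce the torsion assertions from those of $t^{\rho}$ on $P^{1}$ already established in Theorem \ref{step-1} and Proposition \ref{non-neg}, via an extension of $\tilde{\rho}$ to a connection on $\pi^{1}$.

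For the structure-group reduction, I would first check that $\tilde{P}^{1}$ meets every fibre of $\pi^{1}$: given $H\in P^{1}$, write $t^{1}_{H}=w-\partial C$ with $w\in W$ and $C\in\mathfrak{gl}^{1}(\gm_{0})$; since $\partial$ depends only on the degree-one component of its argument, $t^{1}_{H(\mathrm{Id}+C)}=w\in W$, so $H(\mathrm{Id}+C)\in\tilde{P}^{1}$. Next, to pin down the structure group, suppose $H_{1},H_{2}\in\tilde{P}^{1}$ lie in the same fibre with $H_{2}=H_{1}B$, $B=\mathrm{Id}+A\in GL_{1}(\gm_{0})$. Then $\partial B=t^{1}_{H_{2}}-t^{1}_{H_{1}}\in W\cap\partial(\mathfrak{gl}^{1}(\gm_{0}))=\{0\}$, so the degree-one component $A^{1}$ lies in $\ker(\partial|_{\mathfrak{gl}^{1}(\gm_{0})})=\gg^{1}$, i.e.\ $B\in G^{1}GL_{2}(\gm_{0})$. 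Conversely, every $B\in G^{1}GL_{2}(\gm_{0})$ satisfies $\partial B=0$ and hence preserves $\tilde{P}^{1}$. Since the $GL_{1}(\gm_{0})$-action on $P^{1}$ is free and $A\mapsto\partial A$ has rank equal to $\dim\partial(\mathfrak{gl}^{1}(\gm_{0}))$, the map $t^{1}$ is transverse to $W$ along $\tilde{P}^{1}$, and hence $\tilde{P}^{1}$ is a smooth principal $G^{1}GL_{2}(\gm_{0})$-subbundle.

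For the torsion statements, I would extend $\tilde{\rho}$ to a connection $\rho$ on $\pi^{1}$ with $\rho_{H}=\tilde{\rho}_{H}$ at each $H\in\tilde{P}^{1}$ (possible by a standard equivariant extension argument, since $\tilde{\rho}_{H}\subset T_{H}\tilde{P}^{1}\subset T_{H}P^{1}$ has the correct dimension to be a horizontal subspace for $\pi^{1}$). Then each $\rho$-twisted vector field $X_{a}$ is tangent to $\tilde{P}^{1}$ along $\tilde{P}^{1}$ and restricts there to the $\tilde{\rho}$-twisted vector field $\tilde{X}_{a}$; since the Lie bracket of two vector fields tangent to a submanifold coincides with the intrinsic Lie bracket on it, $t^{\tilde{\rho}}_{H}=t^{\rho}_{H}$ for every $H\in\tilde{P}^{1}$. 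The inclusion $t^{\tilde{\rho}}_{H}(a\wedge b)\in\gm^{-1}+\gg^{0}=(\gm_{0})_{-1}$ for $a\wedge b\in\gm^{-1}\wedge\gg^{0}$ is then immediate from Proposition \ref{non-neg} (non-negativity of homogeneous degrees), and the formula $(t^{\tilde{\rho}}_{H})^{0}(a\wedge b)=-[a,b]$ for such $a\wedge b$ follows from Theorem \ref{step-1}~ii).

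The main obstacle is the bookkeeping in the first part, specifically the role of the degree-$\geq 2$ components: one must use crucially that $\partial$ factors through the degree-one component of its argument, which is exactly what makes $GL_{2}(\gm_{0})$ sit automatically inside the structure group and isolates $G^{1}$ as the indeterminacy of the reduction in degree one. The remaining content of the proposition is essentially a direct inheritance from properties of $\pi^{1}$ once the connection-extension step is handled.
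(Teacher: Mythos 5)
Your argument is correct and follows essentially the same route as the paper's (very terse) proof: the reduction to $G^{1}GL_{2}(\gm_{0})$ via Proposition \ref{prel-prol} together with $\mathrm{Ker}(\partial)=\gg^{1}+\ggl_{2}(\gm_{0})$, and the torsion assertions by extending $\tilde{\rho}$ equivariantly to a connection on $\pi^{1}$ and quoting Proposition \ref{non-neg} and Theorem \ref{step-1}~ii). The extra details you supply (fibrewise surjectivity of the reduction, the stabilizer computation, and the tangency argument identifying $t^{\tilde{\rho}}$ with $t^{\rho}$ along $\tilde{P}^{1}$) are exactly what the paper leaves implicit.
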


\begin{proof}
The first  claim follows from Proposition \ref{prel-prol} and
 $\mathrm{Ker}(\partial ) = \ggl_{2}
(\gm_{0}) + \gg^{1}.$ The second claim follows from Proposition
\ref{non-neg} and Theorem \ref{step-1} (extend $\tilde{\rho}$ to a connection  on $\pi^{1}$).
\end{proof}

Let $\bar{P}^{(1)}:= \tilde{P}^{1}/ GL_{2} (\gm_{0})$. The map
$\bar{\pi}^{(1)}: \bar{P}^{(1)}
\rightarrow P$
induced by $\tilde{\pi}^{1}$ is a principal bundle with  structure
group $G^{1}.$

\begin{defn}\label{first-prol-def} The principal $G^{1}$-bundle $\bar{\pi}^{(1)}: \bar{P}^{(1)}
\rightarrow P$ is called the  {\bf first prolongation of the
Tanaka structure $(\mathcal D_{i}, \pi_{G})$}. \end{defn}

The next proposition concludes the first induction step from the proof of Theorem \ref{sf0}.

\begin{prop}\label{caract-quasi}
The principal bundle $\bar{\pi}^{(1)}: \bar{P}^{(1)} \ra P$ satisfies
properties A), B) and C) from Theorem \ref{sf0}. In particular, it
is canonically isomorphic to a subbundle of the bundle
$\mathrm{Gr}_{2} (TP)\rightarrow P$ of $2$-quasi-gradations of
$TP.$
\end{prop}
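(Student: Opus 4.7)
The plan is to verify properties A), B), C) in order, simply assembling results established in the preceding subsections; the role of the proposition is essentially to package Propositions \ref{hat}, \ref{pi-1}, and \ref{tilde-1} together with Theorem \ref{lifts-adaugat} ii) into the format demanded by Theorem \ref{sf0}.

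For property A), at level $n=1$ we need a Tanaka $\{e\}$-structure of type $\gm_0$ on $\bar{P}^{(0)} = P$. The filtration $\{\mathcal{D}^P_i\}$ of (\ref{TPG}) supplies the flag of distributions, and its non-positive part is a filtration of $\mathfrak{X}(P)$ by construction. The canonical isomorphism $I_u := \hat{u}$ of Proposition \ref{hat} is the required graded frame; its restriction to $\gm$ equals $(\pi_G)_*^{-1}\circ u|_{\gm}$, which is a Lie algebra isomorphism onto $\mathrm{gr}^{<0}(T_u P)$ because $u \in P_G$ is itself a graded frame on $M$ and the symbol algebra isomorphism (\ref{symbol-alg}) transports the Lie bracket. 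For property B), Proposition \ref{pi-1} identifies $\pi^{1}$ as the $GL_{1}(\gm_0)$-structure of adapted gradations of $TP$, Proposition \ref{tilde-1} exhibits $\tilde{\pi}^{1}$ as the required $G^{1} GL_{2}(\gm_0)$-reduction, and Definition \ref{first-prol-def} defines $\bar{\pi}^{(1)}$ as the quotient by the normal subgroup $GL_{2}(\gm_0)$. Property C) at $n=1$ involves only the index $i=0$, in which case $(\gm_0)_{i-1}=\gm^{-1}+\gg^{0}$, and the assertion becomes verbatim the second statement of Proposition \ref{tilde-1}.

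The only point requiring a brief argument is the canonical realization of $\bar{P}^{(1)}$ as a subbundle of $\mathrm{Gr}_{2}(TP)$. I would apply Theorem \ref{lifts-adaugat} ii) fiberwise: the map $\Pi^{2}: \mathrm{Gr}(T_u P) \to \mathrm{Gr}_{2}(T_u P)$ descends to a bijection between $\mathrm{Gr}(T_u P)/GL_{2}(\gm_0)$ and $\mathrm{Gr}_{2}(T_u P)$. Since the $GL_{2}(\gm_0)$-action used to form $\bar{P}^{(1)}$ is the restriction of this simply transitive action to the $GL_{2}(\gm_0)$-invariant subset $\tilde{P}^{1}_u \subset \mathrm{Gr}(T_u P)$, restricting $\Pi^{2}$ yields an injective, fiberwise smooth map $\bar{P}^{(1)} \hookrightarrow \mathrm{Gr}_{2}(TP)$ over $P$, realizing $\bar{P}^{(1)}$ as a subbundle. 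The only mild obstacle is the bookkeeping required to match the various actions and quotients; once Theorem \ref{lifts-adaugat} is in hand, this is routine.
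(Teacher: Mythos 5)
Your proposal is correct and follows essentially the same route as the paper: the paper's proof likewise cites Proposition \ref{hat} for property A), the definition of $\bar{\pi}^{(1)}$ together with Proposition \ref{tilde-1} for properties B) and C), and Theorem \ref{lifts-adaugat} ii) (applied fiberwise) for the identification with a subbundle of $\mathrm{Gr}_{2}(TP)$. Your write-up merely spells out the bookkeeping the paper leaves implicit.
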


\begin{proof}
From Proposition \ref{hat}, property A) is satisfied. Properties B) and C) follow from
the definition of $\bar{\pi}^{(1)}$ and 
Proposition \ref{tilde-1}.
The statement about quasi-gradations
follows from Theorem \ref{lifts-adaugat} ii).
\end{proof}

\section{The $G$-structure  $\pi^{n+1}:{P}^{n+1}\ra
\bar{P}^{(n)}$}\label{pi-n}

We now assume that the principal bundles $\bar{\pi}^{(i)}: \bar{P}^{(i)}
\ra\bar{P}^{(i-1)}$ from Theorem \ref{sf0} are given, for any
$i\leq n.$ Our goal is to construct the principal bundle
$\bar{\pi}^{(n+1)} : \bar{P}^{(n+1)} \ra \bar{P}^{(n)}$ from this
theorem. In particular,
$\bar{P}^{(n)}$ needs to have a Tanaka $\{ e\}$-structure of type
$\gm_{n}.$ This is induced from $\bar{P}^{(n-1)}$, as follows.

\begin{lem}\label{first-pin}
The manifold $\bar{P}^{(n)}$ has a Tanaka $\{ e\}$-structure of
type $\gm_{n}.$ The flag of distributions $\{\bar{\mathcal
D}^{(n)}_{i},\ -k\leq i\leq n\}$ of this Tanaka structure is  $\bar{\mathcal
D}^{(n)}_{i}:= (\bar{\pi}^{(n)})_{*}^{-1} (\bar{\mathcal
D}^{(n-1)}_{i})$ $(-k\leq i\leq n-1)$ and $\bar{\mathcal
D}^{(n)}_{n}:= T^{v} \bar{P}^{(n)}=\mathrm{Ker}  (\bar{\pi}^{(n)})_{*}$. For any $\bar{H}^{n} \in
\bar{P}^{(n)}$, the canonical graded frame
$$
I_{\bar{H}^{n}}: \gm_{n}=\gm_{n-1} + \gg^{n}\ra \mathrm{gr}
(T_{\bar{H}^{n}}\bar{P}^{(n)})= \sum_{-k\leq i\leq n-1}
\mathrm{gr}^{i}(T_{\bar{H}^{n}} \bar{P}^{(n)} )+
T^{v}_{\bar{H}^{n}} \bar{P}^{(n)}
$$
is given by
\begin{equation}\label{isom}
I_{\bar{H}^{n}}\vert_{\gm_{n-1}} :=
(\bar{\pi}^{(n)})_{*}^{-1}\circ   I_{\bar{H}^{n-1}}, \quad
I_{\bar{H}^{n}}\vert_{\gg^{n}} :=\nu^{n}_{\bar{H}^{n}},
\end{equation}
where
$$
(\bar{\pi}^{(n)})_{*} :  \sum_{-k\leq i\leq n-1} \mathrm{gr}^{i}(T_{\bar{H}^{n}}
\bar{P}^{(n)} )\ra \mathrm{gr}(T_{\bar{H}^{n-1}}\bar{P}^{(n-1)})
$$
is the isomorphism  induced by  the differential of
$\bar{\pi}^{(n)}$,
$\bar{H}^{n-1} = \bar{\pi}^{(n)} (\bar{H}^{n})$, 
and $\nu^{n}_{\bar{H}^{n}}: \gg^{n} \ra
T^{v}_{\bar{H}^{n}} \bar{P}^{(n)}$ is the vertical parallelism of $\bar{\pi}^{(n)}.$

\end{lem}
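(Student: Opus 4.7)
The plan is to verify in turn the three claims packed into the statement: (a) the sequence $\{\bar{\mathcal D}^{(n)}_i,\ -k\leq i\leq n\}$ is a flag of distributions whose non-positive part filters the Lie algebra ${\mathfrak X}(\bar P^{(n)})$, (b) formula (\ref{isom}) defines a graded vector space isomorphism $I_{\bar H^n}:\gm_n\to \mathrm{gr}(T_{\bar H^n}\bar P^{(n)})$, and (c) the restriction $I_{\bar H^n}|_\gm$ is a Lie algebra isomorphism onto $\mathrm{gr}^{<0}(T_{\bar H^n}\bar P^{(n)})$. The whole argument is inductive and essentially reduces to applying Lemma \ref{usoara} to the submersion $f=\bar\pi^{(n)}$ together with the inductive hypothesis that $\bar P^{(n-1)}$ already carries a Tanaka $\{e\}$-structure of type $\gm_{n-1}$.

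For (a), I would note that for every $i\leq 0$ (in particular $i\leq n-1$, since $n\geq 1$) one has by definition $\bar{\mathcal D}^{(n)}_i=(\bar\pi^{(n)})_*^{-1}(\bar{\mathcal D}^{(n-1)}_i)$, and the inductive hypothesis supplies the bracket condition $[\Gamma(\bar{\mathcal D}^{(n-1)}_i),\Gamma(\bar{\mathcal D}^{(n-1)}_j)]\subset\Gamma(\bar{\mathcal D}^{(n-1)}_{i+j})$ for $i,j\leq 0$. Lemma \ref{usoara} then transports this to $\bar P^{(n)}$.

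For (b), I would check that the differential $(\bar\pi^{(n)})_*$ induces, for each $-k\leq i\leq n-1$, a linear isomorphism $\mathrm{gr}^i(T_{\bar H^n}\bar P^{(n)})\to \mathrm{gr}^i(T_{\bar H^{n-1}}\bar P^{(n-1)})$. For $i\leq n-2$ this is immediate from the pullback definition of $\bar{\mathcal D}^{(n)}_i$ and $\bar{\mathcal D}^{(n)}_{i+1}$; at the boundary $i=n-1$ it uses that $\bar{\mathcal D}^{(n)}_n=\mathrm{Ker}(\bar\pi^{(n)})_*$ has precisely been quotiented out. Composing these isomorphisms with the inductive frame $I_{\bar H^{n-1}}$ gives $I_{\bar H^n}|_{\gm_{n-1}}$, while on the top piece $\gg^n$ the vertical parallelism $\nu^n$ of the principal $G^n$-bundle $\bar\pi^{(n)}$ identifies $\gg^n$ with $T^v_{\bar H^n}\bar P^{(n)}=\mathrm{gr}^n(T_{\bar H^n}\bar P^{(n)})$. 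Assembling these pieces gives the asserted graded vector space isomorphism.

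For (c), I would observe that $I_{\bar H^n}|_\gm=(\bar\pi^{(n)})_*^{-1}\circ I_{\bar H^{n-1}}|_\gm$ by construction, and $I_{\bar H^{n-1}}|_\gm$ is a symbol algebra isomorphism onto $\mathrm{gr}^{<0}(T_{\bar H^{n-1}}\bar P^{(n-1)})$ by the inductive hypothesis. The bracket identity of Lemma \ref{usoara} ensures that the isomorphism $(\bar\pi^{(n)})_*$ between the negative parts of the symbol algebras of $\bar P^{(n)}$ and $\bar P^{(n-1)}$ is a Lie algebra isomorphism; composing with the inductive one yields (c). I do not anticipate a genuine obstacle here; the only mildly delicate step is the index bookkeeping at $i=n-1$, where one has to recognize that identifying $\bar{\mathcal D}^{(n)}_n$ with the vertical bundle is exactly what makes $(\bar\pi^{(n)})_*$ an isomorphism on $\mathrm{gr}^{n-1}$.
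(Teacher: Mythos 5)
Your proposal is correct and follows essentially the same route as the paper: the authors also declare the filtration and graded-isomorphism claims to be routine and isolate the Lie-bracket preservation of $I_{\bar{H}^{n}}\vert_{\gm}$ as the only non-trivial point, which they settle exactly as you do in part (c), using that both the symbol-level map induced by $(\bar{\pi}^{(n)})_{*}$ (via Lemma \ref{usoara}) and the inductive frame $I_{\bar{H}^{n-1}}\vert_{\gm}$ are Lie algebra isomorphisms. Your write-up simply makes explicit the bookkeeping (including the boundary case $i=n-1$) that the paper leaves implicit.
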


\begin{proof}
The only non-trivial fact to check is that $I_{\bar{H}^{n}}: \gm \ra \mathrm{gr}^{<0}(T_{\bar{H}^{n}}\bar{P}^{(n)})$ preserves Lie brackets. For this, we use that
both $ (\bar{\pi}^{(n)})_{*} :  \mathrm{gr}^{<0}(T_{\bar{H}^{n}}
\bar{P}^{(n)} )\ra \mathrm{gr}^{<0}(T_{\bar{H}^{n-1}}\bar{P}^{(n-1)})$ and
$I_{\bar{H}^{n-1}} : \gm\ra \mathrm{gr}^{<0} (T_{\bar{H}^{n-1}}\bar{P}^{(n-1)})$
have this property.
\end{proof}

In the next sections we shall consider various adapted gradations and quasi-gradations
of $T\bar{P}^{(n)}$ or $T\bar{P}^{(n-1)}.$ They are always considered with respect to
the filtrations of the Tanaka structures of these manifolds.

\subsection{Definition and basic properties of
$\pi^{n+1}$}\label{basic-pi-n}

An important role in the prolongation procedure plays
a $G$-structure $\pi^{n+1} : P^{n+1} \ra
\bar{P}^{(n)}$  which we are going to define in this subsection.
Let $\bar{H}^{n}\in
\bar{P}^{(n)}$ and $H^{n+1}=\{ (H^{n+1})^{i},\ -k\leq i\leq n\}$
an adapted gradation of $T_{\bar{H}^{n}}\bar{P}^{(n)}$. It
projects to an adapted gradation $(\bar{\pi}^{(n)})_{*} (H^{n+1})
:= \{ (\bar{\pi}^{(n)})_{*} (H^{n+1})^{i},\ -k \leq i\leq  n-1\}$
of $T_{\bar{H}^{n-1}} \bar{P}^{(n-1)}$ (remark that $(H^{n+1})^{n}
= T^{v}  \bar{P}^{(n)}$ projects trivially to $T_{\bar{H}^{n-1}}
\bar{P}^{(n-1)}$). The adapted gradations ${H}^{n+1}$ and
$(\bar{\pi}^{(n)})_{*} (H^{n+1}) $ define frames which lift the
canonical graded frames $I_{\bar{H}^{n}}$ and $I_{\bar{H}^{n-1}}$
respectively (see Theorem \ref{g-f-vspace}, applied to gradations and lifts):
\begin{align}
\nonumber F_{H^{n+1}}&= \widehat{H^{n+1}}\circ I_{\bar{H}^{n}}:  \gm_{n} \ra T_{\bar{H}^{n}}\bar{P}^{(n)}\\
\label{frame-n} F_{ (\bar{\pi}^{(n)})_{*} (H^{n+1})} &= \widehat{
(\bar{\pi}^{(n)})_{*} (H^{n+1})} \circ I_{\bar{H}^{n-1}} :
\gm_{n-1} \ra T_{\bar{H}^{n-1}}\bar{P}^{(n-1)}.
\end{align}
As usual,  $F^{i}_{H^{n+1}}:=
F_{H^{n+1}}\vert_{ (\gm_{n})^{i}}$ $(i\leq n$) and similarly
$F_{(\bar{\pi}^{(n)})_{*} (H^{n+1})}^{i}:=
F_{(\bar{\pi}^{(n)})_{*} (H^{n+1})}\vert_{(\gm_{n-1})^{i}}$
$(i\leq n-1$). Recall that $\bar{P}^{(n)}\subset \mathrm{Gr}_{n+1} (T\bar{P}^{(n-1)}).$

\begin{defn}\label{pi-n-t} The manifold $P^{n+1}$
is  the set of all adapted gradations $H^{n+1}$ of
$T_{\bar{H}^{n}}\bar{P}^{(n)}$ (for any $\bar{H}^{n}\in
\bar{P}^{(n)}$), whose projection $(\bar{\pi}^{(n)})_{*}
(H^{n+1})$ to $T_{\bar{H}^{n-1}}\bar{P}^{(n-1)}$ is compatible
with the quasi-gradation $\bar{H}^{n}\in \mathrm{Gr}_{n+1}
(T_{\bar{H}^{n-1}}\bar{P}^{(n-1)})$ (where $\bar{H}^{n-1}:=
\bar{\pi}^{(n)}(\bar{H}^{n})$).
The map $\pi^{n+1} : P^{n+1} \ra
\bar{P}^{(n)}$ is the natural projection.
\end{defn}

More precisely, we set
$$
P^{n+1} =\cup_{\bar{H}^{n} \in \bar{P}^{(n)}}
\{ H^{n+1} \in\mathrm{Gr} (
T_{\bar{H}^{n}}\bar{P}^{(n)}),\ \Pi^{n+1}
(\bar{\pi}^{(n)})_{*}({H}^{n+1}) =\bar{H}^{n}\}
$$
where $\Pi^{n+1}:\mathrm{Gr} (T_{\bar{H}^{n-1}}\bar{P}^{(n-1)})
\rightarrow \mathrm{Gr}_{n+1} (T_{\bar{H}^{n-1}}\bar{P}^{(n-1)})$
is the map (\ref{pi-m}). Using the
first relation (\ref{frame-n}), we identify any $H^{n+1} \in P^{n+1}$ with the associated frame
$F_{H^{n+1}}.$
The next lemma describes
${P}^{n+1}$ as a submanifold of the frame manifold of
$\bar{P}^{(n)}$.
In Lemma \ref{ajut-p2} ii) below the map $F_{\bar{H}^{n}}$ is the $(n+1)$-lift of $I_{\bar{H}^{n-1}}$
determined by $\bar{H}^{n}\in \mathrm{Gr}_{n+1} ( T_{\bar{H}^{n-1}}\bar{P}^{(n-1)})$
(according to  Theorem \ref{g-f-vspace}):
\begin{equation}\label{bar-lift}
F_{\bar{H}^{n}} = (F_{\bar{H}^{n}}^{i}),\quad 
F_{\bar{H}^{n}}^{i}=
(\widehat{\bar{H}^{n}})^{i}\circ I_{\bar{H}^{n-1}}
: (\gm_{n-1})^{i} \ra \mathrm{gr}^{i}_{(n+1)}(T_{\bar{H}^{n-1}} \bar{P}^{(n-1)}),\ -k\leq i\leq n-1.
\end{equation}

\begin{lem}\label{ajut-p2} i) Let $H^{n+1} = \{ (H^{n+1})^{i},\ -k\leq
i\leq n\}$ be an adapted gradation of $T_{\bar{H}^{n}}
\bar{P}^{(n)}$ and $(\bar{\pi}^{(n)})_{*} (H^{n+1})$ its
projection to $T_{\bar{H}^{n-1}}\bar{P}^{(n-1)}.$ The associated
frames $F_{(\bar{\pi}^{(n)})_{*} (H^{n+1})}$ and $F_{H^{n+1}}$
defined by (\ref{frame-n}) are related by

\begin{equation}\label{rel-fr}
F_{ (\bar{\pi}^{(n)})_{*} (H^{n+1})} =
(\bar{\pi}^{(n)})_{*}\circ F_{H^{n+1}}\vert_{\gm_{n-1}}.
\end{equation}

ii)  The fiber of $\pi^{n+1}$ over $\bar{H}^{n}\in \bar{P}^{(n)}$
consists of all $H^{n+1} \in \mathrm{Gr}
(T_{\bar{H}^{n}}\bar{P}^{(n)})$ whose associated frame
$F_{H^{n+1}}$ satisfies: for any $-k\leq i\leq n-1$ and
$x\in (\gm_{n-1})^{i}$,
\begin{equation}\label{cat}
\mathrm{pr}^{i}_{(n+1)}(\bar{\pi}^{(n)})_{*} F^{i}_{H^{n+1}}(x) = F^{i}_{\bar{H}^{n}}(x),
\end{equation}
where $\mathrm{pr}^{i}_{(n+1)}: (\bar{\mathcal
D}^{(n-1)}_{i})_{\bar{H}^{n-1}} \ra
\mathrm{gr}^{i}_{(n+1)} (T_{\bar{H}^{n-1}}\bar{P}^{(n-1)})$
is the natural projection.
In particular, $(\bar{\pi}^{(n)})_{*} F_{H^{n+1}}
= F_{\bar{H}^{n}}$
on $(\gm_{n-1})_{-1}.$
\end{lem}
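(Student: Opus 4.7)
The argument is a direct unwinding of definitions; the substance is the naturality of the ``hat'' construction under the differential $(\bar{\pi}^{(n)})_{*}$. For part i), I would start from the two defining formulas (\ref{frame-n}) and use the decomposition (\ref{isom}), namely $I_{\bar{H}^{n}}|_{\gm_{n-1}} = (\bar{\pi}^{(n)})_{*}^{-1}\circ I_{\bar{H}^{n-1}}$ (where on the right the pushforward is viewed as the induced graded isomorphism in degrees $\leq n-1$). Since $(\bar{\pi}^{(n)})_{*}$ sends each subspace $(H^{n+1})^{i}\subset T_{\bar{H}^{n}}\bar{P}^{(n)}$ ($i\leq n-1$) isomorphically onto $(\bar{\pi}^{(n)})_{*}(H^{n+1})^{i}$ and identifies the corresponding associated graded pieces, one obtains the functoriality relation
\begin{equation*}
(\bar{\pi}^{(n)})_{*}\circ \widehat{H^{n+1}}^{i} = \widehat{(\bar{\pi}^{(n)})_{*}(H^{n+1})}^{i}\circ (\bar{\pi}^{(n)})_{*},\quad -k\leq i\leq n-1.
\end{equation*}
Combining this with (\ref{isom}) immediately yields (\ref{rel-fr}).

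For part ii), the plan is to rewrite the defining condition of $P^{n+1}$ via Proposition \ref{functii} ii). The compatibility of the adapted gradation $(\bar{\pi}^{(n)})_{*}(H^{n+1})$ with the $(n+1)$-quasi-gradation $\bar{H}^{n}$ translates into the pointwise equality $\mathrm{pr}^{i}_{(n+1)}\circ \widehat{(\bar{\pi}^{(n)})_{*}(H^{n+1})}^{i} = \widehat{\bar{H}^{n}}^{i}$ for $-k\leq i\leq n-1$. Composing on the right with $I_{\bar{H}^{n-1}}|_{(\gm_{n-1})^{i}}$, the right hand side becomes $F^{i}_{\bar{H}^{n}}$ by (\ref{bar-lift}), while the left hand side equals $\mathrm{pr}^{i}_{(n+1)}\circ (\bar{\pi}^{(n)})_{*}\circ F^{i}_{H^{n+1}}$ by part i). This gives (\ref{cat}); the converse implication runs identically in reverse.

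For the ``in particular'' statement I would only note that $\bar{\mathcal D}^{(n-1)}_{j}=0$ for $j\geq n$; hence for any $i\geq -1$ one has $i+n+1\geq n$, so $\mathrm{gr}^{i}_{(n+1)}(T_{\bar{H}^{n-1}}\bar{P}^{(n-1)}) = (\bar{\mathcal D}^{(n-1)}_{i})_{\bar{H}^{n-1}}$ and $\mathrm{pr}^{i}_{(n+1)}=\mathrm{Id}$. Equation (\ref{cat}) then collapses to the stated identity on $(\gm_{n-1})_{-1}=\gm^{-1}+\sum_{r=0}^{n-1}\gg^{r}$. I do not foresee any genuine obstacle; the only care required is bookkeeping the fact that the pushforward $(\bar{\pi}^{(n)})_{*}$ is an isomorphism on graded pieces only in degrees $\leq n-1$, whereas the top piece $(H^{n+1})^{n}=T^{v}\bar{P}^{(n)}$ is sent to zero — which is precisely why in (\ref{rel-fr}) one restricts $F_{H^{n+1}}$ to $\gm_{n-1}$.
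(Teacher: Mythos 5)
Your proposal is correct and follows essentially the same route as the paper: the commutation relation you write for the hat maps is exactly the paper's equation (\ref{aditional}), and part ii) is obtained, as in the paper, by translating compatibility via Proposition \ref{functii} ii) and composing with $I_{\bar{H}^{n-1}}$ using (\ref{frame-n}), (\ref{bar-lift}) and part i). Your explicit justification of the ``in particular'' clause (that $\mathrm{pr}^{i}_{(n+1)}$ is the identity for $i\geq -1$ because $\bar{\mathcal D}^{(n-1)}_{j}=0$ for $j\geq n$) is a correct filling-in of a step the paper leaves implicit.
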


\begin{proof}
From the definitions of
$\widehat{H^{n+1}}$ and $\widehat{(\bar{\pi}^{(n)})_{*}
(H^{n+1})}$,
\begin{equation}\label{aditional}
\widehat{ (\bar{\pi}^{(n)})_{*} (H^{n+1})} \circ
(\bar{\pi}^{(n)})_{*}\vert_{ \mathrm{gr}^{\leq
n-1}(T_{\bar{H}^{n}} \bar{P}^{(n)})} = (\bar{\pi}^{(n)})_{*} \circ
\widehat{H^{n+1}}\vert_{\mathrm{gr}^{\leq n-1}(T_{\bar{H}^{n}}
\bar{P}^{(n)})}.
\end{equation}
Relation (\ref{rel-fr}) follows from (\ref{frame-n}),
(\ref{aditional}) and
$I_{\bar{H}^{n}}\vert_{\gm_{n-1}} = ( \bar{\pi}^{(n)})_{*}^{-1}
\circ I_{\bar{H}^{n-1}}.$

For claim ii), let $H^{n+1}\in \mathrm{Gr}(T_{\bar{H}^{n}}
\bar{P}^{(n)})$. Then $H^{n+1} \in P^{n+1}$ if and only if
$(\bar{\pi}^{(n)})_{*}(H^{n+1})\in
\mathrm{Gr}(T_{\bar{H}^{n-1}}\bar{P}^{(n-1)})$ is compatible with
the quasi-gradation $\bar{H}^{n}\in
\mathrm{Gr}_{n+1}(T_{\bar{H}^{n-1}}\bar{P}^{(n-1)}).$ From
Proposition \ref{functii} ii), this  condition is equivalent to
\begin{equation}\label{rel-hat}
\mathrm{pr}^{i}_{(n+1)} \circ  \widehat{ (\bar{\pi}^{(n)})_{*}
(H^{n+1})}^{i} = (\widehat{\bar{H}^{n}})^{i},\ i\leq n-1.
\end{equation}
Composing   (\ref{rel-hat}) with
$I_{\bar{H}^{n-1}}$ and using the relations (\ref{frame-n}) and (\ref{bar-lift}),
we obtain that (\ref{rel-hat}) is equivalent to
$\mathrm{pr}^{i}_{(n+1)} \circ F_{ ( \bar{\pi}^{(n)})_{*}(H^{n+1})}^{i} = F^{i}_{\bar{H}^{n}}$, or,
from  (\ref{rel-fr}), to (\ref{cat}).
\end{proof}

Below any $A\in \mathrm{Hom} (\sum_{i=0}^{n-1}\gg^{i}, \gg^{n})$ acts on
$\gm_{n}$, by annihilating $\gm$ and $\gg^{n}.$

\begin{prop}\label{pi-2} The projection $\pi^{n+1} :
P^{n+1}\ra \bar{P}^{(n)}$ is a $G$-structure with structure group
$\bar{G}:= \mathrm{Id} + \mathfrak{gl}_{n+1} (\gm_{n})
+\mathrm{Hom} (\sum_{i=0}^{n-1}\gg^{i}, \gg^{n})$.
\end{prop}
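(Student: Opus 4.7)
The plan is to realize $\pi^{n+1}$ as a principal subbundle of the bundle $\mathrm{Gr}(T\bar{P}^{(n)}) \to \bar{P}^{(n)}$ of adapted gradations of $T\bar{P}^{(n)}$, and then determine its structure group by computing exactly which elements of the ambient group $GL_1(\gm_n)$ preserve the compatibility condition (\ref{cat}) that cuts out $P^{n+1}$.

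First I would invoke Lemma \ref{first-pin} to equip $\bar{P}^{(n)}$ with its canonical Tanaka $\{e\}$-structure of type $\gm_n$ and its canonical graded frames $I_{\bar{H}^n}$. Combining Theorem \ref{g-f-vspace} (applied to gradations) with Theorem \ref{lifts-adaugat} i) applied fiberwise, the bundle $\mathrm{Gr}(T\bar{P}^{(n)}) \to \bar{P}^{(n)}$ is a principal $GL_1(\gm_n)$-bundle under the right action $F \cdot A := F \circ A$. By Lemma \ref{ajut-p2} ii), $P^{n+1}$ sits inside this ambient bundle as the locus where the frame $F = F_{H^{n+1}}$ satisfies (\ref{cat}), namely $\mathrm{pr}^i_{(n+1)} (\bar{\pi}^{(n)})_* F(x) = F^i_{\bar{H}^n}(x)$ for every $x \in (\gm_{n-1})^i$ and $-k \leq i \leq n-1$.

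Given $F \in P^{n+1}$ over $\bar{H}^n$ and $A = \mathrm{Id} + \sum_{j\geq 1} A_j$ with $A_j \in \mathfrak{gl}^j(\gm_n)$, expanding $(F \circ A)(x) = F(x) + \sum_{j\geq 1} F(A_j(x))$ for $x \in (\gm_{n-1})^i$ and using that $F$ itself satisfies (\ref{cat}) reduces the condition $F \circ A \in P^{n+1}$ to
\[
\mathrm{pr}^i_{(n+1)} (\bar{\pi}^{(n)})_* F(A_j(x)) = 0 \ \text{ in } \ V_i/V_{i+n+1},\qquad V := T_{\bar{H}^{n-1}}\bar{P}^{(n-1)},
\]
for all admissible $i, j, x$. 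A case analysis on $j$ is the heart of the argument. For $j \geq n+1$ the condition is automatic, since $F(A_j(x)) \in \bar{\mathcal{D}}^{(n)}_{i+j}$ pushes into $V_{i+j} \subset V_{i+n+1}$. For $1 \leq j \leq n$ and $A_j(x) \neq 0$, the lift property of $F$ guarantees that $(\bar{\pi}^{(n)})_* F(A_j(x))$ represents the nonzero class $I_{\bar{H}^{n-1}}(\pi_{\gm_{n-1}}A_j(x))$ in $\mathrm{gr}^{i+j}(V)$ whenever $i + j \leq n-1$; since $V_{i+n+1} \subset V_{i+j+1}$ for $j \leq n$, such a class cannot vanish in $V_i/V_{i+n+1}$. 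The only escape is to have $A_j(x) = 0$, or $A_j(x) \in \gg^n$ (which forces $i + j = n$, in which case $F(A_j(x)) \in T^v\bar{P}^{(n)} = \mathrm{Ker}\,(\bar{\pi}^{(n)})_*$). Degree by degree this forces $A_j|_\gm = 0$ for $1 \leq j \leq n$; and among $A_j|_{\gg^i}$ with $0 \leq i \leq n-1$, only the component $A_{n-i}|_{\gg^i}: \gg^i \to \gg^n$ is permitted to be nonzero, and it is arbitrary. Together with the unrestricted $A_j$ for $j \geq n+1$, this isolates exactly $\bar{G} = \mathrm{Id} + \mathfrak{gl}_{n+1}(\gm_n) + \mathrm{Hom}(\sum_{i=0}^{n-1} \gg^i, \gg^n)$.

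Finally, simple transitivity of the $\bar{G}$-action on each fiber $(\pi^{n+1})^{-1}(\bar{H}^n)$ follows from the simply transitive $GL_1(\gm_n)$-action (Theorem \ref{lifts-adaugat} i)) together with the characterization of the stabilizer just obtained; local triviality of $\pi^{n+1}$ is inherited from the ambient principal $GL_1(\gm_n)$-bundle. The main obstacle is the degree bookkeeping in the stabilizer computation, pivoting on the observation that a nonzero element of filtration degree $i + j$ in $V$ cannot represent the zero class of $V_i / V_{i+n+1}$ when $j \leq n$; this is precisely what limits which homogeneous components of $A$ survive and produces the particular shape of $\bar{G}$.
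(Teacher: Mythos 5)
Your proof is correct and follows essentially the same route as the paper: both arguments rest on Lemma \ref{ajut-p2} ii) (the compatibility condition (\ref{cat})) together with filtration-degree bookkeeping in $T_{\bar{H}^{n-1}}\bar{P}^{(n-1)}$, the paper phrasing it as a direct computation of the difference $F_{H^{n+1}}^{-1}\circ F_{\tilde{H}^{n+1}}$ of two frames in a fiber, and you phrasing it as the subgroup of $GL_{1}(\gm_{n})$ preserving (\ref{cat}). Your version is slightly more complete in that it explicitly records the converse inclusion (that every element of $\bar{G}$ maps the fiber to itself), which the paper leaves implicit.
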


\begin{proof}
Let $H^{n+1}$, $\tilde{H}^{n+1}\in (\pi^{n+1})^{-1} (\bar{H}^{n})$
be two adapted gradations of $T_{\bar{H}^{n}} \bar{P}^{(n)}$, whose projections to $T_{\bar{H}^{n-1}} \bar{P}^{(n-1)}$ are
compatible with the quasi-gradation $\bar{H}^{n} \in
\mathrm{Gr}_{n+1} (T_{\bar{H}^{n-1}} \bar{P}^{(n-1)})$.
From Lemma \ref{ajut-p2} ii), for any $x\in (\gm_{n-1})^{i}$, $i\leq n-1$,
\begin{equation}\label{dif-f1}
F^{i}_{H^{n+1}} (x)- F^{i}_{\tilde{H}^{n+1}}(x) \in
(\bar{\pi}^{(n)})_{*}^{-1} (\bar{\mathcal
D}^{(n-1)}_{i+n+1})_{\bar{H}^{n-1}}= (\bar{\mathcal
D}^{(n)}_{i+n+1})_{\bar{H}^{n}} + T^{v}_{\bar{H}^{n}}
\bar{P}^{(n)}.
\end{equation}
Note  that $T^{v}_{\bar{H}^{n}} \bar{P}^{(n)}\subset
(\bar{\mathcal D}^{(n)}_{i+n+1})_{\bar{H}^{n}}$ when $i\leq -1$
and $(\bar{\mathcal D}^{(n)}_{i+n+1})_{\bar{H}^{n}}=0$ when $i\geq
0$.  Also,
\begin{equation}\label{dif-f2}
F^{n}_{H^{n+1}} = F^{n}_{\tilde{H}^{n+1}}: (\gm_{n})^{n} = \gg^{n}\ra
(\bar{\mathcal D}^{(n)}_{n})_{\bar{H}^{n}} = T^{v}_{\bar{H}^{n}}
\bar{P}^{(n)}
\end{equation}
is the vertical parallelism  of
$\bar{\pi}^{(n)}.$
From  relations (\ref{dif-f1}) and (\ref{dif-f2}) we obtain
$\mathrm{Id} + A : = F_{H^{n+1}}^{-1} \circ F_{\tilde{H}^{n+1}}\in \bar{G}$.
\end{proof}

\subsection{An action of $G^{n} GL_{n+1}(\gm_{n-1})$ on
$P^{n+1}$}\label{action-g-n}

In this subsection we define an action of $G^{n} GL_{n+1}(\gm_{n-1})$
on $P^{n+1}$, naturally related to the action of $G^{n}$ on the total space
$\bar{P}^{(n)}$ of the principal $G^{n}$-bundle $\bar{\pi}^{(n)}.$  Consider the group homomorphism
$$
\mathrm{Pr} : G^{n} GL_{n+1} (\gm_{n-1}) \ra G^{n},\quad  g =
\mathrm{Id}+A^{n} +A_{n+1}\ra \mathrm{Pr}(g):= \bar{g}=
\mathrm{Id}+ A^{n}.
$$
Let $\rho^{n}: G^{n} GL_{n+1} (\gm_{n})\ra \mathrm{Aut}(\gm_{n})$ be the trivial extension to  $\gm_{n} = \gm_{n-1} +\gg^{n}$ of the natural (left) action  of $G^{n} GL_{n+1}
(\gm_{n-1})\subset GL (\gm_{n-1})$ 
on $\gm_{n-1}$.  We define an action of  $G^{n}GL_{n+1}(\gm_{n-1})$  on the frame manifold
of $\bar{P}^{(n)}$: for any $g\in
G^{n}GL_{n+1} (\gm_{n-1})$ and frame $F:\gm_{n} \ra
T_{\bar{H}^{n}}\bar{P}^{(n)}$,
\begin{equation}\label{act-pn}
Fg :=  (R_{\bar{g}})_{*}\circ F \circ \rho^{n} (g): \gm_{n} \ra
T_{\bar{H}^{n}\bar{g}}\bar{P}^{(n)}.
\end{equation}

\begin{prop}\label{cheie}  The action (\ref{act-pn}) preserves $P^{n+1}$
and
\begin{equation}\label{cheie-r}
 ({\pi}^{n+1})_{*} ((\xi^{a})^{P^{n+1}}) = (\xi^{\bar{a}})^{\bar{P}^{(n)}},\quad\forall
a\in \gg^{n} + \mathfrak{gl}_{n+1} (\gm_{n-1}).
\end{equation}
(In (\ref{cheie-r}) 
$\bar{a}\in \gg^{n}$ denotes the 
$\gg^{n}$-component of $a$). 

\end{prop}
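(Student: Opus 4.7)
\medskip

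\noindent\textbf{Proof plan for Proposition \ref{cheie}.} The statement has two parts: invariance of $P^{n+1}$ under the action (\ref{act-pn}) and the projection formula for fundamental vector fields. The second part is a direct differentiation; the substantive content is the first part, which itself splits into showing that $Fg$ is a lift of the canonical graded frame $I_{\bar{H}^n\bar{g}}$ (i.e.\ defines an adapted gradation of $T_{\bar{H}^n\bar{g}}\bar{P}^{(n)}$) and that this gradation satisfies the compatibility condition of Lemma \ref{ajut-p2} ii). The main obstacle is showing that the induced action of $G^n\subset G^nGL_{n+1}(\gm_{n-1})/GL_{n+1}(\gm_{n-1})$ on the quasi-gradation $\bar{H}^n$ via (\ref{act-pn}) reproduces \emph{exactly} the action on $\mathrm{Gr}_{n+1}(T_{\bar{H}^{n-1}}\bar{P}^{(n-1)})$ given by formula (\ref{f-bar-form}) of Theorem \ref{lifts-adaugat} iii); this is the point where the design of (\ref{act-pn}) becomes essential.

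To verify that $Fg$ is a lift of $I_{\bar{H}^n\bar{g}}$, I would first note that $(R_{\bar{g}})_*$ preserves the filtration $\{\bar{\mathcal D}^{(n)}_i\}$: the vertical distribution $\bar{\mathcal D}^{(n)}_n$ is preserved automatically, and for $i\le n-1$ this follows from $\bar{\pi}^{(n)}\circ R_{\bar{g}}=\bar{\pi}^{(n)}$ combined with $\bar{\mathcal D}^{(n)}_i=(\bar{\pi}^{(n)})^{-1}_*(\bar{\mathcal D}^{(n-1)}_i)$. Since $\rho^n(g)$ is filtration preserving on $\gm_n$, the composition $Fg$ is filtration preserving. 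For the induced graded maps, the identity $(\bar{\pi}^{(n)})_*\circ(R_{\bar{g}})_*=(\bar{\pi}^{(n)})_*$ implies that $(R_{\bar{g}})_*:\mathrm{gr}^i(T_{\bar{H}^n}\bar{P}^{(n)})\to\mathrm{gr}^i(T_{\bar{H}^n\bar{g}}\bar{P}^{(n)})$ intertwines, for $i\le n-1$, the canonical identifications with $\mathrm{gr}^i(T_{\bar{H}^{n-1}}\bar{P}^{(n-1)})$; combined with $I_{\bar{H}^n}|_{\gm_{n-1}}=(\bar{\pi}^{(n)})^{-1}_*I_{\bar{H}^{n-1}}$ (see Lemma \ref{first-pin}) this gives the required graded identification on $\gm$. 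On $\gg^n$, the general formula $\nu^n_{\bar{H}^n\bar{g}}=(R_{\bar{g}})_*\nu^n_{\bar{H}^n}\mathrm{Ad}(\bar{g})$ from Notation \ref{notatie}, combined with the commutativity of $G^n$ (hence $\mathrm{Ad}(\bar{g})=\mathrm{Id}$), yields $(Fg)^n=\nu^n_{\bar{H}^n\bar{g}}$.

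For the compatibility condition, write $g=\mathrm{Id}+A^n+A_{n+1}$ with $A^n\in\gg^n$ and $A_{n+1}\in\ggl_{n+1}(\gm_{n-1})$, and compute $\mathrm{pr}^i_{(n+1)}(\bar{\pi}^{(n)})_*(Fg)^i(x)$ for $x\in(\gm_{n-1})^i$ by decomposing $g(x)=x+A^n(x)+A_{n+1}(x)$. Using $(\bar{\pi}^{(n)})_*(R_{\bar{g}})_*=(\bar{\pi}^{(n)})_*$: the $A_{n+1}$-contribution lands in $(\bar{\mathcal D}^{(n-1)}_{i+n+1})_{\bar{H}^{n-1}}$ and is killed by $\mathrm{pr}^i_{(n+1)}$; the identity contribution gives $F^i_{\bar{H}^n}(x)$ by the definition of $F\in P^{n+1}$; the $A^n$-contribution, since $A^n(x)\in(\gm_{n-1})^{i+n}$, gives $f_{n+i,n+1}F^{n+i}_{\bar{H}^n}(A^n(x))$ via the same definition and the identity $\mathrm{pr}^i_{(n+1)}=f_{n+i,n+1}\circ\mathrm{pr}^{n+i}_{(n+1)}$ on $V_{n+i}$. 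The resulting sum matches $F^i_{\bar{H}^n\bar{g}}(x)$ verbatim by applying (\ref{f-bar-form}) to $\bar{g}=\mathrm{Id}+A^n$ (whose only nontrivial homogeneous components are in degrees $0$ and $n$). The projection formula (\ref{cheie-r}) then follows by differentiating: $(\pi^{n+1})_*(\xi^a)^{P^{n+1}}_F=\tfrac{d}{dt}|_{t=0}\bar{H}^n\overline{\exp(ta)}=\tfrac{d}{dt}|_{t=0}\bar{H}^n\exp(t\bar{a})=(\xi^{\bar{a}})^{\bar{P}^{(n)}}_{\bar{H}^n}$, using that $\mathrm{Pr}$ is a Lie group homomorphism with differential $a\mapsto\bar{a}$ at the identity.
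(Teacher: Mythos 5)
Your proposal is correct and follows essentially the same route as the paper: you verify that $F_{H^{n+1}}g$ is a lift of $I_{\bar{H}^{n}\bar{g}}$ (using that $R_{\bar{g}}$ preserves the filtration, that $\gg^{n}$ is abelian so the vertical parallelisms match, and that $\rho^{n}(g)-\mathrm{Id}$ raises degree), and then check the compatibility condition of Lemma \ref{ajut-p2} ii) by matching the degree-$0$ and degree-$n$ contributions of $g$ against formula (\ref{f-bar-form}) for $\bar{g}=\mathrm{Id}+A^{n}$, with (\ref{cheie-r}) following from $\pi^{n+1}\circ R_{g}=R_{\bar{g}}\circ\pi^{n+1}$. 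Your write-up of condition II is in fact more detailed than the paper's, which only states the key identity $F^{i}_{\bar{H}^{n}\bar{g}}(x)=F^{i}_{\bar{H}^{n}}(x)+(f_{i+n,n+1}\circ F^{i+n}_{\bar{H}^{n}})(A^{n}x)$ and leaves the rest to the reader.
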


\begin{proof}
Let $H^{n+1} \in P^{n+1}$ and $F_{H^{n+1}} :\gm_{n} \ra
T_{\bar{H}^{n}}\bar{P}^{(n)} $ the associated frame.
We need to prove that for any $g\in G^{n}GL_{n+1} (\gm_{n-1})$, the frame
$F_{H^{n+1}}g$ related to $F_{H^{n+1}}$ as in
(\ref{act-pn}),
belongs to $P^{n+1}$, i.e.  satisfies the following conditions:\

I) it is a lift of $I_{\bar{H}^{n}\bar{g}}
:\gm_{n} \ra \mathrm{gr}(T_{\bar{H}^{n}\bar{g}}\bar{P}^{(n)})$,
i.e. is filtration preserving and
\begin{equation}\label{c-I}
((\mathrm{gr}^{i})^{\bar{\mathcal D}^{(n)}} \circ (F_{H^{n+1}}
g))(x) = (I_{\bar{H}^{n}\bar{g}} \circ \pi_{(\gm_{n})^{i}})(x),\
x\in (\gm_{n})_{i},\ i\leq n-1.
\end{equation}
(This means that $F_{H^{n+1}}g$ is the
frame associated to an adapted gradation of
$T_{\bar{H}^{n}\bar{g}}\bar{P}^{(n)}$).\

II) the adapted gradation from I)  
belongs to $P^{n+1}$,  i.e. (from Lemma \ref{ajut-p2}),
$$
\mathrm{pr}^{i}_{(n+1)}  (\bar{\pi}^{(n)})_{*} F^{i}_{H^{n+1}g}(x) = F^{i}_{\bar{H}^{n} \bar{g}}(x),\
\forall  x\in (\gm_{n-1})^{i},\ i\leq n-1.
$$

\medskip

Since $G^{n} GL_{n+1} (\gm_{n-1}) \subset GL_{1} (\gm_{n})$ and
$(R_{\bar{g}} )_{*}: T_{\bar{H}^{n-1}} \bar{P}^{(n)} \ra
T_{\bar{H}^{n-1}\bar{g}} \bar{P}^{(n)} $ preserve filtrations,
$F_{H^{n+1}}g$ preserves filtrations as well. Using the definition
of $F_{H^{n+1}} g$,
that $R_{\bar{g}}$ preserves filtrations,
$ (R_{\bar{g}^{-1}})_{*} \circ
I_{\bar{H}^{n}\bar{g}} = I_{\bar{H}^{n}}$ (which follows from
(\ref{isom}) and the fact that $\gg^{n}$ is abelian), we obtain that
(\ref{c-I})
is equivalent to
\begin{equation}\label{addd}
((\mathrm{gr}^{i})^{\bar{\mathcal D}^{(n)}} \circ
F_{H^{n+1}})(\rho^{n} (g)(x)) = (I_{\bar{H}^{n}} \circ
\pi_{(\gm_{n})^{i}})(x),\ x\in (\gm_{n})_{i},\ i\leq n.
\end{equation}
Using that 
$\rho^{n} (g) (x) \in (\gm_{n})_{i}$ and  $F_{H^{n+1}}$ lifts $I_{\bar{H}^{n}}$, 
we obtain that (\ref{addd}) is equivalent to  $\pi_{(\gm_{n})^{i}}
(\rho^{n} (g)(x) - x)=0$, which  holds  from the definition of
$\rho^{n}$. Condition I) is proved.

Condition II) can be checked in a similar way,  using
$$
F^{i}_{\bar{H}^{n}\bar{g}} (x)=
F^{i}_{\bar{H}^{n}} (x) + (f_{i+n,n+1} \circ
F^{i+n}_{\bar{H}^{n}})( A^{n}x),\ x\in (\gm_{n-1})^{i},\ i\leq
n-1
$$
where $A^{n}:= \bar{g} - \mathrm{Id}\in \gg^{n}$ and  
$f_{i+n,n+1}: \mathrm{gr}^{i+n}_{(n+1)} (T\bar{P}^{(n-1)}) \ra 
\mathrm{gr}^{i}_{(n+1)} ( T\bar{P}^{(n-1)})$
 is the natural map  (see
Theorem \ref{lifts-adaugat} ii)). We proved that (\ref{act-pn})
defines an action on $P^{n+1}.$ Relation (\ref{cheie-r}) follows
from $\pi^{n+1} \circ R_{g} = R_{\bar{g}} \circ \pi^{n+1}$, for
any $g\in G^{n}GL_{n+1} (\gm_{n-1}).$
\end{proof}

Let $\theta^{n+1} : TP^{n+1} \rightarrow \gm_{n}$ be the soldering
form of the $G$-structure $\pi^{n+1}$:
$$
\theta^{n+1} (X) =  (F_{H^{n+1}})^{-1} ((\pi^{n+1})_{*}X),\quad
\forall X\in T_{H^{n+1}} P^{n+1}.
$$
From relation  (\ref{inv-GL}),
it is $\bar{G}$-equivariant. The next
lemma shows that $\theta^{n+1}$ is equivariant also with respect
to the actions $\rho^{n}$   and (\ref{act-pn})
of $G^{n}GL_{n+1} (\gm_{n-1})$ on
$\gm_{n}$ and  $P^{n+1}$ respectively.

\begin{lem}\label{inv-sold} For any  $g\in G^{n} GL_{n+1}(\gm_{n-1})$
and $a\in \gg^{n} + \mathfrak{gl}_{n+1} (\gm_{n-1})$,
\begin{equation}\label{cheie-relations}
(R_g)^{*}(\theta^{n+1}) = \rho^{n} (g^{-1})\circ
\theta^{n+1},\quad L_{(\xi^{a})^{P^{n+1}}} (\theta^{n+1}) = -
(\rho^{n})_{*} (a) \circ \theta^{n+1}.
\end{equation}
\end{lem}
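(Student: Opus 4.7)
The plan is to mirror the proof of Lemma \ref{invG} from the first prolongation, using the formula (\ref{act-pn}) for the action and the compatibility $\pi^{n+1} \circ R_g = R_{\bar{g}} \circ \pi^{n+1}$ established in Proposition \ref{cheie}. Since the second relation in (\ref{cheie-relations}) is the infinitesimal version of the first, I will first prove the group-level equivariance and then differentiate.

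For the first relation, fix $g \in G^{n} GL_{n+1}(\gm_{n-1})$ and $X \in T_{H^{n+1}} P^{n+1}$. By the definition of the soldering form and the compatibility of $\pi^{n+1}$ with the $G^{n}$-action on $\bar{P}^{(n)}$ (Proposition \ref{cheie}),
\begin{align*}
((R_g)^{*} \theta^{n+1})(X) &= \theta^{n+1}_{H^{n+1}g}((R_g)_{*} X) = (F_{H^{n+1}g})^{-1} ((\pi^{n+1})_{*} (R_g)_{*} X)\\
&= (F_{H^{n+1}g})^{-1} ((R_{\bar{g}})_{*} (\pi^{n+1})_{*} X).
\end{align*}
Inverting $F_{H^{n+1}}g = (R_{\bar{g}})_{*}\circ F_{H^{n+1}}\circ \rho^{n}(g)$ yields $(F_{H^{n+1}}g)^{-1} = \rho^{n}(g^{-1})\circ (F_{H^{n+1}})^{-1}\circ (R_{\bar{g}^{-1}})_{*}$, and substituting this in gives $\rho^{n}(g^{-1}) \circ \theta^{n+1}(X)$, as required. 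The only subtle point is checking that $\rho^{n}$ is a group homomorphism, so that $\rho^{n}(g)^{-1} = \rho^{n}(g^{-1})$, which is immediate from its definition as the trivial extension of the natural action.

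For the second relation, I would apply the first to the one-parameter subgroup $g_t = \exp(ta)$ (for $a \in \gg^{n} + \mathfrak{gl}_{n+1}(\gm_{n-1})$) and differentiate at $t=0$. The left-hand side gives $L_{(\xi^{a})^{P^{n+1}}}(\theta^{n+1})$ by definition of the fundamental vector field, while the right-hand side gives $-(\rho^{n})_{*}(a) \circ \theta^{n+1}$ since $\frac{d}{dt}\big|_{t=0}\rho^{n}(g_t^{-1}) = -(\rho^{n})_{*}(a)$.

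I do not expect any real obstacle here: the argument is a direct transcription of the first-prolongation case, and all the ingredients (the explicit action formula, the projection compatibility, and the fact that $\rho^{n}$ is defined by trivial extension and thus is a genuine representation) are already in place. The only point to watch is that elements $a \in \mathfrak{gl}_{n+1}(\gm_{n-1})$ project trivially under $\rho^{n}_{*}$ to the action on $\gm_{n}$ since $\mathfrak{gl}_{n+1}(\gm_{n-1})$ embeds into $G^{n}GL_{n+1}(\gm_{n-1})$ with $\mathrm{Pr}$ sending it to the identity — but this is precisely consistent with the second equation, which treats $a$ through $(\rho^{n})_{*}$ in the same way the first treats $g$ through $\rho^{n}$.
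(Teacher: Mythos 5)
Your proof is correct and is essentially the paper's own argument: compute $(R_g)^{*}\theta^{n+1}$ from the definition of the soldering form, use $\pi^{n+1}\circ R_g = R_{\bar{g}}\circ \pi^{n+1}$ together with the action formula $F_{H^{n+1}}g = (R_{\bar{g}})_{*}\circ F_{H^{n+1}}\circ \rho^{n}(g)$, and obtain the second identity as the infinitesimal version of the first. One small correction to your closing remark: $(\rho^{n})_{*}$ does \emph{not} annihilate $\mathfrak{gl}_{n+1}(\gm_{n-1})$ --- since $\rho^{n}$ is the trivial extension to $\gm_{n}$ of the inclusion $G^{n}GL_{n+1}(\gm_{n-1})\subset GL(\gm_{n-1})$, one has that $(\rho^{n})_{*}(A_{n+1})$ acts as $A_{n+1}$ on $\gm_{n-1}$ (and as $0$ on $\gg^{n}$); it is the homomorphism $\mathrm{Pr}$, and hence the induced action on the base $\bar{P}^{(n)}$, that kills the $GL_{n+1}$-part --- but this side remark is not used in your derivation, which is valid as written.
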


\begin{proof}
Like in the proof of Lemma \ref{invG}, for  any $g\in
G^{n}GL_{n+1} (\gm_{n-1})$,
$$
(R_g)^{*}(\theta^{n+1}) (X_{H^{n+1}}) = \theta^{n+1} ((R_g)_{*}
(X_{H^{n+1}}))= (  F_{H^{n+1}}g)^{-1}( (\pi^{n+1}\circ R_g)_{*}
(X_{H^{n+1}})).
$$
From $F_{H^{n+1}}g = (R_{\bar{g}})_{*}\circ F_{H^{n+1}} \circ
\rho^{n} (g)$ and $ \pi^{n+1} \circ R_g =R_{\bar{g}}\circ \pi^{n+1}$,
we obtain the first relation (\ref{cheie-relations}). The
second relation (\ref{cheie-relations}) is the infinitesimal
version of the first.

\end{proof}

\section{The torsion function of $\pi^{n+1}$}\label{torsion-g-n}

In this section we prove the following theorem.

\begin{thm}\label{main-t} Let $\rho$ be a connection on the $G$-structure $\pi^{n+1}$ and $t^{\rho}$ its torsion function.

i) Then 
$t^{\rho} : P^{n+1} \ra \mathrm{Hom}( (\gm^{-1}+ \gg^{n})\wedge
\gm_{n}, \gm_{n})$
has only homogeneous components of non-negative reduced degree,
i.e. for any $H^{n+1} \in P^{n+1}$ and $-k \leq i\leq n$,
$$
t^{\rho}_{H^{n+1}}( \gm^{-1} \wedge (\gm_{n})^{i}) \subset
(\gm_{n})_{i-1},\ t^{\rho}_{H^{n+1}}( \gg^{n} \wedge
(\gm_{n})^{i})\subset (\gm_{n})_{\mathrm{min}\{ n+i,n\}}.
$$
ii)  For any $H^{n+1}\in P^{n+1}$,
\begin{equation}\label{comp-0}
(t^{\rho}_{H^{n+1}})^{0}(a\wedge b) = - [a,b],\ \forall a\wedge b
\in \gm^{-1}\wedge \gm_{n}+ \gg^{n} \wedge \gm .
\end{equation}
\end{thm}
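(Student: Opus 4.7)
The plan is to decompose the domain $(\gm^{-1}+\gg^n)\wedge\gm_n$ as the direct sum of the three subspaces $\gm^{-1}\wedge\gm$, $\gm^{-1}\wedge\sum_{i=0}^{n-1}\gg^i$ and $\gg^n\wedge\gm_n$, and prove both i) and ii) separately on each. The key inputs will be the Tanaka $\{e\}$-structure on $\bar{P}^{(n)}$ from Lemma \ref{first-pin}, the $G^nGL_{n+1}(\gm_{n-1})$-action from Proposition \ref{cheie} together with the equivariance Lemma \ref{inv-sold}, and the inductive hypothesis on $\tilde\pi^n$ (part C) of Theorem \ref{sf0} at stage $n$). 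In each case the argument mirrors the corresponding part of Theorem \ref{step-1}.

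For $a\wedge b\in\gm^{-1}\wedge\gm^i$ ($i<0$) I would adapt case I of Theorem \ref{step-1}. Setting $\mathcal{D}^{P^{n+1}}_{j}:=(\pi^{n+1})^{-1}_{*}(\bar{\mathcal{D}}^{(n)}_{j})$, the fact that $F_{H^{n+1}}$ lifts $I_{\bar{H}^n}$ makes it filtration preserving, so $X_a\in\Gamma(\mathcal{D}^{P^{n+1}}_{-1})$ and $X_b\in\Gamma(\mathcal{D}^{P^{n+1}}_{i})$; the filtration property of the Tanaka structure on $\bar{P}^{(n)}$ forces $[X_a,X_b]\in\Gamma(\mathcal{D}^{P^{n+1}}_{i-1})$, whence $t^\rho_{H^{n+1}}(a\wedge b)\in(\gm_n)_{i-1}$. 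For the degree-zero part, Lemma \ref{usoara} applied to $\pi^{n+1}$ combined with the lift identity analogous to (\ref{frame-0}) identifies $(\mathrm{gr}^{i-1})^{\bar{\mathcal{D}}^{(n)}}(\pi^{n+1})_{*}[X_a,X_b]$ with the symbol bracket $\{I_{\bar{H}^n}(a),I_{\bar{H}^n}(b)\}$, which equals $I_{\bar{H}^n}([a,b])$ because $I_{\bar{H}^n}|_{\gm}$ is a Lie algebra isomorphism by part A) of Theorem \ref{sf0}; this gives $(t^\rho_{H^{n+1}})^0(a\wedge b)=-[a,b]$.

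For $a\in\gg^n$ and $b\in(\gm_n)^i$ I would follow case II of Theorem \ref{step-1} via Proposition \ref{cheie}. Since $F_{H^{n+1}}(a)=\nu^n_{\bar{H}^n}(a)$ equals $(\xi^a)^{\bar{P}^{(n)}}_{\bar{H}^n}$, both $X_a$ and the $P^{n+1}$-fundamental field $(\xi^a)^{P^{n+1}}$ project under $\pi^{n+1}$ to $(\xi^a)^{\bar{P}^{(n)}}$, so $Y:=X_a-(\xi^a)^{P^{n+1}}$ is $\pi^{n+1}$-vertical. Expanding $Y=\sum_{s}f_{s}(\xi^{A_s})^{P^{n+1}}$ in a basis $\{A_s\}$ of the Lie algebra of $\bar{G}=\mathrm{Id}+\mathfrak{gl}_{n+1}(\gm_n)+\mathrm{Hom}(\sum_{i=0}^{n-1}\gg^i,\gg^n)$ and splitting $t^\rho_{H^{n+1}}(a\wedge b)=-\theta^{n+1}([(\xi^a)^{P^{n+1}},X_b])-\theta^{n+1}([Y,X_b])$, Lemma \ref{inv-sold} together with the constancy of $\theta^{n+1}(X_b)=b$ reduces the first summand to $(\rho^n)_{*}(a)(b)=a(b)$, while the standard equivariance (\ref{inv-GL}) reduces the second to $A(b)$ with $A:=\sum_{s}f_{s}(H^{n+1})A_s$. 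Hence $t^\rho_{H^{n+1}}(a\wedge b)=-a(b)-A(b)$, which lies in $(\gm_n)_{\min\{n+i,n\}}$: $\gg^n\subset\mathrm{Hom}^n(\gm,\gm_{n-1})$ annihilates $\sum_{i\geq 0}\gg^i$ and sends $\gm^i$ into $(\gm_{n-1})^{i+n}$, while each $A_s$ either shifts degrees by at least $n+1$ on $\gm_n$ or maps $\sum_{i=0}^{n-1}\gg^i$ into $\gg^n$. For ii) on $\gg^n\wedge\gm$ the $A$-contribution is of strictly higher reduced degree than $-a(b)$, so the degree-zero projection yields $(t^\rho_{H^{n+1}})^0(a\wedge b)=-a(b)=-[a,b]$ by (\ref{mixed}).

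For $a\wedge b\in\gm^{-1}\wedge\sum_{i=0}^{n-1}\gg^i$ I would transfer the result from the torsion of $\tilde\pi^n$. Choose a connection $\tilde\rho$ on $\tilde\pi^n$ and lift it to a compatible $\rho$ on $\pi^{n+1}$ using the projection $\tilde{P}^n\to\bar{P}^{(n)}$ provided by part B) of Theorem \ref{sf0}, so that the $\rho$-twisted fields on $P^{n+1}$ correspond, at the relevant degrees, to the $\tilde\rho$-twisted fields on $\tilde{P}^n$; the inductive hypothesis then delivers $t^{\tilde\rho}_{H^n}(a\wedge b)\in(\gm_{n-1})_{i-1}\subset(\gm_n)_{i-1}$ and $(t^{\tilde\rho}_{H^n})^0(a\wedge b)=-[a,b]$, which transport to the corresponding statements for $t^\rho_{H^{n+1}}$. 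The main obstacle lies precisely in this last case: setting up the connection lift and verifying that the torsion identifications are compatible with the quasi-gradation structure identifying $\bar{P}^{(n)}\cong\tilde{P}^n/GL_{n+1}(\gm_{n-1})$ requires a careful comparison of vertical directions and projections in the two bundles, and this is the step where the quasi-gradation machinery of Section \ref{algebraic-sect} becomes essential.
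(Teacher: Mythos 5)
Your decomposition of the domain into $\gm^{-1}\wedge\gm$, $\gm^{-1}\wedge\sum_{i=0}^{n-1}\gg^{i}$ and $\gg^{n}\wedge\gm_{n}$ is exactly the paper's, and your treatments of the first and third pieces reproduce Propositions \ref{part1} and \ref{part3} faithfully: the filtration/symbol-bracket argument via Lemma \ref{usoara} and the Lie-algebra-isomorphism property of $I_{\bar{H}^{n}}\vert_{\gm}$ for the first, and the decomposition $X_{a}^{n+1}=(\xi^{a})^{P^{n+1}}+Y$ with $Y$ vertical, Lemma \ref{inv-sold}, and the computation $t^{\rho}_{H^{n+1}}(a\wedge b)=-(\rho^{n})_{*}(a)(b)-A(b)$ for the third. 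Those two parts are fine.

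The gap is in the middle component, and it is precisely the step you flag as "the main obstacle" without resolving it. The proposed mechanism --- lifting a connection $\tilde{\rho}$ on $\tilde{\pi}^{n}$ to a "compatible" connection $\rho$ on $\pi^{n+1}$ so that the twisted fields correspond --- does not make sense as stated: $\tilde{\pi}^{n}:\tilde{P}^{n}\ra\bar{P}^{(n-1)}$ and $\pi^{n+1}:P^{n+1}\ra\bar{P}^{(n)}$ are principal bundles over \emph{different} bases, so there is no natural transport of connections between them, and the twisted fields live on different total spaces and project to different manifolds. What the paper actually does (Lemma \ref{mai-jos} and Proposition \ref{part2}) is: for $a\in(\gm_{n-1})_{-1}$ the $\tilde{\rho}$-twisted fields $X^{n}_{a}$ are $GL_{n+1}(\gm_{n-1})$-invariant and descend to fields $\widehat{X}^{n}_{a}$ on $\bar{P}^{(n)}$; the inductive hypothesis C) is converted into the bracket relation $[\widehat{X}^{n}_{a},\widehat{X}^{n}_{b}]=\widehat{X}^{n}_{[a,b]}\ \mathrm{mod}(\bar{\mathcal D}^{(n)}_{i})$; and the compatibility of $H^{n+1}$ with the quasi-gradation $\bar{H}^{n}$ (Lemma \ref{ajut-p2}) yields $F_{H^{n+1}}(a)=(\widehat{X}^{n}_{a})_{\bar{H}^{n}}\ \mathrm{mod}(T^{v}_{\bar{H}^{n}}\bar{P}^{(n)})$, which is relation (\ref{relatii-cheie-0}) and is the actual point where the quasi-gradation machinery enters. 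Even then, $X^{n+1}_{a}$ agrees with the $\rho$-horizontal lift of $\widehat{X}^{n}_{a}$ only up to a horizontal lift of a fundamental field $(\xi^{c})^{\bar{P}^{(n)}}$ with $c\in\gg^{n}$ plus a vertical fundamental field $(\xi^{A})^{P^{n+1}}$, and these corrections must be carried through the structure equation; the residual term $d(a)\in\gg^{n-1}$ lands exactly at the critical degree and needs the explicit check that it does not pollute the degree-zero component. None of this bookkeeping is present in your outline, so the second component remains unproved.
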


We divide the proof of the above theorem into three parts
(Subsections \ref{s1}, \ref{s2} and \ref{s3}), according to the
$\mathrm{Hom} (\gm^{-1} \wedge \gm , \gm_{n})$, $\mathrm{Hom}
(\gm^{-1} \wedge (\sum_{i=0}^{n-1} \gg^{i}), \gm_{n})$ and
$\mathrm{Hom}( \gg^{n}\wedge \gm_{n}, \gm_{n})$-valued components
of $t^{\rho}.$  Along the proof we shall use
the following notation: for any $a\in\gm_{n}$,
the $\rho$-twisted vector field on $P^{n+1}$  determined by $a$ will be denoted by
$X_{a}^{n+1}$; for any $a$, $b$ belonging to $\gm$ or $\gg^{i}$,
$[a,b]$  will always denote (as in the statement of Theorem \ref{main-t} above)
their Lie bracket in the Tanaka prolongation
$\gm (\gg^{0})^{\infty}$.

\subsection{The $\mathrm{Hom}(\gm^{-1} \wedge \gm ,\gm_{n})$-valued component}\label{s1}

\begin{prop}\label{part1} The torsion function
$t^{\rho} : P^{n+1} \ra \mathrm{Hom}( \gm^{-1}\wedge \gm ,
\gm_{n})$ has only homogeneous components of non-negative degree.
For any $H^{n+1}\in P^{n+1}$,
\begin{equation}\label{comp-0}
(t^{\rho}_{H^{n+1}})^{0}(a\wedge b) = - [a,b],\ \forall a\wedge b
\in \gm^{-1}\wedge \gm .
\end{equation}

\end{prop}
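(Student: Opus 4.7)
The plan is to mimic closely the argument used in Theorem \ref{step-1}(ii), exploiting the fact that the canonical graded frame $I_{\bar{H}^n}|_\gm$ is a Lie algebra isomorphism onto $\mathrm{gr}^{<0}(T_{\bar{H}^n}\bar{P}^{(n)})$ (Lemma \ref{first-pin}) and that any frame $F_{H^{n+1}} \in P^{n+1}$ is a lift of $I_{\bar{H}^n}$ and hence filtration preserving. For each $i\leq 0$, introduce the filtration $\mathcal{D}^{P^{n+1}}_i := (\pi^{n+1})_*^{-1}(\bar{\mathcal{D}}^{(n)}_i)$; Lemma \ref{usoara} applied to $\pi^{n+1}$ shows that the non-positive part of this flag filters the Lie algebra $\mathfrak{X}(P^{n+1})$, and the induced map on symbols agrees with $(\pi^{n+1})_*$.

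For the first claim, fix $a\in\gm^{-1}$ and $b\in\gm^j$ with $j<0$. Since $F_{H^{n+1}}$ preserves filtrations, the $\rho$-twisted vector fields $X^{n+1}_a$ and $X^{n+1}_b$ lie in $\Gamma(\mathcal{D}^{P^{n+1}}_{-1})$ and $\Gamma(\mathcal{D}^{P^{n+1}}_{j})$ respectively, hence $[X^{n+1}_a, X^{n+1}_b]\in \Gamma(\mathcal{D}^{P^{n+1}}_{-1+j})$. From the defining formula
\[
t^{\rho}_{H^{n+1}}(a\wedge b) = -(F_{H^{n+1}})^{-1}(\pi^{n+1})_*\bigl([X^{n+1}_a,X^{n+1}_b]_{H^{n+1}}\bigr)
\]
together with $F_{H^{n+1}}^{-1}(\bar{\mathcal{D}}^{(n)}_{-1+j})_{\bar{H}^n}=(\gm_n)_{-1+j}$, we deduce $t^{\rho}_{H^{n+1}}(a\wedge b)\in (\gm_n)_{-1+j}$, which is exactly the statement that only non-negative homogeneous components appear.

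For the second claim, project the above identity onto $(\gm_n)^{-1+j}=\gm^{-1+j}$. Using that $F_{H^{n+1}}$ lifts $I_{\bar{H}^n}$, one has $\pi_{\gm^{-1+j}}\circ F_{H^{n+1}}^{-1}|_{\bar{\mathcal{D}}^{(n)}_{-1+j}} = I_{\bar{H}^n}^{-1}\circ (\mathrm{gr}^{-1+j})^{\bar{\mathcal{D}}^{(n)}}$, so
\[
(t^{\rho}_{H^{n+1}})^0(a\wedge b) = -I_{\bar{H}^n}^{-1}\bigl((\mathrm{gr}^{-1+j})^{\bar{\mathcal{D}}^{(n)}}(\pi^{n+1})_*[X^{n+1}_a,X^{n+1}_b]_{H^{n+1}}\bigr).
\]
By Lemma \ref{usoara} the right-hand bracket on symbols reduces to $\{I_{\bar{H}^n}(a), I_{\bar{H}^n}(b)\}$, since $(\pi^{n+1})_*X^{n+1}_a = F_{H^{n+1}}(a)$ has symbol $I_{\bar{H}^n}(a)$ and similarly for $b$. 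Finally, Lemma \ref{first-pin} guarantees that $I_{\bar{H}^n}|_\gm$ is a Lie algebra isomorphism, hence $\{I_{\bar{H}^n}(a),I_{\bar{H}^n}(b)\}=I_{\bar{H}^n}([a,b])$, yielding $(t^{\rho}_{H^{n+1}})^0(a\wedge b)=-[a,b]$.

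No step appears to be a real obstacle: the argument is formally identical to the one in Theorem \ref{step-1}(ii), the only new input being the Lie-algebra property of $I_{\bar{H}^n}$ at level $n$, which is guaranteed inductively by Lemma \ref{first-pin}. The mildest care is needed in checking that the filtration $\{\mathcal{D}^{P^{n+1}}_i, i\leq 0\}$ is compatible with the Lie bracket of vector fields, but this is exactly the content of Lemma \ref{usoara}.
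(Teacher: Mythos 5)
Your proof is correct and follows essentially the same route as the paper's: the same filtration $\mathcal{D}^{P^{n+1}}_{i}=(\pi^{n+1})_{*}^{-1}(\bar{\mathcal D}^{(n)}_{i})$, the same use of the filtration-preserving and lifting properties of $F_{H^{n+1}}$, and the same appeal to Lemma \ref{usoara} together with the fact that $I_{\bar{H}^{n}}\vert_{\gm}$ is a Lie algebra isomorphism. No gaps.
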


\begin{proof}
The argument is similar to  the proof of Proposition \ref{non-neg}
and Theorem \ref{step-1} ii).
The sequence $\mathcal D_{i}^{n+1} := (\pi^{n+1})_{*}^{-1} \bar{\mathcal D}^{(n)}_{i}$
($i\leq 0$) is a filtration of ${\mathfrak X}(P^{n+1}).$
For any $a\in \gm^{-1}$ and
$b\in \gm^{i}$,  $X_{a}^{n+1} \in \Gamma ({\mathcal
D}^{n+1}_{-1})$, $X_{b}^{n+1} \in \Gamma ({\mathcal
D}^{n+1}_{i})$
and $[X^{n+1}_{a}, X^{n+1}_{b}] \in \Gamma ( {\mathcal D}_{i-1}^{n+1})$.
Since  $F_{H^{n+1}}: \gm_{n} \ra T_{\bar{H}^{n}}
\bar{P}^{(n)}$ is filtration preserving, we obtain that
$t^{\rho}_{H^{n+1}}(a\wedge b) = - (F_{H^{n+1}})^{-1}
(\pi^{n+1})_{*} ([X_{a}^{n+1}, X_{b}^{n+1}])\in (\gm_{n})_{i-1}$, which
proves the first statement. We now prove
(\ref{comp-0}). Since
$F_{H^{n+1}}$ is a lift of $I_{\bar{H}^{n}}: \gm_{n} \ra
\mathrm{gr} (T_{\bar{H}^{n}} \bar{P}^{(n)})$,
for any $-k\leq s\leq n$, 
$$
((\mathrm{gr}^{s})^{\bar{\mathcal D}^{(n)}}\circ F_{H^{n+1}}
)\vert_{(\gm_{n})_{s}} = I_{\bar{H}^{n}} \circ
\pi_{(\gm_{n})^{s}}\vert_{(\gm_{n})_{s}},\  (\pi_{(\gm_{n})^{s}} \circ
(F_{H^{n+1}})^{-1} )\vert_{\bar{\mathcal D}^{(n)}_{s}} =
(I_{\bar{H}^{n}})^{-1} \circ (\mathrm{gr}^{s})^{\bar{\mathcal
D}^{(n)}}.
$$
From these relations  and $(\pi^{n+1})_{*} ( [ X_{a}^{n+1},
X_{b}^{n+1}]_{H^{n+1}}) \in (\bar{\mathcal
D}^{(n)}_{i-1})_{\bar{H}^{n}}$, we obtain:
\begin{align*}
(t^{\rho}_{H^{n+1}})^{0} (a\wedge b) &= - (\pi_{(\gm_{n})^{i-1}}\circ
(F_{H^{n+1}})^{-1}\circ (\pi^{n+1})_{*} )( [ X_{a}^{n+1},
X_{b}^{n+1}]_{H^{n+1}})\\
&= - ( (I_{\bar{H}^{n}})^{-1}\circ
(\mathrm{gr}^{i-1})^{\bar{\mathcal D}^{(n)}}\circ
(\pi^{n+1})_{*} )([ X^{n+1}_{a}, X^{n+1}_{b}]_{H^{n+1}})\\
& = - (I_{\bar{H}^{n}})^{-1}\{ ((\mathrm{gr}^{-1})^{\bar{\mathcal
D}^{(n)}}\circ F_{H^{n+1}})(a),
((\mathrm{gr}^{i})^{\bar{\mathcal D}^{(n)}}\circ  F_{H^{n+1}})(b)\}\\
&= - (I_{\bar{H}^{n}})^{-1} \{ I_{\bar{H}^{n}} (a),
I_{\bar{H}^{n}} (b)\} = - [ a,b]
\end{align*}
(we  used Lemma \ref{usoara} and that
$I_{\bar{H}^{n}}\vert_{\gm}: \gm \ra \mathrm{gr}^{<0}
(T_{\bar{H}^{n}}\bar{P}^{(n)})$ is a Lie algebra isomorphism).
\end{proof}

\subsection{The $\mathrm{Hom} (\gm^{-1} \wedge (\sum_{i=0}^{n-1}\gg^{i}) ,
\gm_{n})$-valued component of $t^{\rho}$}\label{s2}

Since $\bar{\pi}^{(n)}: \bar{P}^{(n)} \ra \bar{P}^{(n-1)}$
satisfies the conditions from Theorem
\ref{sf0}, it is the quotient of a $G$-structure $\tilde{\pi}^{n}: \tilde{P}^{n} \ra\bar{P}^{(n-1)}$
with structure group $G^{n}GL_{n+1} (\gm_{n-1})$, by the normal subgroup
$GL_{n+1} (\gm_{n-1}).$
In particular, $\bar{P}^{(n)} = \tilde{P}^{n}/GL_{n+1}(\gm_{n-1})$
and the fundamental vector field $(\xi^{c})^{\tilde{P}^{n}}\in {\mathfrak X}(\tilde{P}^{n})$ generated by
$c\in \gg^{n}+\mathfrak{gl}_{n+1} (\gm_{n-1})$ projects to the fundamental vector field
$(\xi^{\bar{c}})^{\bar{P}^{(n)}}\in {\mathfrak X}(\bar{P}^{(n)})$ generated by
$\bar{c}\in \gg^{n}$ (the $\gg^{n}$-component of $c$).
Let $\tilde{\rho}$ be a
connection on the $G$-structure $\tilde{\pi}^{n}$ and $X_{a}^{n}\in {\mathfrak
X}(\tilde{P}^{n})$ the $\tilde{\rho}$-twisted vector fields ($a\in
\gm_{n-1}$). From (\ref{behaviour}), for any $A\in
G^{n}GL_{n+1}(\gm_{n-1})$  and $c\in \gg^{n} + \mathfrak{gl}_{n+1}
(\gm_{n-1})$,
\begin{equation}\label{beh}
(R_A)_{*}(X_{a}^{n}) = X_{A^{-1}(a)}^{n},\quad [
(\xi^{c})^{\tilde{P}^{n}}, X_{a}^{n}] = X^{n}_{c(a)}.
\end{equation}
The first relation (\ref{beh}) implies that $X^{n}_{a}$ is
$GL_{n+1} (\gm_{n-1})$-invariant, for any $a\in
(\gm_{n-1})_{-1}$  (because $A\vert_{(\gm_{n-1})_{-1}}=\mathrm{Id}$, for any $A\in GL_{n+1}(\gm_{n-1})$) and
descends to a vector field $\widehat{X}_{a}^{n}$ on $\bar{P}^{(n)}.$ The following lemma
collects the main properties of the vector fields
$\widehat{X}_{a}^{n}$.

\begin{lem}\label{mai-jos} i)  For any $a\in \gm^{-1}$ and $b\in\gg^{i}$ (with  $0\leq
i\leq n-1$),
\begin{equation}\label{f2}
[ \widehat{X}^{n}_{a}, \widehat{X}^{n}_{b}]  =
\widehat{X}_{[a,b]}^{n}\ \mathrm{mod}(\bar{\mathcal D}^{(n)}_{i}).
\end{equation}
ii) For any $c\in \gg^{n}\subset\mathfrak{gl} (\gm_{n-1})$,  $a \in \gm^{-1}$ and
 $b\in \sum_{i=0}^{n-1}\gg^{i}$,
\begin{equation}\label{f1}
[ (\xi^{c})^{\bar{P}^{(n)}}, \widehat{X}^{n}_{a}]=
\widehat{X}^{n}_{c(a)},\ [ (\xi^{c})^{\bar{P}^{(n)}}, \widehat{X}^{n}_{b}]=0.
\end{equation}
iii) Let $H^{n+1} \in P^{n+1}$,  $\bar{H}^{n}= \pi^{n+1} (H^{n+1})
\in \bar{P}^{(n)}$ and $a\in (\gm_{n-1})_{-1}$. Then
\begin{equation}\label{relatii-cheie-0}
F_{H^{n+1}}(a) =  (\widehat{X}^{n}_{a})_{\bar{H}^{n}} \
\mathrm{mod} ( T^{v}_{\bar{H}^{n}} \bar{P}^{(n)}).
\end{equation}
\end{lem}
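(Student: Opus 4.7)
The plan is to lift all three statements to the $G$-structure $\tilde{\pi}^n: \tilde{P}^n \to \bar{P}^{(n-1)}$, where the $\tilde{\rho}$-twisted fields $X^n_a$ live, using the factorisation $\bar{\pi}^{(n)}\circ \bar{q} = \tilde{\pi}^n$ (with $\bar{q}: \tilde{P}^n \to \bar{P}^{(n)}=\tilde{P}^n/GL_{n+1}(\gm_{n-1})$ the principal quotient), the bracket relations (\ref{beh}), and the torsion properties of $\tilde{\rho}$ from Theorem \ref{sf0} (C). I would prove (iii) first, since it is a pure identification of projections; then (ii), which is immediate from (\ref{beh}) plus degree counting; then (i), which is the genuinely delicate step.

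For (iii), I would fix any $\tilde{H}^n \in \bar{q}^{-1}(\bar{H}^n)$. By definition $(\widehat{X}^n_a)_{\bar{H}^n} = \bar{q}_*(X^n_a)_{\tilde{H}^n}$ and $(\tilde{\pi}^n)_*(X^n_a)_{\tilde{H}^n} = F_{\tilde{H}^n}(a)$, so $(\bar{\pi}^{(n)})_*(\widehat{X}^n_a)_{\bar{H}^n} = F_{\tilde{H}^n}(a)$. By the ``in particular'' part of Lemma \ref{ajut-p2} (ii), $(\bar{\pi}^{(n)})_* F_{H^{n+1}}(a) = F_{\bar{H}^n}(a) = F_{\tilde{H}^n}(a)$ for $a \in (\gm_{n-1})_{-1}$ (the last equality because $\mathrm{pr}^i_{(n+1)}$ is the identity in degrees $i\geq -1$, since $(\gm_{n-1})_{i+n+1}$ vanishes there). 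Hence the two vectors differ by an element of $\ker(\bar{\pi}^{(n)})_* = T^v_{\bar{H}^n}\bar{P}^{(n)}$, giving (\ref{relatii-cheie-0}).

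For (ii), since $\bar{P}^{(n)}$ is the $GL_{n+1}(\gm_{n-1})$-quotient of $\tilde{P}^n$ by the normal subgroup, the fundamental fields $(\xi^c)^{\tilde{P}^n}$ and $(\xi^c)^{\bar{P}^{(n)}}$ are $\bar{q}$-related for every $c \in \gg^n$. Pushing the second identity of (\ref{beh}) forward by $\bar{q}_*$ gives $[(\xi^c)^{\bar{P}^{(n)}}, \widehat{X}^n_y] = \widehat{X}^n_{c(y)}$ for every $y \in (\gm_{n-1})_{-1}$. Taking $y = a \in \gm^{-1}$ yields the first identity in (\ref{f1}). Taking $y = b \in \gg^i$ with $0\leq i \leq n-1$, $c(b) \in (\gm_{n-1})^{i+n}$, and since $\gm_{n-1}$ has no component in degree $\geq n$, $c(b) = 0$, yielding the second.

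For (i), I would compute $(\bar{\pi}^{(n)})_*[\widehat{X}^n_a, \widehat{X}^n_b]_{\bar{H}^n} = (\tilde{\pi}^n)_*[X^n_a, X^n_b]_{\tilde{H}^n}$, which by (\ref{homog-r1}) applied to $\tilde{\rho}$ equals $-F_{\tilde{H}^n}(t^{\tilde{\rho}}_{\tilde{H}^n}(a\wedge b))$. Theorem \ref{sf0} (C) supplies both $t^{\tilde{\rho}}_{\tilde{H}^n}(a\wedge b)\in (\gm_{n-1})_{i-1}$ and $(t^{\tilde{\rho}}_{\tilde{H}^n})^0(a\wedge b) = -[a,b]$, so $t^{\tilde{\rho}}_{\tilde{H}^n}(a\wedge b) = -[a,b] + r$ with $r \in (\gm_{n-1})_i$. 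Since $F_{\tilde{H}^n}$ is filtration preserving, $F_{\tilde{H}^n}(r) \in (\bar{\mathcal D}^{(n-1)}_i)_{\bar{H}^{n-1}}$. The same chain of definitions gives $(\bar{\pi}^{(n)})_*(\widehat{X}^n_{[a,b]})_{\bar{H}^n} = F_{\tilde{H}^n}([a,b])$. Therefore the $(\bar{\pi}^{(n)})_*$-image of $[\widehat{X}^n_a, \widehat{X}^n_b]_{\bar{H}^n} - (\widehat{X}^n_{[a,b]})_{\bar{H}^n}$ lies in $(\bar{\mathcal D}^{(n-1)}_i)_{\bar{H}^{n-1}}$, so the difference itself lies in $\bar{\mathcal D}^{(n)}_i = (\bar{\pi}^{(n)})_*^{-1}\bar{\mathcal D}^{(n-1)}_i$, which is (\ref{f2}). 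The main obstacle is arranging that the ``higher'' part $r$ of the torsion lands in the exact filtration piece needed for the mod $\bar{\mathcal D}^{(n)}_i$ congruence; this requires both assertions of Theorem \ref{sf0} (C) to mesh precisely.
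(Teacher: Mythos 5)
Your proposal is correct and follows essentially the same route as the paper's own proof: all three parts are lifted to the $G$-structure $\tilde{\pi}^{n}:\tilde{P}^{n}\to\bar{P}^{(n-1)}$, part (ii) comes from projecting the second relation (\ref{beh}) together with the degree count $c(b)\in(\gm_{n-1})^{i+n}=0$, part (iii) from the compatibility of the gradation $\tilde{H}^{n}$ with the quasi-gradation $\bar{H}^{n}$ combined with Lemma \ref{ajut-p2}(ii), and part (i) from both assertions of Theorem \ref{sf0}(C) applied to $t^{\tilde\rho}$ and the filtration-preserving property of $F_{\tilde{H}^{n}}$. The only difference is the order of the parts, which is immaterial.
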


\begin{proof} i) Let $H^{n}\in \tilde{P}^{n}$,
$F_{H^{n}} : \gm_{n-1} \ra T_{\bar{H}^{n-1}} \bar{P}^{(n-1)}$ the associated frame
and $a\in \gm^{-1}$, $b\in \gg^{i}$
with $0\leq i\leq n-1$. From the property C) in Theorem \ref{sf0},
we know that
$t^{\tilde{\rho}}_{H^{n}}(a\wedge b) = - (F_{H^{n}})^{-1}
(\tilde{\pi}^{n})_{*} ( [X_{a}^{n}, X_{b}^{n}]_{H^{n}})$ belongs to
$(\gm_{n-1})_{i-1}$
and its projection onto $(\gm_{n-1})^{i-1}$ is equal to $-[a,b].$
Using $(\tilde{\pi}^{n})_{*} ((X^{n}_{[a,b]})_{H^{n}} )= F_{H^{n}} ( [a,b])$ we obtain
$$
(F_{H^{n}})^{-1}
(\tilde{\pi}^{n})_{*} \left( [X_{a}^{n}, X_{b}^{n}]_{H^{n}}-
(X^{n}_{[a,b]})_{H^{n}}\right)=(F_{H^{n}})^{-1} (\tilde{\pi}^{n})_{*} ( [X_{a}^{n},
X_{b}^{n}]_{H^{n}}) -  [a,b] \in (\gm_{n-1})_{i}.
$$
Thus, $(\tilde{\pi}^{n})_{*} ( [X_{a}^{n},
X_{b}^{n}]_{H^{n}} - (X^{n}_{[a,b]})_{H^{n}})$ belongs to
$F_{H^{n}} ( (\gm_{n-1})_{i})=(\bar{\mathcal
D}^{(n-1)}_{i})_{\bar{H}^{n-1}}$.  But
since $a, b, [a,b]\in (\gm_{n-1})_{-1}$, the vector fields $X^{n}_{a}$,
$X^{n}_{b}$ and $X^{n}_{[a,b]}$ project to $\bar{P}^{(n)}$ and
$$
(\tilde{\pi}^{n})_{*}
( [X_{a}^{n}, X_{b}^{n}]_{H^{n}} - (X^{n}_{[a,b]})_{H^{n}}) =
(\bar{\pi}^{(n)})_{*} ([\widehat{X}_{a}^{n},
\widehat{X}_{b}^{n}]_{\bar{H}^{n}} -
(\widehat{X}^{n}_{[a,b]})_{\bar{H}^{n}}).
$$
We deduce that
$$
(\bar{\pi}^{(n)})_{*} ([\widehat{X}_{a}^{n},
\widehat{X}_{b}^{n}]_{\bar{H}^{n-1}} -
(\widehat{X}^{n}_{[a,b]})_{\bar{H}^{n-1}})
\in (\bar{\mathcal D}^{(n-1)}_{i})_{\bar{H}^{n-1}},
$$
which implies (\ref{f2}), because $(\bar{\pi}^{(n)})^{-1}_{*} (
\bar{\mathcal D}^{(n-1)}_{i} )= \bar{\mathcal D}^{(n)}_{i}.$

ii) In order to prove (\ref{f1}), let  $c\in \gg^{n}\subset \mathfrak{gl}^{n}(\gm_{n-1})$ and $a\in
(\gm_{n-1})_{-1}.$ The vector fields $X^{n}_{a}$, $X^{n}_{c(a)}$
and $(\xi^{c})^{\tilde{P}^{n}}$ on $\tilde{P}^{n}$ project to the vector fields $\widehat{X}^{n}_{a}$,
$\widehat{X}^{n}_{c(a)}$ and  $(\xi^{c})^{\bar{P}^{(n)}}$
on $\bar{P}^{(n)}$  (and $c(a) =0$,
$X^{n}_{c(a)} =0$,  for any $c\in \gg^{n}$ and $a\in
(\gm_{n-1})_{0}$). Claim ii) follows by
projecting the second relation (\ref{beh}) on $\bar{P}^{(n)}$.

iii)   Let $H^{n+1} \in P^{n+1}$, $\bar{H}^{n} = \pi^{n+1} (H^{n+1})\in \bar{P}^{(n)}$ and choose
$H^{n} \in \tilde{P}^{n}$ which projects to  $\bar{H}^{n}.$
For any $a\in (\gm_{n-1})_{-1}$,
\begin{equation}\label{pr-2}
(\bar{\pi}^{(n)})_{*}( (\widehat{X}^{n}_{a})_{\bar{H}^{n}})=
(\tilde{\pi}^{n})_{*} ((X^{n}_{a})_{H^{n}}) =
F_{H^{n}}(a)  = F_{\bar{H}^{n}}(a),
\end{equation}
where in the last equality we used that  $H^{n}\in
(\tilde{\pi}^{n})^{-1} (\bar{H}^{n-1})\subset\mathrm{Gr}(
T_{\bar{H}^{n-1}}\bar{P}^{(n-1)})$ is compatible with $\bar{H}^{n}
\in\mathrm{Gr}_{n+1} (T_{\bar{H}^{n-1}}\bar{P}^{(n-1)})$
(in particular, $F_{H^{n}} = F_{\bar{H}^{n}}$ on $(\gm_{n-1})_{-1}$).
On the other hand,
since $H^{n+1}\in P^{n+1}$,
$(\bar{\pi}^{(n)})_{*} F_{H^{n+1}} (a) = F_{\bar{H}^{n}}(a)$ (from
Lemma  \ref{ajut-p2} and $a \in (\gm_{n-1})_{-1}$).  We obtain
$(\bar{\pi}^{(n)})_{*}( (\widehat{X}^{n}_{a})_{\bar{H}^{n}} )= (\bar{\pi}^{(n)})_{*} F_{H^{n+1}} (a)$,
which implies (\ref{relatii-cheie-0}).
\end{proof}

\begin{prop}\label{part2} The  function
$t^{\rho}: {P}^{n+1} \ra \mathrm{Hom} (\gm^{-1}\wedge
(\sum_{i=0}^{n-1}\gg^{i}), \gm_{n})$ has only  homogeneous
components of non-negative degree. For any $H^{n+1} \in P^{n+1}$,
\begin{equation}\label{preliminar}
(t^{\rho}_{H^{n+1}})^{0}(a\wedge b)=  - [a,b],\quad a\wedge b \in
\gm^{-1}\wedge (\sum_{i=0}^{n-1}\gg^{i}).
\end{equation}
\end{prop}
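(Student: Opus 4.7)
The plan is to mimic the structure of the proof of Proposition \ref{part1} (which handles the case $b \in \gm$), but the straightforward filtration argument $[\Gamma(\mathcal D^{n+1}_{-1}), \Gamma(\mathcal D^{n+1}_i)] \subset \Gamma(\mathcal D^{n+1}_{i-1})$ fails for $i \geq 0$, because the bracket-filtration property of the Tanaka structure on $\bar P^{(n)}$ only holds for non-positive indices. Instead I would compute $(\pi^{n+1})_*[X_a^{n+1}, X_b^{n+1}]_{H^{n+1}}$ modulo $(\bar{\mathcal D}^{(n)}_i)_{\bar H^n}$ (for $a\in\gm^{-1}$ and $b\in\gg^i$, $0\leq i\leq n-1$) by transferring the computation to $\bar P^{(n)}$ via the vector fields $\widehat X^n_a, \widehat X^n_b$ of Lemma \ref{mai-jos}, and then apply $(F_{H^{n+1}})^{-1}$ (which is filtration-preserving) to read off both assertions from a single congruence modulo $(\gm_n)_i$.

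Concretely, decompose $X_c^{n+1}=Z_c+U_c$ for $c\in\{a,b\}$, where $Z_c$ is the $\rho$-horizontal lift of $\widehat X^n_c$ and $U_c=X_c^{n+1}-Z_c$ is $\rho$-horizontal with $(\pi^{n+1})_*(U_c)_{H^{n+1}}=(\xi^{v_c(H^{n+1})})^{\bar P^{(n)}}_{\bar H^n}$ for some $\gg^n$-valued function $v_c$ on $P^{n+1}$; this uses Lemma \ref{mai-jos} iii), applicable since $a,b\in(\gm_{n-1})_{-1}$. Expanding $[Z_a+U_a,Z_b+U_b]$ and using that $Z_a,Z_b$ are $\pi^{n+1}$-related to $\widehat X^n_a,\widehat X^n_b$, the $[Z_a,Z_b]$-term projects to $[\widehat X^n_a, \widehat X^n_b]_{\bar H^n}$; the three remaining brackets are controlled by Lemma \ref{mai-jos} ii): they contribute either $-\widehat X^n_{v_b(H^{n+1})(a)}$ with $v_b(H^{n+1})(a)\in\gg^{n-1}$ (hence lying in $\bar{\mathcal D}^{(n)}_{n-1}\subset\bar{\mathcal D}^{(n)}_i$) or purely $\bar\pi^{(n)}$-vertical terms (in $T^v\bar P^{(n)}\subset\bar{\mathcal D}^{(n)}_i$). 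Combining this with Lemma \ref{mai-jos} i), giving $[\widehat X^n_a, \widehat X^n_b]\equiv \widehat X^n_{[a,b]}$ mod $\bar{\mathcal D}^{(n)}_i$, and with Lemma \ref{mai-jos} iii) applied to $[a,b]\in(\gm_{n-1})_{-1}$, I obtain
\[
(\pi^{n+1})_*[X_a^{n+1}, X_b^{n+1}]_{H^{n+1}} \equiv F_{H^{n+1}}([a,b]) \quad \text{mod } (\bar{\mathcal D}^{(n)}_i)_{\bar H^n},
\]
so that $t^\rho_{H^{n+1}}(a\wedge b)\equiv -[a,b]$ mod $(\gm_n)_i$, yielding both claims at once.

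The delicate step will be justifying the projection of the mixed brackets, since $U_b$ is not $\pi^{n+1}$-projectable (its value as a tangent vector depends on $H^{n+1}$, not only on $\bar H^n$). The cleanest way is to use the flow $\phi^{Z_a}_t$, which does project via $\pi^{n+1}$ to the flow of $\widehat X^n_a$, and to differentiate $(\phi_{-t}^{Z_a})_* U_b(\phi_t^{Z_a}(H^{n+1}))$ at $t=0$; by the product rule this splits into the Lie bracket $[\widehat X^n_a,(\xi^{v_b(H^{n+1})})^{\bar P^{(n)}}]$ at $\bar H^n$ (controlled by Lemma \ref{mai-jos} ii)) plus a vertical contribution $(\xi^{Z_a(v_b)(H^{n+1})})^{\bar P^{(n)}}_{\bar H^n}$ coming from the derivative of the function $v_b$ along $Z_a$, and the analogous analyses for $[U_a,Z_b]$ and $[U_a,U_b]$ produce only $T^v\bar P^{(n)}$-valued contributions.
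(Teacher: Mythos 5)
Your argument is correct and follows essentially the same route as the paper: both decompose the twisted vector fields at $H^{n+1}$ into the $\rho$-horizontal lift of $\widehat{X}^{n}_{a}$ (resp. $\widehat{X}^{n}_{b}$) plus a correction projecting into $T^{v}\bar{P}^{(n)}$, and then invoke Lemma \ref{mai-jos} i)--iii) to identify the bracket modulo $\bar{\mathcal D}^{(n)}_{i}$ and read off both claims from a single congruence modulo $(\gm_{n})_{i}$. The only difference is bookkeeping at the non-projectable correction term: the paper freezes the $\gg^{n}$-valued generator at the point $H^{n+1}$ and exploits the tensoriality of $d\theta^{n+1}$ (the leftover $\pi^{n+1}$-vertical fundamental fields being killed by $\theta^{n+1}$), whereas you keep the varying coefficient $v_{b}$ and absorb its derivative along $Z_{a}$ into a harmless $\bar{\pi}^{(n)}$-vertical term.
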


\begin{proof} Let $a ,b\in (\gm_{n-1})_{-1}$. From
relation (\ref{relatii-cheie-0}),
$(\pi^{n+1})_{*} ( (X_{a}^{n+1})_{H^{n+1}}) = F_{H^{n+1}} (a) = 
(\widehat{X}^{n}_{a})_{\bar{H}^{n}}$ and similarly
$(\pi^{n+1})_{*} ( (X_{b}^{n+1})_{H^{n+1}}) =
(\widehat{X}^{n}_{b})_{\bar{H}^{n}}$
modulo $T^{v}_{\bar{H}^{n}} \bar{P}^{(n)}$.
Therefore,
there are $A, B\in
\mathfrak{gl}_{n+1} (\gm_{n}) +\mathrm{Hom}
(\sum_{i=0}^{n-1}\gg^{i}, \gg^{n})$ (the Lie algebra of the
structure group $\bar{G}$ of $\pi^{n+1}$) and $c,d\in \gg^{n}$
(the Lie algebra of the structure group of $\bar{\pi}^{(n)}$),
such that
\begin{align}
\nonumber&(X_{a}^{n+1})_{H^{n+1}} =
(\widetilde{\widehat{X}^{n}_{a}})_{H^{n+1}} +
(\widetilde{(\xi^{c})^{\bar{P}^{(n)}}})_{H^{n+1}}
+(\xi^{A})^{P^{n+1}}_{H^{n+1}},\\
\label{x-a-b}&(X_{b}^{n+1})_{H^{n+1}} =
(\widetilde{\widehat{X}^{n}_{b}})_{H^{n+1}} +
(\widetilde{(\xi^{d})^{\bar{P}^{(n)}}})_{H^{n+1}} +
(\xi^{B})^{P^{n+1}}_{H^{n+1}}
\end{align}
(for a vector field $Z\in {\mathfrak X}(\bar{P}^{(n)})$,
we denote by $\tilde{Z}$ its $\rho$-horisontal lift
to $P^{n+1}$). Then
\begin{align*}
&t^{\rho}_{H^{n+1}} (a\wedge b)  = (d\theta^{n+1})_{H^{n+1}}
(\widetilde{\widehat{X}^{n}_{a}} +
\widetilde{(\xi^{c})^{\bar{P}^{(n)}}} + (\xi^{A})^{P^{n+1}} ,
\widetilde{\widehat{X}^{n}_{b}} +
\widetilde{(\xi^{d})^{\bar{P}^{(n)}}}+ (\xi^{B})^{P^{n+1}})\\
& = (\widetilde{\widehat{X}^{n}_{a}} +
\widetilde{(\xi^{c})^{\bar{P}^{(n)}}} +
(\xi^{A})^{P^{n+1}})_{H^{n+1}} (f)   -
(\widetilde{\widehat{X}^{n}_{b}}
+\widetilde{(\xi^{d})^{\bar{P}^{(n)}}}+
(\xi^{B})^{P^{n+1}})_{H^{n+1}}
(g )\\
&  - \theta^{n+1}([ \widetilde{\widehat{X}^{n}_{a}} +
\widetilde{(\xi^{c})^{\bar{P}^{(n)}}} + (\xi^{A})^{P^{n+1}} ,
\widetilde{\widehat{X}_{b}^{n}} +
\widetilde{(\xi^{d})^{\bar{P}^{(n)}}} +
(\xi^{B})^{P^{n+1}}]_{H^{n+1}}),
\end{align*}
where
\begin{align*}
&f (H^{n+1}):= \theta^{n+1}_{H^{n+1}}
(\widetilde{\widehat{X}^{n}_{b}}+
\widetilde{(\xi^{d})^{\bar{P}^{(n)}}} + (\xi^{B})^{P^{n+1}} )=
(F_{H^{n+1}})^{-1}({\widehat{X}^{n}_{b}}+
(\xi^{d})^{\bar{P}^{(n)}})\equiv b\\
& g(H^{n+1}):= \theta^{n+1}_{H^{n+1}}
(\widetilde{\widehat{X}^{n}_{a}}+
\widetilde{(\xi^{c})^{\bar{P}^{(n)}}} + (\xi^{A})^{P^{n+1}} )=
(F_{H^{n+1}})^{-1}(\widehat{X}^{n}_{a}+
(\xi^{c})^{\bar{P}^{(n)}})\equiv a
\end{align*}
and the sign '$\equiv$'  means modulo $\gg^{n}.$ (We used
(\ref{relatii-cheie-0}),
$(F_{H^{n+1}})^{-1}((\xi^{c})^{\bar{P}^{(n)}}) =c\in \gg^{n}$ and $(F_{H^{n+1}})^{-1}((\xi^{d})^{\bar{P}^{(n)}})=d\in
\gg^{n}$ ). We obtain
\begin{align}
\nonumber t^{\rho}_{H^{n+1}} (a\wedge b)  &\equiv - \theta^{n+1}([
\widetilde{\widehat{X}^{n}_{a}} +
\widetilde{(\xi^{c})^{\bar{P}^{(n)}}} + (\xi^{A})^{P^{n+1}} ,
\widetilde{\widehat{X}_{b}^{n}} +
\widetilde{(\xi^{d})^{\bar{P}^{(n)}}} +
(\xi^{B})^{P^{n+1}}]_{H^{n+1}})\\
\nonumber&\equiv -  (F_{H^{n+1}})^{-1} ( [ \widehat{X}^{n}_{a} +
(\xi^{c})^{\bar{P}^{(n)}},
\widehat{X}^{n}_{b} + (\xi^{d})^{\bar{P}^{(n)}}]_{\bar{H}^{n}})\\
\nonumber&\equiv -  (F_{H^{n+1}})^{-1} ( [ \widehat{X}^{n}_{a},
\widehat{X}^{n}_{b} ] _{\bar{H}^{n}}+ [ (\xi^{c})^{\bar{P}^{(n)}},
\widehat{X}^{n}_{b}] _{\bar{H}^{n}}+ [ \widehat{X}^{n}_{a},
(\xi^{d})^{\bar{P}^{(n)}}]_{\bar{H}^{n}})\\
\label{f0}&  \equiv-  (F_{H^{n+1}})^{-1} ( [ \widehat{X}^{n}_{a},
\widehat{X}^{n}_{b} ] _{\bar{H}^{n}}+ (\widehat{X}^{n}_{c(b)})_{\bar{H}^{n}} -
(\widehat{X}^{n}_{d(a)})_{\bar{H}^{n}}),
\end{align}
where $\bar{H}^{n} = \pi^{n+1} (H^{n+1})$ and
we used (\ref{f1}) (we remark that $c(b) =0$ when $b\in
\sum_{i=0}^{n-1}\gg^{i}$ and similarly for $d(a)$).
Suppose  now that $a\in \gm^{-1}$ and that $b\in\gg^{i}$ (with
$0\leq i\leq n-1$).  Using (\ref{f2}), (\ref{relatii-cheie-0}),
(\ref{f0}) and $c(b)=0$  we obtain
\begin{align*}
t^{\rho}_{H^{n+1}} (a\wedge b) & = - (F_{H^{n+1}})^{-1}
( \widehat{X}^{n}_{[a,b]} - \widehat{X}^{n}_{d(a)})\quad \mathrm{mod} (\gm_{n})_{i}\\
& =- [a, b] +  d(a)\quad \mathrm{mod} (\gm_{n})_{i}.
\end{align*}
Since $d\in \gg^{n} \subset \mathfrak{gl}^{n}( \gm_{n-1})$,
$d(a) \in \gg^{n-1}$. Also, $[a,b] = - b(a) \in \gg^{i-1}$.
We deduce that
$t^{\rho}_{H^{n+1}}\in \mathrm{Hom} (\gm^{-1} \wedge (\sum_{i
=0}^{n-1}\gg^{i}), \gm_{n})$ has only components of non-negative
homogeneous degree and  relation (\ref{preliminar}) holds, for any
$a\wedge b \in \gm^{-1} \wedge (\sum_{i=0}^{n-1} \gg^{i})$.
\end{proof}

\subsection{The $\mathrm{Hom}( \gg^{n}\wedge \gm_{n}, \gm_{n})$-valued component}\label{s3}

This is the last component of the torsion function $t^{\rho}$
which  needs to be studied, in order to conclude the proof of
Theorem \ref{main-t}.

\begin{prop}\label{part3} The  function $t^{\rho}: P^{n+1}\ra \mathrm{Hom}( \gg^{n}\wedge \gm_{n}, \gm_{n})$
has non-negative reduced homogeneous components and satisfies
$$
(t^{\rho}_{H^{n+1}})^{0} (a\wedge b)=-  [a,b],\quad \forall
a\wedge b \in \gg^{n}\wedge  \gm .
$$
\end{prop}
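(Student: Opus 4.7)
The proof follows the pattern of case II in Theorem~\ref{step-1}, exploiting the action of $G^{n}GL_{n+1}(\gm_{n-1})$ on $P^{n+1}$ constructed in Section~\ref{action-g-n} together with its equivariance of the soldering form (Lemma~\ref{inv-sold}). Fix $a\in\gg^{n}$ and $H^{n+1}\in P^{n+1}$, with $\bar{H}^{n}=\pi^{n+1}(H^{n+1})$. Because $F_{H^{n+1}}\vert_{\gg^{n}}=\nu^{n}_{\bar{H}^{n}}$ is the vertical parallelism of $\bar{\pi}^{(n)}$, the $\rho$-twisted vector field $X^{n+1}_{a}$ is the $\rho$-horisontal lift of the fundamental field $(\xi^{a})^{\bar{P}^{(n)}}$; on the other hand, Proposition~\ref{cheie} asserts $(\pi^{n+1})_{*}(\xi^{a})^{P^{n+1}}=(\xi^{a})^{\bar{P}^{(n)}}$. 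Consequently the difference $Y:=X^{n+1}_{a}-(\xi^{a})^{P^{n+1}}$ is $\pi^{n+1}$-vertical. Writing $Y=\sum_{s}f_{s}(\xi^{A_{s}})^{P^{n+1}}$ in a basis $\{A_{s}\}$ of $\mathrm{Lie}(\bar{G})=\mathfrak{gl}_{n+1}(\gm_{n})+\mathrm{Hom}(\sum_{i=0}^{n-1}\gg^{i},\gg^{n})$, the torsion splits as
$$
t^{\rho}_{H^{n+1}}(a\wedge b)=-\theta^{n+1}\bigl([(\xi^{a})^{P^{n+1}},X^{n+1}_{b}]_{H^{n+1}}\bigr)-\theta^{n+1}\bigl([Y,X^{n+1}_{b}]_{H^{n+1}}\bigr).
$$

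The first bracket is evaluated using the Lie-derivative identity $\theta^{n+1}([U,V])=U(\theta^{n+1}(V))-L_{U}(\theta^{n+1})(V)$, together with the fact that $\theta^{n+1}(X^{n+1}_{b})=b$ is constant and Lemma~\ref{inv-sold}, giving $(\rho^{n})_{*}(a)(b)$. For the second bracket, the contributions $X^{n+1}_{b}(f_{s})\xi^{A_{s}}$ are killed by $\theta^{n+1}$, while the equivariance relation $L_{\xi^{A_{s}}}(\theta^{n+1})=-A_{s}\circ\theta^{n+1}$ from (\ref{inv-GL}) produces $\sum_{s}f_{s}(H^{n+1})A_{s}(b)=A(b)$, where $A:=\sum_{s}f_{s}(H^{n+1})A_{s}\in\mathrm{Lie}(\bar{G})$. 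The master formula is therefore
$$
t^{\rho}_{H^{n+1}}(a\wedge b)=-(\rho^{n})_{*}(a)(b)-A(b).
$$

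The claim is now pure degree bookkeeping. The summand $\mathfrak{gl}_{n+1}(\gm_{n})$ automatically annihilates the non-negative part $\sum_{s\geq 0}\gg^{s}$ of $\gm_n$ (any image would have degree $>n$, which is trivial in $\gm_{n}$), while the $\mathrm{Hom}$-summand annihilates $\gm+\gg^{n}$ and sends $\gg^{i}$ into $\gg^{n}$ for $0\leq i\leq n-1$. Hence $A(b)\in(\gm_{n})_{j+n+1}$ for $b\in\gm^{j}$, $A(b)\in\gg^{n}\subset(\gm_{n})_{n}$ for $b\in\gg^{i}$ ($0\leq i\leq n-1$), and $A(b)=0$ for $b\in\gg^{n}$. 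Similarly $(\rho^{n})_{*}(a)$ acts as $a\in\mathfrak{gl}^{n}(\gm_{n-1})$ on $\gm_{n-1}$ and vanishes on $\gg^{n}$, so it sends $\gm^{j}$ into $(\gm_{n-1})^{j+n}\subset (\gm_n)_{j+n}$ and vanishes on $\gg^{i}$ for every $i\geq 0$ (its image would then have degree $i+n>n-1$). Collecting the three cases yields the required inclusions $t^{\rho}_{H^{n+1}}(\gg^{n}\wedge(\gm_{n})^{j})\subset(\gm_{n})_{\min\{n+j,n\}}$. Finally, for $b\in\gm^{j}$ the $(j+n)$-component of $A(b)$ vanishes, so the degree-zero reduced part of $t^{\rho}_{H^{n+1}}(a\wedge b)$ equals $-a(b)$, and this agrees with $-[a,b]$ via the Tanaka bracket identity $[b,a]=-a(b)$ from (\ref{mixed}). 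The one genuinely delicate step is the verticality of $Y$, which hinges on Proposition~\ref{cheie} and on the matching of $F_{H^{n+1}}\vert_{\gg^{n}}$ with $\nu^{n}_{\bar{H}^{n}}$; once this is in place, the computation parallels case II of Theorem~\ref{step-1} almost verbatim.
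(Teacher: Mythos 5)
Your proposal is correct and follows essentially the same route as the paper's proof: split $X^{n+1}_{a}=(\xi^{a})^{P^{n+1}}+Y$ with $Y$ vertical (via Proposition \ref{cheie} and the identification of $F_{H^{n+1}}\vert_{\gg^{n}}$ with the vertical parallelism), evaluate the two brackets with Lemma \ref{inv-sold} and the $\bar{G}$-equivariance of $\theta^{n+1}$ to get $t^{\rho}_{H^{n+1}}(a\wedge b)=-(\rho^{n})_{*}(a)(b)-A(b)$, and finish by degree bookkeeping. The degree analysis you give is slightly more detailed than the paper's but reaches the same conclusions.
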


\begin{proof} Let $a\in \gg^{n}$ and $b\in \gm_{n}$.
Recall that $G^{n}GL_{n+1} (\gm_{n-1})$ acts on $P^{n+1}$
and the fundamental vector field  $(\xi^{a})^{P^{n+1}}$
of this action, generated by $a\in \gg^{n}
\subset\gg^{n} + \mathfrak{gl}_{n+1} (\gm_{n-1})$, is $\pi^{n+1}$-projectable and
$(\pi^{n+1})_{*}(\xi^{a})^{P^{n+1}} = (\xi^{a})^{\bar{P}^{(n)}}$
(see Proposition \ref{cheie}).  On the other hand, $X_{a}^{n+1}\in {\mathfrak X}(P^{n+1})$ is
the $\rho$-horisontal lift of $(\xi^{a})^{\bar{P}^{(n)}}$. We obtain that
$Y:= X_{a}^{n+1} - (\xi^{a})^{P^{n+1}}$ is $\pi^{n+1}$-vertical.
We write
\begin{align}
\nonumber t^{\rho}_{H^{n+1}}(a\wedge b) &= - \theta^{n+1} (
[X_{a}^{n+1}, X_{b}^{n+1}]_{H^{n+1}})\\
\label{c}& = - \theta^{n+1} ([
(\xi^{a})^{P^{n+1}}, X_{b}^{n+1}
]_{H^{n+1}}) - \theta^{n+1}( [ Y, X_{b}^{n+1}]_{H^{n+1}}).
\end{align}
We need to compute the last row from the right hand side of
(\ref{c}). For the first term, we use Lemma \ref{inv-sold}
and that $\theta^{n+1} (X_{b}^{n+1}) = b$ is constant:
\begin{equation}\label{c1}
\theta^{n+1} (
[(\xi^{a})^{P^{n+1}}, X_{b}^{n+1}])\\
= - L_{(\xi^{a})^{P^{n+1}}}(\theta^{n+1}) (X_{b}^{n+1}) =
(\rho^{n})_{*}(a)(b).
\end{equation}
To compute the second term, we remark that,
since $Y$ is $\pi^{n+1}$-vertical, there is $A\in
\mathfrak{gl}_{n+1}(\gm_{n})+\mathrm{Hom}
(\sum_{i=0}^{n-1}\gg^{i}, \gg^{n})$, such that $Y_{H^{n+1}} =
(\xi^{A})^{P^{n+1}}_{H^{n+1}}.$ The soldering form $\theta^{n+1}$
of $\pi^{n+1}$ is $\bar{G}$-equivariant (see relation
(\ref{inv-GL})). Like in the computation
(\ref{comput-a}) from the proof of Theorem \ref{step-1}, we obtain
$\theta^{n+1}( [ Y, X_{b}^{n+1}]_{H^{n+1}}) =  A(b).$
This fact, together with (\ref{c}) and (\ref{c1}),  imply that
$$
t^{\rho}_{H^{n+1}} (a\wedge b) =-
 (\rho^{n})_{*}(a) (b) - A(b),\quad a\in \gg^{n}, \ b\in
 \gm_{n}.
$$
If $b\in \gm^{i}$ (with $i\leq -1$) then  $(\rho^{n})_{*}(a) (b) =
a(b) \in (\gm_{n-1})^{i+n}$  and
 $A(b) \in (\gm_{n})_{i+n+1}$. We deduce that
 $t^{\rho}_{H^{n+1}}(a\wedge b)\in (\gm_{n})_{i+n}$ and
$(t^{\rho}_{H^{n+1}})^{0}(a\wedge b) = - a(b)= - [a,b]$. If $b\in
\gg^{j}$ ($0\leq j\leq n$) then $(\rho^{n})_{*}(a)(b)=0$ and
$t^{\rho}_{H^{n+1}}(a\wedge b) = - A(b)\in \gg^{n}$.
\end{proof}

The proof of Theorem \ref{main-t} is now completed.

\section{Variation of the torsion $t^{\rho}$ of
${\pi}^{n+1}$}\label{variation-g-n}

In this section we define and study the $(n+1)$-torsion of the Tanaka structure
$(\mathcal D_{i}, \pi_{G})$. We
preserve the setting from  Section \ref{torsion-g-n}.
In particular, $\rho$ is a connection on the $G$-structure $\pi^{n+1} : P^{n+1} \ra \bar{P}^{(n)}.$

\begin{prop}\label{deg-2} i) Let $0\leq i\leq n-1$.
The map
$$
t^{\rho} : P^{n+1} \ra
\mathrm{Hom} (\gm^{-1}\wedge \gg^{i}, (\gm_{n})^{-i+1} +\cdots + (\gm_{n})^{n-1})
$$
is independent of the connection $\rho$.

\medskip

ii) Let $i\leq n+1$. The homogeneous component $(t^{\rho})^{i}$ of degree $i$
of $t^{\rho}: P^{n+1} \ra \mathrm{Hom}(\gm^{-1} \wedge \gm ,
\gm_{n})$ is independent of the connection $\rho$.

\end{prop}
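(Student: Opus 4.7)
The plan is to apply the change-of-connection formula from Theorem \ref{invariancy} ii) to the $G$-structure $\pi^{n+1} : P^{n+1} \ra \bar{P}^{(n)}$, and then to read off the degrees at which the correction term is forced to lie. Recall from Proposition \ref{pi-2} that the structure group of $\pi^{n+1}$ is $\bar{G} = \mathrm{Id} + \mathfrak{gl}_{n+1}(\gm_{n}) + \mathrm{Hom}(\sum_{i=0}^{n-1}\gg^{i}, \gg^{n})$, so its Lie algebra decomposes as $\bar{\gg} = \mathfrak{gl}_{n+1}(\gm_{n}) + \mathrm{Hom}(\sum_{i=0}^{n-1}\gg^{i}, \gg^{n})$. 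The first summand raises degrees by at least $n+1$; the second sends $\sum_{i=0}^{n-1}\gg^{i}$ into $\gg^{n} = (\gm_{n})^{n}$ and annihilates both $\gm$ and $\gg^{n}$. Given any second connection $\rho^{\prime}$, at a point $H^{n+1}\in P^{n+1}$ the differences of the twisted vector fields $(X^{n+1\,\prime}_{a})_{H^{n+1}} - (X^{n+1}_{a})_{H^{n+1}} = (\xi^{A})^{P^{n+1}}_{H^{n+1}}$ and analogously for $b$ produce $A, B\in \bar{\gg}$, and Theorem \ref{invariancy} ii) gives
\[
t^{\rho^{\prime}}_{H^{n+1}}(a\wedge b) - t^{\rho}_{H^{n+1}}(a\wedge b) = - A(b) + B(a).
\]
The entire proof reduces to estimating the degree of the right-hand side in the two cases.

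For part ii), take $a\in \gm^{i}$ and $b\in \gm^{j}$ with $i,j\leq -1$. The summand $\mathrm{Hom}(\sum_{s}\gg^{s}, \gg^{n})$ of $\bar{\gg}$ annihilates $\gm$, so only the $\mathfrak{gl}_{n+1}(\gm_{n})$-components of $A$ and $B$ contribute, and they shift degree by at least $n+1$. Hence $A(b)\in (\gm_{n})_{j+n+1}$ and $B(a)\in (\gm_{n})_{i+n+1}$, so in terms of reduced degree (degree minus $i+j$) the correction $-A(b) + B(a)$ has reduced degree at least $n+1 - i \geq n+2$ (using $i\leq -1$) and at least $n+1 - j\geq n+2$. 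Therefore all components $(t^{\rho})^{m}$ with $m\leq n+1$ coincide for $\rho$ and $\rho^{\prime}$, which is exactly the claim.

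For part i), take $a\in \gm^{-1}$ and $b\in \gg^{i}$ with $0\leq i\leq n-1$. The $\mathfrak{gl}_{n+1}(\gm_{n})$-component of $A$ sends $b$ into $(\gm_{n})_{i+n+1}$, which is zero because $\gm_{n}$ has top degree $n$ and $i+n+1 > n$. The $\mathrm{Hom}(\gg^{i}, \gg^{n})$-component of $A$ sends $b$ into $\gg^{n} = (\gm_{n})^{n}$; so $A(b)\in (\gm_{n})^{n}$. Likewise the $\mathfrak{gl}_{n+1}(\gm_{n})$-component of $B$ sends $a\in\gm^{-1}$ into $(\gm_{n})_{-1+n+1} = (\gm_{n})^{n}$, while the other component annihilates $\gm$; so $B(a)\in (\gm_{n})^{n}$. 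Hence $-A(b) + B(a) \in (\gm_{n})^{n}$, and therefore every component of $t^{\rho}_{H^{n+1}}(a\wedge b)$ of degree strictly less than $n$ is connection-independent; this contains in particular the specified range $(\gm_{n})^{-i+1}+\cdots +(\gm_{n})^{n-1}$.

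There is no genuine obstacle here: the work is purely a bookkeeping of graded degrees, entirely analogous to Proposition \ref{var-tor-comp} but now keeping track of the extra $\mathrm{Hom}(\sum \gg^{i}, \gg^{n})$-summand of $\bar{\gg}$ and using that $\gm_{n}$ is concentrated in degrees $-k,\dots ,n$ to force the high-degree contributions of $\mathfrak{gl}_{n+1}(\gm_{n})$ to vanish on $\sum_{i\geq 0}\gg^{i}$. The only point that has to be handled carefully is precisely this truncation, which collapses $A(b)$ to its $\gg^{n}$-part for $b$ of non-negative degree and yields the sharp range stated in part i).
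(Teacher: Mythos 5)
Your proposal is correct and follows essentially the same route as the paper: apply the change-of-connection formula of Theorem \ref{invariancy} ii) with $A,B$ in the Lie algebra $\mathfrak{gl}_{n+1}(\gm_{n})+\mathrm{Hom}(\sum_{i=0}^{n-1}\gg^{i},\gg^{n})$ of the structure group from Proposition \ref{pi-2}, and then bound the degrees of $A(b)$ and $B(a)$ in the two cases. Your degree bookkeeping (in particular, that the correction lands in $\gg^{n}=(\gm_{n})^{n}$ in case i) and has reduced degree at least $n+2$ in case ii)) matches the paper's estimates, with slightly more detail spelled out.
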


\begin{proof} Consider  another connection $\rho^{\prime}$ on
$\pi^{n+1} : P^{n+1} \ra \bar{P}^{(n)}.$ From Theorem
\ref{invariancy} ii), for any $H^{n+1}\in P^{n+1}$ and $a,b\in
\gm_{n}$, there are $A, B\in\mathfrak{gl}_{n+1}(\gm_{n})+
\mathrm{Hom} (\sum_{i=0}^{n-1}\gg^{i}, \gg^{n})$, such that
\begin{equation}\label{rel-degree}
t^{\rho^{\prime}}_{H^{n+1}}(a\wedge b) =
t^{\rho}_{H^{n+1}}(a\wedge b) -  A(b) +  B(a).
\end{equation}
If $a\in \gm^{-1}$ and $b\in \gg^{i}$
($0\leq i\leq n-1$) then $A(b) , B(a)
\in \gg^{n}$ and, from (\ref{rel-degree}),   we obtain claim i). Let $a\in \gm^{-1}$ and $b\in
\gm^{j}.$ Then $\mathrm{deg} A(b) \geq n +1 + j > (-1 + j ) + i$
and $\mathrm{deg}B(a) = n > (-1+ j) +i$, for any $i\leq n+1$
(because $j<0$).
Relation (\ref{rel-degree}) again implies claim ii).
\end{proof}

\begin{defn}\label{torsion-n-def} i) The vector space
$\mathrm{Tor}^{n+1}(\gm_{n}):= \mathrm{Hom}^{n+1}
(\gm^{-1}\wedge \gm , \gm_{n})+ \sum_{i=0}^{n-1}\mathrm{Hom}
(\gm^{-1}\wedge \gg^{i}, \gg^{n-1}) $ is called the {\bf space of
$(n+1)$-torsions}.\

ii) Let $\rho$ be a connection on $\pi^{n+1}: P^{n+1}\ra
\bar{P}^{(n)}$. The function
$$
\bar{t}^{(n+1)}: P^{n+1} \rightarrow \mathrm{Tor}^{n+1}(\gm_{n})
$$
defined by
\begin{equation}\label{def-bar-t}
\bar{t}^{(n+1)}_{H^{n+1}}(a\wedge b) = \begin{cases}
(t^{\rho}_{H^{n+1}})^{n+1} (a\wedge b),\ a\wedge b\in \gm^{-1}\wedge \gm\\
(t^{\rho}_{H^{n+1}})^{n-i} (a\wedge b),\ a\wedge b\in \gm^{-1}\wedge \gg^{i},\\
\end{cases}
\end{equation}
for any $H^{n+1} \in P^{n+1}$ and $0\leq i\leq n-1$,
is called the {\bf $(n+1)$-torsion of the Tanaka structure  $(\mathcal D_{i}, \pi_{G})$}.
In (\ref{def-bar-t}) the expression $(t^{\rho}_{H^{n+1}})^{n-i} (a\wedge b)$,
for $a\wedge b\in \gm^{-1} \wedge \gg^{i}$, denotes
the projection of $t^{\rho}_{H^{n+1}} (a\wedge b)$ on $\gg^{n-1}.$
\end{defn}

From Proposition \ref{deg-2}, $\bar{t}^{(n+1)}$ is independent of
the choice of $\rho .$

\begin{thm}\label{modificare-tors} For any $H^{n+1} \in P^{n+1}$ and $\mathrm{Id}+A \in
\bar{G}$,
$$
\bar{t}^{(n+1)}_{H^{n+1}(\mathrm{Id}+ A)} =
\bar{t}^{(n+1)}_{H^{n+1}} + \partial^{(n+1)} A.
$$
Above
\begin{equation}\label{partial-map}
\nonumber{\partial}^{(n+1)} :\ggl_{n+1} (\gm_{n}) +
\sum_{i=0}^{n-1}\mathrm{Hom} (\gg^{i}, \gg^{n})\ra
\mathrm{Tor}^{n+1} (\gm_{n})
\end{equation}
maps $\ggl_{n+1} (\gm_{n})$ into $\mathrm{Hom}^{n+1}
(\gm^{-1}\wedge \gm , \gm_{n})$ and $\mathrm{Hom} (\gg^{i},
\gg^{n})$ into $\mathrm{Hom} (\gm^{-1}\wedge \gg^{i}, \gg^{n-1})$
($0\leq i\leq n-1$) and is defined by
\begin{align*}
&(\partial^{(n+1)} A_{n+1})(a\wedge b) := A_{n+1}^{n+1} ([a,b] )-
[A_{n+1}^{n+1}(a), b]- [a, A_{n+1}^{n+1}(b)],\ a\wedge b\in \gm^{-1}\wedge \gm\\
&(\partial^{(n+1)}  A^{n-i})(a\wedge b) := - [a, A^{n-i}(b)],\ a\wedge b\in
\gm^{-1}\wedge  \gg^{i},
\end{align*}
for any $A_{n+1} \in \mathfrak{gl}_{n+1} (\gm_{n})$ and $A^{n-i}
\in \mathrm{Hom} (\gg^{i}, \gg^{n})$.
\end{thm}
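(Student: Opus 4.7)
The statement is the higher-order analogue of Proposition \ref{prel-prol}, and the proof follows the same pattern. Since $\bar{t}^{(n+1)}$ is connection-independent by Proposition \ref{deg-2}, I pick any connection $\rho$ on $\pi^{n+1}$ and apply the $\bar{G}$-equivariance of the torsion function from Theorem \ref{invariancy} i):
\[
t^{\rho}_{H^{n+1}g}(a\wedge b) \;=\; g^{-1}\bigl(t^{\rho}_{H^{n+1}}(g(a)\wedge g(b))\bigr),\qquad g=\mathrm{Id}+A\in\bar{G}.
\]
The idea is to expand the right-hand side using bilinearity of $t^{\rho}$ and the Neumann series $g^{-1}=\mathrm{Id}-A+A^{2}-\cdots$, and then project onto the two slots of $\mathrm{Tor}^{n+1}(\gm_{n})$.

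\textbf{Degree bookkeeping.} Write $A = A_{n+1} + B$ with $A_{n+1}\in\mathfrak{gl}_{n+1}(\gm_{n})$ and $B=\sum_{i=0}^{n-1}A^{n-i}\in\mathrm{Hom}(\sum_{i=0}^{n-1}\gg^{i},\gg^{n})$. Two elementary facts control the whole computation: (i) $B$ annihilates $\gm$ and $\gg^{n}$, while $A_{n+1}$ annihilates every $\gg^{i}$ with $i\geq 0$ (its image would lie in degrees $\geq i+n+1>n$); in particular $A_{n+1}(\gm^{-1})\subset\gg^{n}$ and the action of $A_{n+1}$ on $\gm^{-1}$ reduces to $A_{n+1}^{n+1}$; (ii) by Theorem \ref{main-t} i), $t^{\rho}$ has only non-negative reduced homogeneous components, so any cross-term whose total reduced degree exceeds the projection slot is killed. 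A short composition check gives $A_{n+1}\circ B=0$ (because $B$ lands in $\gg^{n}$) and $B\circ B=0$ (because $B$ annihilates $\gg^{n}$), while the sole remaining quadratic composition $B\circ A_{n+1}$ strictly raises reduced degree past $n+1$. Consequently, all $A^{k}$ with $k\geq 2$ drop out of $\bar{t}^{(n+1)}$, and only the terms linear in $A$ contribute.

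\textbf{Identification with $\partial^{(n+1)}A$ and main obstacle.} On $\gm^{-1}\wedge\gm$, the degree-$(n+1)$ projection retains exactly the three pieces
\[
-A_{n+1}^{n+1}\bigl((t^{\rho}_{H^{n+1}})^{0}(a\wedge b)\bigr) + (t^{\rho}_{H^{n+1}})^{0}\bigl(A_{n+1}^{n+1}(a)\wedge b\bigr) + (t^{\rho}_{H^{n+1}})^{0}\bigl(a\wedge A_{n+1}^{n+1}(b)\bigr),
\]
and substituting the degree-zero identity $(t^{\rho})^{0}(x\wedge y)=-[x,y]$ from Theorem \ref{main-t} ii) (which applies because each argument lies in $\gm^{-1}\wedge\gm_{n}+\gg^{n}\wedge\gm$) produces $A_{n+1}^{n+1}([a,b])-[A_{n+1}^{n+1}(a),b]-[a,A_{n+1}^{n+1}(b)]$. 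On $\gm^{-1}\wedge\gg^{i}$, the degree-$(n-i)$ projection eliminates all terms except $(t^{\rho}_{H^{n+1}})^{0}(a\wedge A^{n-i}(b))=-[a,A^{n-i}(b)]$, matching the second part of $\partial^{(n+1)}A$. The main obstacle is the degree bookkeeping above: one must verify, once and for all, that every higher iterate $A^{k}$ and every cross-term mixing $A_{n+1}$ with $B$ either vanishes for algebraic reasons or lies in a reduced degree strictly exceeding the slot projected to; once this is done, everything else is a mechanical expansion driven by Theorem \ref{main-t}.
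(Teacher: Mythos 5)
Your proof is correct and follows essentially the same route as the paper's: apply the $\bar{G}$-equivariance of $t^{\rho}$ (Theorem \ref{invariancy}), split $A=A_{n+1}+B$, use the degree estimates of Theorem \ref{main-t} to discard everything but the terms linear in $A$ after projecting onto the relevant homogeneous slot, and convert those via the degree-zero identity $(t^{\rho})^{0}(x\wedge y)=-[x,y]$ into $\partial^{(n+1)}A$. The only slip is that your quadratic ``composition check'' omits $A_{n+1}\circ A_{n+1}$, but since that composition raises degree by at least $2(n+1)>n+1$ it is harmless and the argument goes through.
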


\begin{proof}
From Theorem \ref{invariancy},
\begin{equation}\label{inv-G-bar}
t^{\rho}_{H^{n+1}(\mathrm{Id}+A)} (a\wedge b) = (\mathrm{Id}+B)
t^{\rho}_{H^{n+1}}((\mathrm{Id}+A)( a)\wedge (\mathrm{Id}+A)(b)),\
a,b\in \gm_{n},
\end{equation}
where
$\mathrm{Id} + B := (\mathrm{Id}+A)^{-1}$. If $A = A_{n+1} + \sum_{i=1}^{n} A^{i}$ and
$B = B_{n+1} + \sum_{i=1}^{n} B^{i}$, with $A_{n+1}, B_{n+1} \in \mathfrak{gl}_{n+1} (\gm_{n})$ and
$A^{i}, B^{i} \in \mathrm{Hom} (\gg^{n-i}, \gg^{n})$, then
$B^{i}=  - A^{i}$ ($1\leq i\leq n$)
and $B_{n+1}^{n+1} = - A^{n+1}_{n+1}$
(easy check).
We write (\ref{inv-G-bar})
in the equivalent form
\begin{align*}
&t^{\rho}_{H^{n+1}(\mathrm{Id}+A)} (a\wedge b) =
t^{\rho}_{H^{n+1}} (a\wedge b) + t^{\rho}_{H^{n+1}} (a\wedge
{A}(b)) + t^{\rho}_{H^{n+1}} ({A}(a)\wedge b)\\
& + t^{\rho}_{H^{n+1}}
({A}(a)\wedge {A}(b))\\
&+ B \left( t^{\rho}_{H^{n+1}} (a\wedge b) + t^{\rho}_{H^{n+1}}
(a\wedge {A}(b)) + t^{\rho}_{H^{n+1}} ({A}(a)\wedge b) +
t^{\rho}_{H^{n+1}} ({A}(a)\wedge {A}(b))\right) .
\end{align*}
Suppose now that $a\in \gm^{-1}$ and $b\in \gm^{i}$ ($i<0$). The
above equality becomes
\begin{align}
\nonumber&t^{\rho}_{H^{n+1}(\mathrm{Id}+A)} (a\wedge b) =
t^{\rho}_{H^{n+1}} (a\wedge b) + t^{\rho}_{H^{n+1}} (a\wedge
{A}_{n+1}(b)) + t^{\rho}_{H^{n+1}} ({A}_{n+1}(a)\wedge b)\\
\nonumber& + t^{\rho}_{H^{n+1}}
({A}_{n+1}(a)\wedge {A}_{n+1}(b))\\
\nonumber&+ B \left( t^{\rho}_{H^{n+1}} (a\wedge b) +
t^{\rho}_{H^{n+1}} (a\wedge {A}_{n+1}(b)) + t^{\rho}_{H^{n+1}}
({A}_{n+1}(a)\wedge b)\right)\\
 \label{id-A}&+B \left(t^{\rho}_{H^{n+1}} ({A}_{n+1}(a)\wedge {A}_{n+1}(b))\right)
.
\end{align}
Since $a\in \gm^{-1}$ and $A_{n+1}(a) \in \gg^{n}$, all arguments
of $t^{\rho}_{H^{n+1}}$, in the right hand side of (\ref{id-A}),
belong to $(\gm^{-1}+\gg^{n})\wedge \gm_{n}$. From Theorem
\ref{main-t},
$$
t^{\rho}_{H^{n+1}} (a\wedge b) \in (\gm_{n})_{i-1},\
t^{\rho}_{H^{n+1}} (a\wedge {A}_{n+1}(b))\in (\gm_{n})_{i+n},\
t^{\rho}_{H^{n+1}} ({A}_{n+1}(a)\wedge b)\in (\gm_{n})_{i+n}.
$$
Also, since $A_{n+1}(a) \in \gg^{n}$ and $A_{n+1}(b) \in
(\gm_{n})_{i+n+1}$,
$$
t^{\rho}_{H^{n+1}} (A_{n+1}(a)\wedge A_{n+1}( b)) \in
(\gm_{n})_{\mathrm{min}\{ n, 2n+i+1\} }.
$$
We project (\ref{id-A}) on $(\gm_{n})^{i+n}$.
The term $t^{\rho}_{H^{n+1}}
(A_{n+1}(a)\wedge A_{n+1}( b)) $ brings no contribution
(because  $i+n <\mathrm{min}\{ n, 2n+i+1\}$).
We obtain
\begin{align}
\nonumber (t^{\rho}_{H^{n+1}(\mathrm{Id}+A)} )^{n+1}(a\wedge b)& =
(t^{\rho}_{H^{n+1}})^{n+1} (a\wedge b) +
(t^{\rho}_{H^{n+1}})^{0} (a\wedge {A}_{n+1}(b))\\
\label{explicatie-ad}& + (t^{\rho}_{H^{n+1}})^{0}
({A}_{n+1}^{n+1}(a)\wedge b)+ \pi_{(\gm_{n})^{i+n}} B t^{\rho}_{H^{n+1}} (a\wedge
b).
\end{align}
From
$t^{\rho}_{H^{n+1}}(a\wedge b)\in (\gm_{n})_{-1+i}$, $B
(\sum_{j=0}^{n-1}\gg^{j}) \subset \gg^{n}$, and $B\vert_{\gm} =
B_{n+1}\vert_{\gm}$, we obtain
\begin{equation}\label{nu-stiu}
\pi_{(\gm_{n})^{i+n}} B  t^{\rho}_{H^{n+1}} (a\wedge b)  = B_{n+1}^{n+1}  (t^{\rho}_{H^{n+1}})^{0} (a\wedge b).
\end{equation}
Using $B^{n+1}_{n+1} = - A^{n+1}_{n+1}$, relations (\ref{comp-0}),
(\ref{explicatie-ad}) and (\ref{nu-stiu}),  we obtain, for any
$a\wedge b\in \gm^{-1}\wedge \gm$,
\begin{equation}\label{f2-1}
(t^{\rho}_{H^{n+1}(\mathrm{Id}+A)})^{n+1}(a\wedge b) =
(t^{\rho}_{H^{n+1}} )^{n+1}(a\wedge b) + (\partial
A^{n+1})(a\wedge b).
\end{equation}
In  a similar way, we prove that, for any $a\wedge b\in
\gm^{-1}\wedge \gg^{i}$,
\begin{equation}\label{f2-2}
(t^{\rho}_{H^{n+1}(\mathrm{Id}+A)})^{n-i} (a\wedge b) =
(t^{\rho}_{H^{n+1}})^{n-i}(a\wedge b) -  [ a, A^{n-i}(b)].
\end{equation}
Relations (\ref{f2-1}) and (\ref{f2-2}) imply our claim.
\end{proof}

\section{Definition of
$\bar{\pi}^{(n+1)}:\bar{P}^{(n+1)}\ra
\bar{P}^{(n)}$}\label{prol-sect}

Consider the map $\partial^{(n+1)}$ from Theorem
\ref{modificare-tors}  and let $W^{n+1}$ be a
complement of $\mathrm{Im}({\partial}^{(n+1)})$ in
$\mathrm{Tor}^{n+1} (\gm_{n})$.

\begin{prop}\label{tilde-n}  i) The natural projection
$\tilde{\pi}^{n+1}:\tilde{P}^{n+1}=(\bar{t}^{(n+1)})^{-1}(W^{n+1})
\subset P^{n+1} \rightarrow \bar{P}^{(n)}$ is a $G$-structure,
with structure group $G= G^{n+1} GL_{n+2} (\gm_{n})$.\

ii) Let $\tilde{\rho}$ be a connection on $\tilde{\pi}^{n+1}$.
For any $H^{n+1} \in \tilde{P}^{n+1}$ and $a\wedge b\in \gm^{-1}\wedge \gg^{i}$
($0\leq i\leq n$), $t^{\tilde{\rho}}_{H^{n+1}} (a\wedge b) \in (\gm_{n})_{i-1}$ and
$$
(t^{\tilde{\rho}}_{H^{n+1}})^{0}(a\wedge b)= -  [a,b],\quad
H^{n+1}\in \tilde{P}^{n+1},\ a\wedge b \in \gm^{-1}\wedge
(\sum_{i=0}^{n}\gg^{i}).
$$
\end{prop}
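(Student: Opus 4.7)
My plan for part (i) is to exhibit $\tilde{\pi}^{n+1}$ as a reduction of the $\bar{G}$-structure $\pi^{n+1}$ cut out by the condition $\bar{t}^{(n+1)}\in W^{n+1}$, and then to read off its structure group from the affine-equivariance formula of Theorem \ref{modificare-tors}. Since $\bar{t}^{(n+1)}_{H^{n+1}(\mathrm{Id}+A)}=\bar{t}^{(n+1)}_{H^{n+1}}+\partial^{(n+1)}A$ for every $\mathrm{Id}+A\in \bar{G}$, the $\bar{G}$-action translates the values of $\bar{t}^{(n+1)}$ by elements of $\mathrm{Im}(\partial^{(n+1)})$. Because $W^{n+1}$ is a complement of $\mathrm{Im}(\partial^{(n+1)})$, every $\bar{G}$-fiber of $\pi^{n+1}$ meets $\tilde{P}^{n+1}$ (choose $A$ cancelling the $\mathrm{Im}(\partial^{(n+1)})$-component of $\bar{t}^{(n+1)}_{H^{n+1}}$), and two points in such a fiber both lie in $\tilde{P}^{n+1}$ exactly when they differ by an element of $K:=\mathrm{Id}+\ker \partial^{(n+1)}$ (restricted to the Lie algebra of $\bar{G}$). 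Hence $\tilde{\pi}^{n+1}$ is principal with structure group $K$.

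The next step is to identify $K$ with $G^{n+1}GL_{n+2}(\gm_n)$ by a kernel calculation. Decompose $A\in \mathrm{Lie}(\bar{G})$ as $A=A_{n+1}+\sum_{i=0}^{n-1}A^{n-i}$ with $A_{n+1}\in \mathfrak{gl}_{n+1}(\gm_n)$ and $A^{n-i}\in \mathrm{Hom}(\gg^{i},\gg^{n})$. Theorem \ref{modificare-tors} shows that $\partial^{(n+1)}A_{n+1}$ only involves the degree-$(n+1)$ component of $A_{n+1}$, so $\mathfrak{gl}_{n+2}(\gm_n)\subset \ker \partial^{(n+1)}$, and on $\mathfrak{gl}^{n+1}(\gm_n)$ the kernel is exactly $\gg^{n+1}$ by the inductive Tanaka definition (\ref{g-l}). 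For $A^{n-i}\in \mathrm{Hom}(\gg^i,\gg^n)$, the condition $\partial^{(n+1)}A^{n-i}=0$ reads $[a,A^{n-i}(b)]=0$ for every $a\in \gm^{-1}$, $b\in \gg^i$; since elements of $\gg^n\subset\mathfrak{gl}^n(\gm_{n-1})$ act on $\gm$ by derivations and automatically annihilate $\sum_{j<n}\gg^j$, vanishing on $\gm^{-1}$ combined with the fundamentality of $\gm$ propagates to vanishing on all of $\gm_{n-1}$, forcing $A^{n-i}(b)=0$. Thus $K=G^{n+1}GL_{n+2}(\gm_n)$.

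For part (ii), I extend $\tilde{\rho}$ to a connection $\rho$ on $\pi^{n+1}$ whose horizontal distribution along $\tilde{P}^{n+1}$ is tangent to $\tilde{P}^{n+1}$ and agrees there with that of $\tilde{\rho}$; such an extension exists because $\tilde{\pi}^{n+1}$ is a subbundle of $\pi^{n+1}$. The $\tilde{\rho}$-twisted vector fields on $\tilde{P}^{n+1}$ are then restrictions of the $\rho$-twisted ones, so $t^{\tilde{\rho}}=t^{\rho}|_{\tilde{P}^{n+1}}$ and the claims reduce to statements about $t^{\rho}$. For $0\leq i\leq n-1$ they are immediate consequences of Proposition \ref{part2}. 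For $i=n$, Proposition \ref{part3} applied to $(c,d)=(b,a)\in \gg^n\times \gm^{-1}$ (noting $\min\{n+(-1),n\}=n-1$) gives $t^{\rho}(b\wedge a)\in (\gm_n)_{n-1}$ and $(t^{\rho})^0(b\wedge a)=-[b,a]$; the required statement for $a\wedge b$ follows by skew-symmetry.

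The main obstacle is the kernel computation in the second paragraph, in particular the vanishing of $\partial^{(n+1)}$ on the $\mathrm{Hom}(\gg^i,\gg^n)$-summands: this is the only place where conceptual input beyond Theorem \ref{modificare-tors} is required, and it is precisely where the fundamentality of $\gm$ and the Tanaka derivation property built into the definition of $\gg^n$ become indispensable.
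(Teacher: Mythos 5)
Your proof is correct and follows essentially the same route as the paper: part (i) rests on Theorem \ref{modificare-tors} together with the kernel computation $\ker \partial^{(n+1)}=\gg^{n+1}+\mathfrak{gl}_{n+2}(\gm_{n})$ (using the fundamentality of $\gm$ and the derivation property built into $\gg^{n}$), and part (ii) extends $\tilde{\rho}$ to a connection on $\pi^{n+1}$ and invokes Theorem \ref{main-t} (Propositions \ref{part2} and \ref{part3}). The only difference is cosmetic: you spell out the principal-bundle verification (every $\bar{G}$-fiber meets $\tilde{P}^{n+1}$, with stabilizer $\mathrm{Id}+\ker\partial^{(n+1)}$) that the paper leaves implicit in ``Claim i) follows.''
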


\begin{proof} Any $A^{n-i}\in
\mathrm{Hom}(\gg^{i}, \gg^{n})$
$(0\leq i\leq n-1$)
with $\partial^{(n+1)}
(A^{n-i})=0$, i.e.
$$
[a, A^{n-i}(b)]= -A^{n-i}(b)(a)=0,\quad \forall a\in \gm^{-1},\
b\in \gg^{i},
$$
vanishes identically (because $A^{n-i}(b) \in \gg^{n}\subset
\mathrm{Hom} (\gm , \gm_{n-1})$ satisfies $A^{n-i}(b)[x,y]  = [
A^{n-i}(b)(x), y] + [ x, A^{n-i}(b)(y)]$, for any $x,y\in \gm$, and
$\gm^{-1}$ generates $\gm$; so, 
if  $A^{n-i} (b)\vert_{\gm^{-1}} =0$, for any $b\in \gg^{n}$,  
then $A^{n-i} (b) =0$ and $A^{n-i}=0$). We proved that
${\partial}^{(n+1)}\vert_{\sum_{i=0}^{n-1}\mathrm{Hom}(\gg^{i},
\gg^{n})}$ is injective. Similarly, any $A_{n+1} \in
\mathfrak{gl}_{n+1}(\gm_{n})$ which satisfies $\partial^{(n+1)}
(A_{n+1})=0$, i.e.
$$
A^{n+1}_{n+1}( [ a, b] )=[ A_{n+1}^{n+1}(a), b] + [ a, A_{n+1}^{n+1}(b)],\quad
\forall a\in \gm^{-1},\ b\in \gm ,
$$
satisfies this relation for any $a, b\in \gm .$ It follows that
$\mathrm{Ker} \left( \partial^{(n+1)}\vert_{\mathfrak{gl}_{n+1}
(\gm_{n})}\right) = \gg^{n+1}+\mathfrak{gl}_{n+2} (\gm_{n}).$
Claim i) follows. Claim ii) follows from Theorem \ref{main-t} ii) (extend
$\tilde{\rho}$ to a connection on $\pi^{n+1}$).

\end{proof}

We can finally define the map
$\bar{\pi}^{(n+1)} : \bar{P}^{(n+1)} \ra \bar{P}^{(n)}$ we are looking for. Namely,
let $\bar{P}^{(n+1)}: = \tilde{P}^{n+1} / GL_{n+2} (\gm_{n})$ and
$\bar{\pi}^{(n+1)} : \bar{P}^{(n+1)} \ra \bar{P}^{(n)}$ the map induced by
$\tilde{\pi}^{n+1}.$

\begin{prop}\label{prol-n-1} The map $\bar{\pi}^{(n+1)} : \bar{P}^{(n+1)} \ra
\bar{P}^{(n)}$ satisfies properties A), B), and C) from Theorem
\ref{sf0} (with $n$ replaced by $n+1$).
\end{prop}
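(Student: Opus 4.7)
The plan is to assemble the three properties from results already established, closely mirroring the proof of Proposition \ref{caract-quasi} which handled the first inductive step. Property A) requires nothing new: the base $\bar{P}^{(n)}$ already carries a canonical Tanaka $\{e\}$-structure of type $\gm_n$ by Lemma \ref{first-pin}. In particular, the flag $\{\bar{\mathcal{D}}^{(n)}_i\}$, the bracket conditions on negative degrees, and the graded frames $I_{\bar{H}^n}$ whose restriction to $\gm$ is a Lie algebra isomorphism onto $\mathrm{gr}^{<0}(T_{\bar{H}^n}\bar{P}^{(n)})$ are all part of that construction.

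For property B), I would argue as follows. By construction, $\bar{\pi}^{(n+1)}$ is defined as the quotient of $\tilde{\pi}^{n+1}$ by the normal subgroup $GL_{n+2}(\gm_n) \subset G^{n+1}GL_{n+2}(\gm_n)$, and Proposition \ref{tilde-n} i) establishes that $\tilde{\pi}^{n+1}$ is a $G$-structure with structure group $G^{n+1}GL_{n+2}(\gm_n)$. Moreover $\tilde{P}^{n+1}\subset P^{n+1}$, and by Definition \ref{pi-n-t} each element of $P^{n+1}$ is identified with a frame arising from an adapted gradation of $T_{\bar{H}^n}\bar{P}^{(n)}$ (lifting the canonical graded frame $I_{\bar{H}^n}$), so $\tilde{\pi}^{n+1}$ is a subbundle of the bundle of adapted gradations of $T\bar{P}^{(n)}$. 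To deduce the quasi-gradation statement, I would apply Theorem \ref{lifts-adaugat} ii) fiberwise with $V = T_{\bar{H}^n}\bar{P}^{(n)}$ and $m = n+2$: since the orbits of $GL_{n+2}(\gm_n)$ on $\mathrm{Gr}(T_{\bar{H}^n}\bar{P}^{(n)})$ are precisely the fibers of $\Pi^{n+2}$, the quotient $\bar{P}^{(n+1)} = \tilde{P}^{n+1}/GL_{n+2}(\gm_n)$ injects canonically into $\mathrm{Gr}_{n+2}(T\bar{P}^{(n)})$. For property C), the required properties of the torsion function $t^{\tilde{\rho}}$ of any connection on $\tilde{\pi}^{n+1}$ are exactly the content of Proposition \ref{tilde-n} ii).

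There is essentially no obstacle remaining at this stage: all the substantive work has been done earlier. The construction of $\pi^{n+1}$ and the identification of its structure group (Section \ref{pi-n}), the detailed analysis of its torsion function (Theorem \ref{main-t}, divided into the three parts in Sections \ref{s1}--\ref{s3}), the definition of the $(n+1)$-torsion and the variation formula (Theorem \ref{modificare-tors}), and finally the reduction to $\tilde{\pi}^{n+1}$ with the correct structure group (Proposition \ref{tilde-n}), together make the proposition a matter of citation. The only minor point to double-check is that the $GL_{n+2}(\gm_n)$-action on $\tilde{P}^{n+1}$ used to form the quotient matches the $GL_{n+2}(\gm_n)$-action on adapted gradations that appears in Theorem \ref{lifts-adaugat}; this compatibility follows from the construction of the $G^n GL_{n+1}(\gm_{n-1})$-action in Proposition \ref{cheie} (applied with $n$ replaced by $n+1$, once the inductive step is completed) and presents no real difficulty.
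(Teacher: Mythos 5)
Your proposal is correct and matches the paper's own proof, which likewise cites Lemma \ref{first-pin} for A), the construction plus Proposition \ref{tilde-n} for B) and C), and Theorem \ref{lifts-adaugat} for the identification with a subbundle of $\mathrm{Gr}_{n+2}(T\bar{P}^{(n)})$. The extra care you take about the compatibility of the $GL_{n+2}(\gm_{n})$-actions is a reasonable addition but not something the paper spells out.
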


\begin{proof}
From Lemma \ref{first-pin},  property A) is satisfied.
Property B) is satisfied by construction
and property C) follows from Proposition \ref{tilde-n}. From Theorem
\ref{lifts-adaugat}, $\bar{\pi}^{(n+1)}$ is
canonically  isomorphic to a
subbundle of the bundle $\mathrm{Gr}_{n+2} (T\bar{P}^{(n)})$ of
$(n+2)$-quasi-gradations of $T\bar{P}^{(n)}$.
\end{proof}

\section{Proof of Theorem \ref{sf}}\label{prelungire-tan}

In this section we prove Theorem \ref{sf}. In Subsection \ref{can-frame} we construct the canonical frame required by Theorem \ref{sf}.
In Subsection \ref{isom-sect} we prove the statements about the automorphism groups.

\subsection{The canonical frame of $\bar{P}^{(\bar{l})}.$}\label{can-frame}

\begin{prop}\label{ad-prop} Let $(\mathcal D_{i}, \pi_{G})$ be a Tanaka $G$-structure of
type $\gm = \sum_{i=-k}^{-1} \gm^{i}$ and
finite order $\bar{l}$. Then
the Tanaka prolongation $\bar{P}^{(\bar{l})}$ has a canonical frame $F^{\mathrm{can}}.$
\end{prop}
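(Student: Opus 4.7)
The plan is to exploit the fact that finite order forces both factors of the structure group of $\tilde{\pi}^{n+1}$ to become trivial once $n$ is large enough, so that $\tilde{\pi}^{n+1}$ itself provides an absolute parallelism which then descends canonically to $\bar{P}^{(\bar{l})}$. First, I would observe that since $\gg^{n} = 0$ for all $n \geq \bar{l}+1$, we have $G^{n} = \{\mathrm{Id}\}$ and $\gm_{n} = \gm_{\bar{l}}$ for all $n \geq \bar{l}$. In particular, each principal $G^{n}$-bundle $\bar{\pi}^{(n)}: \bar{P}^{(n)} \to \bar{P}^{(n-1)}$ with $n > \bar{l}$ is a diffeomorphism, providing canonical identifications $\bar{P}^{(n)} \cong \bar{P}^{(\bar{l})}$ for every $n \geq \bar{l}$. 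Consequently, it suffices to construct a canonical frame on $\bar{P}^{(n)}$ for some conveniently large $n$.

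Next, I would continue the inductive procedure of Theorem \ref{sf0} (through Propositions \ref{tilde-n} and \ref{prol-n-1}) to obtain, for each $n \geq \bar{l}$, the $G$-structure $\tilde{\pi}^{n+1}: \tilde{P}^{n+1} \to \bar{P}^{(n)}$ with structure group $G^{n+1}GL_{n+2}(\gm_{n}) = GL_{n+2}(\gm_{\bar{l}})$. A short computation controls when this group becomes trivial: any $A \in \mathfrak{gl}_{m}(\gm_{\bar{l}})$ raises degrees by at least $m$, so $A((\gm_{\bar{l}})^{i}) \subset (\gm_{\bar{l}})_{i+m}$ is forced to vanish whenever $i + m > \bar{l}$; since the minimal degree in $\gm_{\bar{l}}$ is $-k$, the whole space $\mathfrak{gl}_{m}(\gm_{\bar{l}})$ is zero as soon as $m \geq \bar{l}+k+1$. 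Taking $n \geq \bar{l}+k-1$, the structure group of $\tilde{\pi}^{n+1}$ is thus trivial, so $\tilde{P}^{n+1} \to \bar{P}^{(n)}$ is an absolute parallelism, i.e.\ a canonical section of the frame bundle of $\bar{P}^{(n)}$. Transporting this section via the canonical diffeomorphism $\bar{P}^{(n)} \cong \bar{P}^{(\bar{l})}$ yields the desired canonical frame $F^{\mathrm{can}}$ on $\bar{P}^{(\bar{l})}$.

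The main obstacle is verifying genuine canonicity, since the definition of $\tilde{P}^{n+1}$ at each stage involves an auxiliary choice of complement $W^{n+1}$ to $\mathrm{Im}(\partial^{(n+1)})$ inside $\mathrm{Tor}^{n+1}(\gm_{n})$. Here I would invoke Theorem \ref{modificare-tors}, which shows that changing $W^{n+1}$ corresponds precisely to acting by elements of $GL_{n+2}(\gm_{n})$ (the kernel piece of $\partial^{(n+1)}$ beyond $\gg^{n+1}$); hence the quotient $\bar{P}^{(n+1)} = \tilde{P}^{n+1}/GL_{n+2}(\gm_{n})$ is independent of the choice. Once $n$ is large enough that $GL_{n+2}(\gm_{\bar{l}})$ is trivial, this quotient is vacuous and $\tilde{P}^{n+1}$ itself is intrinsically determined; therefore the resulting absolute parallelism on $\bar{P}^{(n)}$ and the corresponding frame $F^{\mathrm{can}}$ on $\bar{P}^{(\bar{l})}$ depend only on the initial Tanaka structure $(\mathcal D_{i},\pi_{G})$, as required.
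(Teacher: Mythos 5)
Your route is genuinely different from the paper's proof of this proposition. The paper observes that for $s\geq \bar l+1$ the filtration of $T\bar P^{(s)}$ stops at degree $\bar l$, so $(s+1)$-quasi-gradations are honest gradations once $s\geq k+\bar l$; hence the single point of $\bar P^{(k+\bar l+1)}$ lying over $\bar H^{\bar l}$ is an honest adapted gradation of $T_{\bar H^{k+\bar l}}\bar P^{(k+\bar l)}$, i.e.\ a frame, which is then pushed down by $(\bar\pi^{(k+\bar l,\bar l+1)})_*$. You instead let the structure group $G^{n+1}GL_{n+2}(\gm_n)$ of $\tilde\pi^{n+1}$ die out and read off the frame from the resulting $\{e\}$-structure. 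This is essentially the mechanism of the paper's Proposition \ref{ad-prop-2} (where $F^{\mathrm{can}}$ is shown to coincide with the absolute parallelism $\pi^{\bar l'+1}$), and your degree count $\mathfrak{gl}_m(\gm_{\bar l})=0$ for $m\geq \bar l+k+1$ is correct.

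There is, however, a genuine flaw in your canonicity argument at the threshold $n=\bar l+k-1$ you chose. Triviality of $GL_{n+2}(\gm_{\bar l})$ makes the quotient $\tilde P^{n+1}\to\bar P^{(n+1)}$ the identity, but it does \emph{not} make $\tilde P^{n+1}=(\bar t^{(n+1)})^{-1}(W^{n+1})$ independent of $W^{n+1}$: at this level the fiber of $\pi^{n+1}$ over a point of $\bar P^{(n)}$ is still an affine space modelled on $\mathfrak{gl}_{n+1}(\gm_{\bar l})=\mathfrak{gl}_{\bar l+k}(\gm_{\bar l})=\mathrm{Hom}(\gm^{-k},\gg^{\bar l})$, which is nonzero in general, and since $\partial^{(n+1)}$ is injective on it, each choice of complement $W^{n+1}$ selects a \emph{different} point of that fiber. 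Moreover, Theorem \ref{modificare-tors} only describes how $\bar t^{(n+1)}$ varies along a fiber of $\pi^{n+1}$; it does not say that changing $W^{n+1}$ amounts to acting by $GL_{n+2}(\gm_n)$. The repair is to go one step further, $n\geq \bar l+k$: then $\mathfrak{gl}_{n+1}(\gm_{\bar l})=0$ as well, the full structure group $\bar G$ of $\pi^{n+1}$ is trivial, $\partial^{(n+1)}=0$, the only complement is all of $\mathrm{Tor}^{n+1}(\gm_n)$, and $\tilde P^{n+1}=P^{n+1}$ with no choice involved --- exactly the condition ``$\pi^{\bar l'}$ is an $\{e\}$-structure'' imposed in Proposition \ref{ad-prop-2}. (Strictly speaking, both your frame and the paper's still depend on the complements chosen in building the tower itself; ``canonical'' here means no choices beyond those.)
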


\begin{proof} Since
$\gg^{\bar{l}+1}=0$, also $\gg^{s} =0$ for any $s\geq \bar{l}+1$
and $\bar{\pi}^{(s)}:\bar{P}^{(s)} \ra \bar{P}^{(s-1)}$ is a
diffeomorphism. Moreover, for such an
$s$, $\bar{\mathcal D}^{(s)}_{\bar{l}+1}=0$ (at any $\bar{H}^{s}
\in \bar{P}^{(s)}$, $(\bar{\mathcal
D}^{(s)}_{\bar{l}+1})_{\bar{H}^{s}}$ is isomorphic to
$(\gm_{s})_{\bar{l}+1} = \gg^{\bar{l}+1}+\cdots + \gg^{s}$, which
is  trivial). We obtain  that $\mathrm{Gr}_{m} (T\bar{P}^{(s)}) =
\mathrm{Gr} (T\bar{P}^{(s)})$ for any $s\geq \bar{l}+ 1$ and
$m\geq k +\bar{l} + 1$ (see our comments
after Definition \ref{def-quasi}).
For any $f, t$ with $f\geq t+1$ we denote by $\bar{\pi}^{(f,t+1)}:
\bar{P}^{(f)} \ra \bar{P}^{(t)}$ the composition $\bar{\pi}^{(t+1)} \circ \cdots \circ
\bar{\pi}^{(f)}$.

We need
to construct  a canonical isomorphism
$F^{\mathrm{can}}_{\bar{H}^{\bar{l}}} : \gm_{\bar{l}}\ra T_{\bar{H}^{\bar{l}}}\bar{P}^{(\bar{l})}$,
for any $\bar{H}^{\bar{l}}\in \bar{P}^{(\bar{l})}$.
Let $\bar{H}^{k+\bar{l} + 1}:= ( \bar{\pi}^{(k+\bar{l}+1, \bar{l}+1)})^{-1} ( \bar{H}^{\bar{l}})
\in \bar{P}^{(k +\bar{l}+1)}.$
By our construction of Tanaka prolongations,  $\bar{P}^{(k+\bar{l}+1)} \subset
\mathrm{Gr}_{k+\bar{l} +2} (T\bar{P}^{(k+\bar{l})})$. From the
above, $\mathrm{Gr}_{k+\bar{l} +2} (T\bar{P}^{(k+\bar{l})}) =
\mathrm{Gr}(T\bar{P}^{(k+\bar{l})})$ and we obtain that
$\bar{P}^{(k+\bar{l}+1)} \subset \mathrm{Gr}
(T\bar{P}^{(k+\bar{l})})$. In particular,
$\bar{H}^{k+\bar{l}+1}$ defines a
gradation of $T_{\bar{H}^{k+\bar{l}}}\bar{P}^{(k+\bar{l})}$
or a
frame $F_{\bar{H}^{k+\bar{l}+1}}= \widehat{\bar{H}^{k+ \bar{l}+1}}\circ I_{\bar{H}^{k+\bar{l}}}
:\gm_{k+\bar{l}}= \gm_{\bar{l}}\ra
T_{\bar{H}^{k+\bar{l}}}\bar{P}^{(k+\bar{l})}$,
where $\bar{H}^{k+\bar{l}} := \bar{\pi}^{(k+ \bar{l}+1)} ( \bar{H}^{k+\bar{l} +1})$ and
 $I_{\bar{H}^{k+\bar{l}}}:
\gm_{\bar{l}}\ra
\mathrm{gr}(T_{\bar{H}^{k+\bar{l}}}\bar{P}^{(k+\bar{l})})$
is the graded frame  from
the Tanaka $\{ e\}$-structure of $\bar{P}^{(k+\bar{l})}.$
We define
$F^{\mathrm{can}}_{\bar{H}^{\bar{l}}}:= (\bar{\pi}^{(k+\bar{l}, \bar{l}+1)})_{*}\circ  F_{\bar{H}^{k+\bar{l} +1}}
:\gm_{\bar{l}} \ra T_{\bar{H}^{\bar{l}}} \bar{P}^{(\bar{l})}$.
\end{proof}

\subsection{The automorphism group $\mathrm{Aut} ({\mathcal D}_{i}, \pi_{G})$}\label{isom-sect}

The proof of the remaining part of Theorem \ref{sf} is based on the behaviour of the automorphisms of a Tanaka structure,
under the prolongation procedure:

\begin{prop}\label{ad-prop-1} Let $({\mathcal D}_{i}, \pi_{G} : P=P_{G} \ra M)$ be a Tanaka  $G$-structure of type
$\gm .$ The group of automorphisms $\mathrm{Aut} ({\mathcal D}_{i}, \pi_{G})$ of  $({\mathcal D}_{i}, \pi_{G})$
is isomorphic to the group of automorphisms
of the Tanaka $\{e\}$-structure on $P= P_{G}$ (see Proposition
\ref{hat}) and to  the group of automorphisms $\mathrm{Aut}(\tilde{\pi}^{n})$
of the $G$-structures $\tilde{\pi}^{n} : \tilde{P}^{n} \ra \bar{P}^{(n-1)}$, $n\geq 1.$
\end{prop}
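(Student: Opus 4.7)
The plan is to establish both isomorphisms by a canonical lifting-and-descent procedure through the successive stages of the Tanaka prolongation, with the overall argument being an induction on $n$. The base step handles $\mathrm{Aut}(\mathcal{D}_i, \pi_G)\simeq \mathrm{Aut}(P_G,\hat{\pi})$: given $f\in \mathrm{Aut}(\mathcal{D}_i, \pi_G)$ one defines $\tilde{f}^{(0)}:P_G\to P_G$ by $\tilde{f}^{(0)}(u):=f_{*}\circ u$, and from the Tanaka-automorphism property of $f$ together with the explicit form $\hat{u}=(\pi_G)_{*}^{-1}\circ u+\nu^0_u$ of Proposition \ref{hat}, one verifies directly that $\tilde{f}^{(0)}$ is $G$-equivariant over $f$ and satisfies $(\tilde{f}^{(0)})_{*}\circ \hat{u}=\widehat{\tilde{f}^{(0)}(u)}$, i.e.\ it preserves the canonical $\{e\}$-structure. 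Conversely, any diffeomorphism $\psi$ of $P_G$ preserving $\hat{\cdot}$ must preserve the filtration (\ref{TPG}) and, by the transformation law (\ref{hat-u}), be $G$-equivariant, hence descend to an element of $\mathrm{Aut}(\mathcal{D}_i, \pi_G)$.

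For the inductive step I would show that an automorphism $\phi^{(n-1)}$ of the $\{e\}$-structure on $\bar{P}^{(n-1)}$ lifts canonically to an automorphism of $\tilde{\pi}^n$, and that every such automorphism descends to one of the $\{e\}$-structure on $\bar{P}^{(n)}=\tilde{P}^n/GL_{n+1}(\gm_{n-1})$. The lift is defined on $P^n$ by $H\mapsto \phi^{(n-1)}_{*}(H)$: since $\phi^{(n-1)}_{*}$ preserves the filtration of $T\bar{P}^{(n-1)}$ it sends adapted gradations to adapted gradations, and the relation $\phi^{(n-1)}_{*}\circ I_{\bar{H}^{n-1}}=I_{\phi^{(n-1)}(\bar{H}^{n-1})}$ guarantees both structure-group equivariance and the compatibility condition (\ref{cat}) defining $P^n$. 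To see that the reduction $\tilde{P}^n=(\bar{t}^{(n)})^{-1}(W^n)$ is preserved, one observes that for any connection $\rho$ on $\pi^n$ the pushforward of a $\rho$-twisted vector field under $\phi^{(n-1)}_{*}$ is a twisted vector field for the pulled-back connection; since $\bar{t}^{(n)}$ is connection-independent by Proposition \ref{deg-2}, this yields $\bar{t}^{(n)}_{\phi^{(n-1)}_{*}(H)}=\bar{t}^{(n)}_{H}$, and hence $\phi^{(n-1)}_{*}$ stabilises the level set $(\bar{t}^{(n)})^{-1}(W^n)$. Conversely, an automorphism of the $G$-structure $\tilde{\pi}^n$ is by definition a diffeomorphism of $\bar{P}^{(n-1)}$ admitting a lift to $\tilde{P}^n$, and the combination of $\tilde{P}^n\subset \mathrm{Gr}(T\bar{P}^{(n-1)})$ with the lifting property forces preservation of both the filtration and the canonical graded frames, hence of the $\{e\}$-structure.

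The principal obstacle I anticipate is closing the induction by passing from an automorphism of $\tilde{\pi}^n$ to an automorphism of the $\{e\}$-structure on $\bar{P}^{(n)}$: one must verify that the induced map $\phi^{(n)}$ on $\bar{P}^{(n)}$ satisfies $\phi^{(n)}_{*}\circ I_{\bar{H}^{n}}=I_{\phi^{(n)}(\bar{H}^n)}$, which amounts to compatibility with the construction (\ref{isom}) of $I_{\bar{H}^n}$ from $I_{\bar{H}^{n-1}}$ and the vertical parallelism $\nu^n$ of $\bar{\pi}^{(n)}$; this should follow from naturality of $\nu^n$ under the equivariant diffeomorphism induced by $\phi^{(n-1)}$, but requires careful bookkeeping of the quotient procedure. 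Iterating the inductive step then produces the chain $\mathrm{Aut}(\mathcal{D}_i, \pi_G)\simeq \mathrm{Aut}(\tilde{\pi}^n)$ for every $n\geq 1$, and mutual inversion of the lift and descent maps is automatic because every automorphism in the chain is determined by the single underlying diffeomorphism $f$ of $M$.
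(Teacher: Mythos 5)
Your proposal is correct and follows essentially the same strategy as the paper: the base isomorphism via $u\mapsto f_{*}\circ u$, then an inductive lift-and-descent up the prolongation tower, pushing frames forward by the differential of the automorphism one level down and descending through the quotient by $GL_{n+1}(\gm_{n-1})$. The only notable differences are presentational: you route the induction through the Tanaka $\{e\}$-structures on each $\bar{P}^{(n)}$ rather than directly between $\mathrm{Aut}(\tilde{\pi}^{n})$ and $\mathrm{Aut}(\tilde{\pi}^{n+1})$, and you explicitly justify preservation of the torsion level set $\tilde{P}^{n}=(\bar{t}^{(n)})^{-1}(W^{n})$ via connection-independence and naturality of $\bar{t}^{(n)}$ (Proposition \ref{deg-2}) --- a point the paper dispatches as an ``easy check''.
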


\begin{proof} The argument is similar to the one used in Theorem
3.2 of \cite{sternberg} (in the setting of prolongation of $G$-structures) and is based on the naturality of our construction.
One first shows that any  $f\in \mathrm{Aut} ({\mathcal D}_{i}, \pi_{G})$ induces an automorphism $f_{G}: P \ra P$ of the Tanaka $\{ e\}$-structure
of $P$, by $f_{G} (u):= f_{*}\circ u$, for any
graded frame $u: \gm \ra \mathrm{gr} (T_{p}M)$ which belongs to
$P$,
and that $f \mapsto f_{G}$ is an isomorphism  betweeen these Tanaka structure automorphism groups.
Next, one notices (from definitions) that the automorphisms of the
Tanaka $\{ e\}$-structure of $P$ coincide with
the automorphisms of the
$G$-structure $\tilde{\pi}^{1}: \tilde{P}^{1} \ra P$.

It remains to prove that $\mathrm{Aut} (\tilde{\pi}^{n})$ is
isomorphic to $\mathrm{Aut} (\tilde{\pi}^{n+1})$, for any $n\geq 1.$ Any
$\bar{f}^{(n-1)}\in \mathrm{Aut} (\tilde{\pi}^{n})$ induces a map
$f_{\tilde{P}^{n}} : \tilde{P}^{n} \ra \tilde{P}^{n}$, defined by $f_{\tilde{P}^{n}} (F_{H^{n}} )
:=  (\bar{f}^{(n-1)})_{*} \circ F_{H^{n}}$, for any $F_{H^{n}}\in  \tilde{P}^{n}$.
The map
$f_{\tilde{P}^{n}}$  commutes with the action of $G^{n} GL_{n+1} (\gm_{n-1})$
(hence, also with the action of $GL_{n+1} (\gm_{n-1})$)
 on $\tilde{P}^{n}$  and induces a map $\bar{f}^{(n)}: \bar{P}^{(n)} \ra \bar{P}^{(n)}$ which belongs to
 $ \mathrm{Aut} (\tilde{\pi}^{n+1})$ (easy check).
For the converse, let
$\bar{f}^{(n)}\in \mathrm{Aut} (\tilde{\pi}^{n+1})$, i.e. $\bar{f}^{(n)}: \bar{P}^{(n)} \ra \bar{P}^{(n)}$
is a diffeomorphism,  such that,
for any frame $F_{H^{n+1}} : \gm_{n} \ra T_{\bar{H}^{n}} \bar{P}^{(n)}$ which belongs to
$\tilde{P}^{n+1}$, $(\bar{f}^{(n)})_{*} \circ F_{H^{n+1}}: \gm_{n} \ra
T_{\bar{f}^{(n)}(\bar{H}^{n})} \bar{P}^{(n)}$ also belongs to $\tilde{P}^{n+1}.$
Since the frames from $\tilde{P}^{n+1}$ are filtration preserving,
both $F_{H^{n+1}}$ and $(\bar{f}^{(n)})_{*} \circ F_{H^{n+1}}$, therefore also
$\bar{f}^{(n)}$, are filtration preserving.
Since the frames from $\tilde{P}^{n+1}$, restricted to $\gg^{n}$, coincide with the vertical parallelism
of $\bar{\pi}^{(n)}: \bar{P}^{(n)} \ra \bar{P}^{(n-1)}$, we obtain that
$(\bar{f}^{(n)})_{*} (\xi^{v})^{\bar{P}^{(n)}} = (\xi^{v})^{\bar{P}^{(n)}}$, for any $v\in \gg^{n}.$
Therefore, there is $\bar{f}^{(n-1)}: \bar{P}^{(n-1)} \ra \bar{P}^{(n-1)}$ such that
$\bar{\pi}^{(n)} \circ \bar{f}^{(n)} = \bar{f}^{(n-1)} \circ \bar{\pi}^{(n)}.$
We check that $\bar{f}^{(n-1)}$ induces $\bar{f}^{(n)}$. For this, we use: for any
$x\in (\gm_{n-1})^{i}$, 
\begin{align}
\nonumber&\mathrm{pr}^{i}_{(n+1)} (\bar{\pi}^{(n)})_{*}  F^{i}_{H^{n+1}}(x) = F^{i}_{\bar{H}^{n}}(x),\\
\label{nat-dif}&\mathrm{pr}^{i}_{(n+1)}
(\bar{\pi}^{(n)})_{*} (\bar{f}^{(n)})_{*} F^{i}_{H^{n+1}}(x)  = F^{i}_{\bar{f}^{(n)}
(\bar{H}^{n})}(x),
\end{align}
where $\bar{H}^{n-1} = \bar{\pi}^{(n)} (\bar{H}^{n}).$
(Relations (\ref{nat-dif}) follow from   $F_{H^{n+1}}, (\bar{f}^{(n)})_{*} \circ F_{H^{n+1}}\in \tilde{P}^{n+1}$ and
Lemma \ref{ajut-p2}). Since  $\bar{\pi}^{(n)} \circ \bar{f}^{(n)} = \bar{f}^{(n-1)} \circ \bar{\pi}^{(n)}$ and
$\bar{f}^{(n)}$, $\bar{f}^{(n-1)}$ are filtration preserving, we obtain from relations (\ref{nat-dif}) that $ F_{\bar{f}^{(n)} (\bar{H}^{n})}=
(\bar{f}^{(n-1)})_{*} \circ F_{\bar{H}^{n}} $, i.e.
$\bar{f}^{(n)}$ is induced by $\bar{f}^{(n-1)}$, as required.
It is easy to see that $\bar{f}^{(n-1)}\in \mathrm{Aut}( \tilde{\pi}^{n})$.
\end{proof}

\begin{prop}\label{ad-prop-2} Let $({\mathcal D}_{i}, \pi_{G})$  be a Tanaka $G$-structure
of type $\gm = \sum_{i=-k}^{-1}\gm^{i}$ and  finite order $\bar{l}$ and $F^{\mathrm{can}}$
the canonical frame of $\bar{P}^{(\bar{l})}$. Then
$\mathrm{Aut} ({\mathcal D}_{i}, \pi_{G})$
is isomorphic to   $\mathrm{Aut}(\bar{P}^{(\bar{l})},  F^{\mathrm{can}})$. It is a Lie group
with $\mathrm{dim}\mathrm{Aut} ({\mathcal D}_{i}, \pi_{G})\leq
\mathrm{dim} (M) + \sum_{i=0}^{\bar{l}  }\mathrm{dim}( \gg^{i}).$
\end{prop}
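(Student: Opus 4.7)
The plan is to stitch together Proposition \ref{ad-prop-1}, the fact that at a sufficiently high prolongation step $\tilde{\pi}^{n}$ collapses to an absolute parallelism coinciding with the push-forward of $F^{\mathrm{can}}$, and Kobayashi's theorem quoted in the introduction.

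First I would pick $\bar{l}^{\prime} \geq \bar{l}$ large enough that both $G^{\bar{l}^{\prime}+1} = \mathrm{Id} + \gg^{\bar{l}^{\prime}+1}$ is trivial (which holds whenever $\bar{l}^{\prime} \geq \bar{l}$, by finite order) and $\ggl_{\bar{l}^{\prime}+2}(\gm_{\bar{l}^{\prime}}) = 0$ (which holds once $\bar{l}^{\prime}+2 > k+\bar{l}$, since $\gm_{\bar{l}^{\prime}} = \gm_{\bar{l}}$ has degrees in $[-k,\bar{l}]$, so every endomorphism of degree $\geq \bar{l}^{\prime}+2$ vanishes). For such $\bar{l}^{\prime}$ the structure group $G^{\bar{l}^{\prime}+1} GL_{\bar{l}^{\prime}+2}(\gm_{\bar{l}^{\prime}})$ of $\tilde{\pi}^{\bar{l}^{\prime}+1}$ is trivial, so this $G$-structure is an absolute parallelism on $\bar{P}^{(\bar{l}^{\prime})}$. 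Simultaneously, each $\bar{\pi}^{(s)}$ with $s > \bar{l}$ is a principal $\{e\}$-bundle, hence a diffeomorphism, so $\bar{\pi}^{(\bar{l}^{\prime},\bar{l}+1)}:\bar{P}^{(\bar{l}^{\prime})} \to \bar{P}^{(\bar{l})}$ is a diffeomorphism. Unwinding the definition of $F^{\mathrm{can}}$ in Proposition \ref{ad-prop}, this absolute parallelism pushes forward under $\bar{\pi}^{(\bar{l}^{\prime},\bar{l}+1)}$ to exactly $F^{\mathrm{can}}$.

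With this in place, the isomorphism of automorphism groups is a short chain. By Proposition \ref{ad-prop-1}, $\mathrm{Aut}(\mathcal{D}_i,\pi_G) \simeq \mathrm{Aut}(\tilde{\pi}^{\bar{l}^{\prime}+1})$. Since the structure group of $\tilde{\pi}^{\bar{l}^{\prime}+1}$ is trivial, this automorphism group coincides tautologically with the automorphism group of the underlying absolute parallelism on $\bar{P}^{(\bar{l}^{\prime})}$. Finally, the diffeomorphism $\bar{\pi}^{(\bar{l}^{\prime},\bar{l}+1)}$ intertwines the two absolute parallelisms by construction, so this group is canonically isomorphic to $\mathrm{Aut}(\bar{P}^{(\bar{l})},F^{\mathrm{can}})$.

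For the Lie group statement and the dimension bound I would invoke Kobayashi's theorem recalled in the introduction: the automorphism group of an absolute parallelism on a manifold $N$ is a Lie group of dimension at most $\dim N$. A straightforward induction along the tower of principal $G^n$-bundles $\bar{\pi}^{(n)}:\bar{P}^{(n)} \to \bar{P}^{(n-1)}$ starting from $\bar{P}^{(0)} = P_G$, and using $\dim G^n = \dim \gg^n$, yields $\dim \bar{P}^{(\bar{l})} = \dim M + \sum_{i=0}^{\bar{l}} \dim \gg^i$, whence the announced bound. The main point I expect to have to verify carefully is the identification, in the first paragraph, of the push-forward of the trivialized soldering form of $\tilde{\pi}^{\bar{l}^{\prime}+1}$ with the canonical frame produced by Proposition \ref{ad-prop}; once this compatibility is checked, the rest of the argument is essentially formal.
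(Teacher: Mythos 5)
Your proposal is correct and follows essentially the same route as the paper: choose $\bar{l}^{\prime}$ large enough that the structure group $G^{\bar{l}^{\prime}+1}GL_{\bar{l}^{\prime}+2}(\gm_{\bar{l}^{\prime}})$ is trivial so that $\tilde{\pi}^{\bar{l}^{\prime}+1}$ is an absolute parallelism, identify it with the canonical frame, and conclude via Proposition \ref{ad-prop-1} and Kobayashi's theorem. The one step you defer -- matching the absolute parallelism with $F^{\mathrm{can}}$ -- is exactly what the paper's proof verifies explicitly through the relations $(\bar{\pi}^{(s)})_{*}\circ F_{(\pi^{s+1})^{-1}(\bar{H}^{s})}=F_{\bar{H}^{s}}$ and the formula defining $(F^{\mathrm{can}})^{\prime}$, and it goes through as you anticipate.
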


\begin{proof}
The argument from Proposition \ref{ad-prop} provides a canonical frame (an absolute parallelism)
$(F^{\mathrm{can}})^{\prime}$
on any prolongation $\bar{P}^{(\bar{l}^{\prime})}$ (with $\bar{l}^{\prime}\geq \bar{l}$),
isomorphic to $F^{\mathrm{can}}$ by means of the map $\bar{\pi}^{(\bar{l}^{\prime} , \bar{l}+1)}.$
Let $\bar{l}^{\prime}\geq \bar{l}$ sufficiently large such that
$\pi^{{\bar{l}}^{\prime}}: P^{\bar{l}^{\prime}} \ra \bar{P}^{( \bar{l}^{\prime} -1)}$ is
an $\{ e\}$-structure (recall Definition \ref{pi-n-t}
for $\pi^{\bar{l}^{\prime}}$).
Then, for any $s\geq \bar{l}^{\prime}$, $P^{s} = \tilde{P}^{s} = \bar{P}^{(s)}$ and
$\pi^{s} = \tilde{\pi}^{s} = \bar{\pi}^{(s)}$ is an $\{ e\}$-structure. Any
$\bar{H}^{s}\in \bar{P}^{(s)}$ ($s\geq \bar{l}^{\prime}$)
is a frame $F_{\bar{H}^{s}} : \gm_{\bar{l}} \ra T_{\bar{\pi}^{(s)}(\bar{H}^{{s} })} \bar{P}^{(s-1)}$.
By construction of the prolongations,  the  preimage  $({\pi}^{s+1})^{-1} (\bar{H}^{s})\in P^{s+1}$
is the unique
frame $F_{({\pi}^{s+1})^{-1} (\bar{H}^{s})} : \gm_{\bar{l}} \ra T_{\bar{H}^{s}}\bar{P}^{(s)}$ given by
\begin{equation}\label{pi-prop-fin}
(\bar{\pi}^{(s)})_{*} \circ F_{( {\pi}^{s+1})^{-1} (\bar{H}^{s})}= F_{\bar{H}^{s}},\quad s\geq \bar{l}^{\prime} .
\end{equation}
Consider now  the canonical frame $(F^{\mathrm{can}})^{\prime}$ of $\bar{P}^{(\bar{l}^{\prime})}$.
From the proof of Proposition \ref{ad-prop}, it  is defined by
\begin{equation}\label{pi-prop-fin-1}
(F^{\mathrm{can}})^{\prime}_{\bar{H}^{\bar{l}^{\prime}} } =  (\bar{\pi}^{(k+ \bar{l}^{\prime} ,\bar{l}^{\prime} + 1)})_{*} \circ F_{
(\bar{\pi}^{(k+\bar{l}^{\prime} +1, \bar{l}^{\prime} +1)})^{-1}(\bar{H}^{\bar{l}^{\prime} })}.
\end{equation}
We will show that $(F^{\mathrm{can}})^{\prime}$ is the $\{ e\}$-structure $\pi^{\bar{l}^{\prime} +1}$
of $\bar{P}^{(\bar{l}^{\prime})}.$
From relation  (\ref{pi-prop-fin}), we need to check that
$ (\bar{\pi}^{(\bar{l}^{\prime})})_{*} \circ (F^{\mathrm{can}})^{\prime}_{\bar{H}^{\bar{l}^{\prime}}}
= F_{\bar{H}^{\bar{l}^{\prime}}}$, for any $\bar{H}^{\bar{l}^{\prime}}\in
\bar{P}^{(\bar{l}^{\prime})}$, or, using
relation (\ref{pi-prop-fin-1}), $( \bar{\pi}^{(k+\bar{l}^{\prime}, \bar{l}^{\prime})})_{*} \circ F_{\bar{H}^{k+\bar{l}^{\prime} +1}}
= F_{\bar{\pi}^{(k+\bar{l}^{\prime} +1, \bar{l}^{\prime} +1)}(\bar{H}^{k+ \bar{l}^{\prime}+1})}$,
for any $\bar{H}^{k+ \bar{l}^{\prime} +1}\in \bar{P}^{k+\bar{l}^{\prime} +1}.$
The latter relation  follows easily from (\ref{pi-prop-fin}).
We  obtain that $(F^{\mathrm{can}})^{\prime}$ coincides with the absolute parallelism $\pi^{\bar{l}^{\prime} +1}$
on $\bar{P}^{(\bar{l}^{\prime})}$.
From Proposition \ref{ad-prop-1},
$\mathrm{Aut} (\bar{P}^{(\bar{l}^{\prime})}, (F^{\mathrm{can}})^{\prime})$ (or
 $\mathrm{Aut} (\bar{P}^{(\bar{l})}, F^{\mathrm{can}})$)
is isomorphic to  $\mathrm{Aut} (\mathcal D_{i}, \pi_{G})$.
From Kobayashi theorem   (see
Theorem 3.2 of \cite{kobayashi}, p. 15),  these groups are Lie groups of dimension at most
$\mathrm{dim} (M) + \sum_{i=0}^{\bar{l}  }\mathrm{dim}( \gg^{i})$.\end{proof}

\end{document}